\newtheorem{theorem}{Theorem}
\newtheorem{lemma}{Lemma}
\newtheorem{cor}{Corollary}
\newtheorem{prop}{Proposition}
\newtheorem{assumption}{Assumption}
\newtheorem{definition}{Definition}
\DeclareMathOperator{\Tr}{Tr}
\newtheorem*{remark*}{Remark}
\newtheorem*{thm*}{Theorem}
\newtheorem*{lemma*}{Lemma}
\newtheorem*{prop*}{Proposition}
\newtheorem*{assumption*}{Assumption}
\title{Weak Identification in Peer Effects Estimation}
\author{William W. Wang}
\author{Ali Jadbabaie}
\affil{Massachusetts Institute of Technology}
\date{}
\begin{document}
\maketitle
\begin{abstract}
It is commonly accepted that some phenomena are social: for example, individuals' smoking habits often correlate with those of their peers. Such correlations can have a variety of explanations, such as direct contagion or shared socioeconomic circumstances. The network linear-in-means model is a workhorse statistical model which incorporates these peer effects by including average neighborhood characteristics as regressors. Although the model's parameters are identifiable under mild structural conditions on the network, it remains unclear whether identification ensures reliable estimation in the ``infill'' asymptotic setting, where a single network grows in size. We show that when covariates are i.i.d. and the average network degree of nodes increases with the population size, standard estimators suffer from bias or slow convergence rates due to asymptotic collinearity induced by network averaging. As an alternative, we demonstrate that linear-in-sums models, which are based on aggregate rather than average neighborhood characteristics, do not exhibit such issues as long as the network degrees have some nontrivial variation, a condition satisfied by most network models.
\end{abstract}
\section{Introduction}
The study of peer effects and social interactions has grown immensely in recent years, driven by the increasing availability of social network data. Researchers seek to understand the extent to which interactions between individuals drive correlations in outcomes. For example, smoking behavior has been shown to spread through social networks, with individuals more likely to smoke if their friends do \citep{saari2014smoking}. One possible explanation for this is direct transmission of behavior: individuals may mimic their peers' behaviors due to factors such as social pressure. This is known as an \textit{endogenous} peer effect. However, there are other possible sources of correlation: friends may tend to belong in the same socioeconomic class, and individuals in the same class could behave similarly. This is known as a \textit{contextual} peer effect.

The seminal work of \citet{manski1993identification} formalized the challenge of separately identifying endogenous and contextual peer effects. In his framework, endogenous effects are defined as the influence of average peer outcomes, while contextual effects reflect the influence of average peer baseline characteristics. Manski showed that in this setting, the two effects are not separately identifiable, making it statistically impossible to distinguish the source of observed outcome correlations. He termed this the ``reflection problem.''

While Manski's work focused on dense group interactions where individuals are presumed to be affected by everyone in their group, it has been recognized that social network data can provide more detailed accounts of which interactions are actually taking place. Borrowing from ideas in the spatial econometric literature, \citet{bramoulle2009identification} proposed the following network linear-in-means model, which defines endogenous and contextual peer effects as average outcomes and baseline covariates over an individual's social connections, respectively:
\begin{align*}
    Y=\alpha 1 + X\beta + GX\delta + \rho GY + \varepsilon,
\end{align*}
where $Y\in \mathbb{R}^{n}$ are outcomes, $X\in \mathbb{R}^{n\times p}$ are baseline $p$-dimensional covariates, $G\in \mathbb{R}^{n\times n}$ is a row-normalized adjacency matrix of social interactions, and $\varepsilon\in \mathbb{R}^n$ is an error term. The parameters of interest are $\theta:=(\alpha, \beta, \delta, \rho)$, where $\delta$ and $\rho$ represent contextual and endogenous peer effects, respectively. Since the term $GY$ is endogenous, identification usually requires a valid instrument. The key insight of \citet{bramoulle2009identification} is that higher-order lags of $X$, such as $G^2X$, serve as valid instruments; consequently, if the collection $[1, X, GX,G^2X]$ are not collinear, then the peer effect parameters are identified, making Manski's non-identification result a special case where the network is complete.

Most existing applications of this model assume \textit{strong identification}, where the relevant covariance matrices remain well-conditioned in the limit.  This assumption is often justified by an asymptotic regime in which data are drawn from many small, independent networks, such as school classrooms \citep{sacerdote2011peer} or villages \citep{banerjee2013diffusion, cai2015social}. Such an assumption mirrors the “increasing-domain” asymptotics used in spatial statistics \citep{anselin2022spatial}, and standard asymptotic theory for independent and identically distributed (i.i.d.) data holds by considering each network to be an independent sample.

However, many modern social settings do not fit this framework. For example, online social media platforms typically involve a single network of interconnected users who may have a large number of connections \citep{aral2012information}. In such environments, identification strength may deteriorate as network density increases. This is particularly problematic for the linear-in-means model, where key covariates, such as neighborhood averages, become increasingly similar across individuals in dense networks. In the weak instruments literature, this loss of variation leads to slower convergence rates \citep{antoine2009efficient,hahn2002discontinuities} or non-standard limiting distributions \citep{staiger1994instrumental}.

While weak identification has been observed in simulations of peer effects \citep{bramoulle2009identification, startz2017improved}, the phenomenon remains theoretically underexplored. In this paper, we seek to answer the following question: under what natural conditions does the linear-in-means model suffer from weak identification, and how does this affect standard estimators of peer effects?

Motivated by intuition on neighborhood averages, we analyze peer effects estimation under two simple assumptions: that network density increases at any rate, and that covariates are i.i.d. We show that these two assumptions are sufficient for weak identification to occur. Specifically, we prove that ordinary least squares is generally biased, in contrast to prior results showing consistency under strong identification \citep{lee2002consistency}. On the other hand, we establish that two-stage least squares estimation is consistent for networks with density $o(\sqrt{n})$, although the estimation rate degrades as network density increases. This non-standard asymptotic behavior occurs as a result of vanishing signal in network averages and asymptotic collinearity of the design matrix.

These results naturally motivate the search for alternative models that are less vulnerable to weak identification. We consider the related linear-in-sums model, which replaces the row-normalized adjacency matrix $G$ with the unnormalized matrix $A$. We demonstrate that this model generally avoids the identification issues seen in the linear-in-means model. In particular, for random graphs, we show that as long as the underlying network model has sufficient structural variation, the model is strongly identified, and the two-stage least squares estimator converges at a standard rate. Linear-in-sums models have been applied more recently in the literature and can be interpreted as local aggregation instead of averaging \citep{ballester2006s,liu2014endogenous}. We argue that this model can be a useful alternative in discerning endogenous and contextual effects.

This paper is organized as follows: in Section \ref{sec:wi-related}, we survey some related literature on the linear-in-means model. Section \ref{sec:wi-framework} formally introduces the network linear-in-means model along with our asymptotic framework and baseline assumptions. In Section \ref{sec:wi-lim}, we state and prove our main results on weak identification in the linear-in-means model. In Section \ref{sec:wi-lis}, we show generic strong identification for the linear-in-sums model. We provide Monte Carlo simulations in Section \ref{sec:wi-simulations} and conclude in Section \ref{sec:wi-conclusion}. All omitted proofs can be found in the \hyperref[appn]{Appendix}.

\subsection{Notation}
Throughout this paper, we use the standard probability notation, such as $O_P(\cdot)$, $\Omega_P(\cdot)$, etc. We write $f(n)\ll g(n)$ if $f = o(g)$, as well as $f(n)\lesssim g(n)$ if $f(n)=O(g(n))$. We also write $\|\cdot\|_F$ for the Frobenius norm and $\|\cdot\|_2$ for the spectral norm on matrices. We further use the notation $a\vee b=\max(a,b)$ and $a\wedge b=\min(a,b)$.
\section{Related Work}\label{sec:wi-related}
\citet{bramoulle2009identification} proposed to solve the reflection problem using the network linear-in-means model. They show that in the many-group setting, endogenous and contextual effects are identified if the networks satisfy a linear independence constraint that $I, G, G^2$ are independent in the population, where $G$ is the row-normalized adjacency matrix. This identification strategy has roots in the spatial statistics literature: the network linear-in-means model is therein known as the Spatial Durbin model \citep{anselin2022spatial}, and the two-stage least-squares approach for estimating this model is due to \citet{kelejian1998generalized}.

In their simulations, \citet{bramoulle2009identification} also recognized the potential problem of weak instruments in large, dense network settings. While this issue is not widely acknowledged in the peer effects literature, it has been explored by a few authors. In particular, \citet{lee2004asymptotic,lee2007method} analyzed the properties of quasi-MLE and GMM for the linear-in-means model without contextual effects. The author recognized a case where asymptotic collinearity occurs, namely the setting of large group interactions where group size increases with the population size. In this setting, the author shows the same estimation rate $\sqrt{n/h_n}$, where $1/h_n$ is the growth rate of the entries of $G$. The rate we show under asymptotic collinearity is the same, though in our context, the collinearity is driven by local averaging instead of group interactions.

\citet{startz2017improved} also acknowledge the weak instruments problem and propose an alternative estimator based on approximating the series expansion of the endogenous term. While their estimator outperforms standard two-stage least-squares in simulation, they do not show formal guarantees.

\citet{lee2002consistency} considers estimation of spatial autoregressive models using OLS in a setup with only $X$ and $GY$ in the regression, where $X$ is presumed fixed. While there is endogeneity under this setup, the endogeneity is shown to be vanishing in magnitude if the network is dense enough, and thus OLS can be consistent at a slower rate, with error on the order of $O\left(\sqrt\frac{d}{n}\right)$. Our work contrasts with this by showing that if endogenous and contextual effects are both included in the model, then the OLS estimator is not consistent in general.

The regularity conditions in our work require the network degrees to be near-regular, a property which is satisfied in many statistical models of dense networks \citep{frieze2015introduction}. On the other hand, \citet{lee2023qml} consider estimation of spatial autoregressive models when there is a bounded amount of degree heterogeneity. They employ a novel central limit theorem to show that estimates of the model parameters are still typically consistent and asymptotically normal at a $\sqrt{n}$ rate, so long as the fraction of ``dominant units'' is bounded. The results in our current work could extend to such settings. However, it is possible that estimators would have non-standard asymptotic distributions when the regressors are considered random, since the limiting variance depends on the specific $X$ value of the dominant units.

In concurrent work, \citet{hayes2024peer} also derive results on the asymptotic behavior of OLS for the full linear-in-means model with endogenous and contextual effects and random regressors. Under an additional subgamma condition on the covariates and error terms, they show that if the degree of the graph is $\Omega(\log n)$ then $GX$ and $GY$ converge uniformly to constants, and they further show that the OLS estimation rates for the linearly dependent terms of the model depend on $\Tr(G^TG)$. Our results for OLS agree with and refine some of their findings, and we are able to provide a more specific description of the reason for inconsistency (i.e., by decomposing the error into a sum of an asymptotically normal term plus bias). Additionally, we are able to provide new results for two-stage least squares. Our proof method is somewhat conceptually simpler than that of Hayes and Levin: while their work produces an approximate singular value decomposition, we directly compute the matrix inverse, which is tractable by invoking the Frisch-Waugh-Lovell theorem. By carefully considering the scaling of the moments as the network structure varies, we can derive estimation rates.

The linear-in-sums model has become more popular in recent years. \citet{liu2014endogenous} formulate a model which incorporates both endogenous average and aggregate components, which they estimate jointly. Linear-in-sums models have seen applications in treatment effect estimation~\citep{cai2015social,chandrasekhar2011econometrics} and in network causal inference as a parametric model of interference \citep{chin2019regression}. Our work provides some additional support in favor of these models by arguing that they generically do not suffer from weak identification.
\section{The Network Linear-in-Means Model}\label{sec:wi-framework}
\subsection{Asymptotic Framework}
For the peer effects model we describe, the observed data includes nodewise covariates $X_i\in \mathbb{R}^p$ where $p$ is the number of covariates, as well as nodewise outcomes $Y_i\in \mathbb{R}$ for each of $n$ individuals. We collect these individual covariates and outcomes into a matrix $X\in \mathbb{R}^{n\times p}$ and vector $Y\in \mathbb{R}^n$ respectively. Furthermore, we observe a binary symmetric adjacency matrix of social interactions, $A\in [0,1]^{n\times n}$. The asymptotics in this work are taken with respect to the population size, $n$: that is, if $X^{(n)}$, $Y^{(n)}$, and $A^{(n)}$ are the covariates, outcomes, and adjacency matrix for a population of size $n$, then we observe a sequence $(X^{(n)},Y^{(n)},A^{(n)})_{n\in \mathbb{N}}$, and asymptotics are taken in reference to this sequence. We will generally omit the superscript for clarity.
\subsection{Linear-in-Means Model}\label{section:network-asymptotics}
For an adjacency matrix $A$, we denote $D$ to be the diagonal matrix where the $i$-th entry on the diagonal is $d_i:=\sum_{j=1}^n A_{ij}=(A1)_i$, where $1$ is the $n$-dimensional vector of ones. We then define $G=D^{-1}A$ to be the row-normalized version of $A$. To ensure this is well-defined if $d_i=0$, we re-define $d_i^{-1}:=1$ if $d_i=0$,

With this notation, the classical linear-in-means model \citep{bramoulle2009identification} is defined as follows:
\begin{align*}
    Y=\alpha 1 + X\beta + GX\delta + \rho GY + \varepsilon,
\end{align*}
where $\varepsilon\in \mathbb{R}^n$ is an unobserved error term and $(\alpha,\beta,\delta,\rho)$ are parameters of the model. $\alpha\in \mathbb{R}$ and $\beta\in \mathbb{R}^p$ are the parameters of a standard linear model, while $\delta\in \mathbb{R}^p$ and $\rho\in \mathbb{R}$ are network effect parameters modeling contextual and endogenous peer effects, respectively. In this paper, we will take $p=1$ so that $\delta$ is a scalar, and we will primarily be interested in estimation of the network parameters $\delta$ and $\rho$, rather than on estimation of the coefficients $\alpha,\beta$.

To simplify notation, we make the following definitions for instruments $Z$ and endogenous variables $W$, respectively:

\begin{align*}
    Z&:=(1,X,GX,G^2X),\\
    W&:=(1,X,GX,GY).
\end{align*}
If $\theta:= (\alpha,\beta,\delta,\rho)$, then we can rewrite the peer effects model as
\begin{align*}
    Y=\theta W+\varepsilon.
\end{align*}
The OLS estimator is defined as
$$\hat{\theta}_{OLS} := (W^TW)^{-1}(W)^TY,$$
and the two-stage least squares estimator is then
$$\hat{\theta}_{2SLS} := ((P_ZW)^TP_ZW)^{-1}(P_ZW)^TY,$$
where $P_Z=Z(Z^TZ)^{-1}Z^T$ is the orthogonal projection onto the column space of $Z$. Since we are working in the just-identified regime, this is equivalent to the IV estimator,
$$\hat{\theta}_{2SLS} := (Z^TW)^{-1}Z^TY.$$
We will also introduce the following notation for some important network-related quantities:
\begin{align}\label{def:SH}
    S=S(\rho):=(I-\rho G),\quad H=H(\theta):=(\beta I+\delta G)S^{-1}.
\end{align}
In this notation,
\begin{equation}\label{eq:rf-y}
    Y=\alpha S^{-1}1+HX+S^{-1}\varepsilon.
\end{equation}
We now describe our key assumptions on $X,\varepsilon$, and the network.
\subsection{Regularity Assumptions}
Throughout our analysis, we impose the following regularity conditions, which are standard in the literature \citep{kelejian1998generalized}:
\begin{assumption}[Error terms]\label{assumption:errors}
    The $\varepsilon_i$ are exogenous (i.e., $E[\varepsilon_i\mid A,X]=0$), homoskedastic with variance $\sigma_\varepsilon^2$, and have bounded fourth moment.
\end{assumption}
\begin{assumption}[Regularity of $G$]\label{assumption:network-reg}
    The row-normalized adjacency matrix $G$ as well as $S(\rho)^{-1}$ have uniformly bounded column sums, i.e., 
    $$\max_i (G^T1)_i=O(1),$$
    and similarly for $S(\rho)^{-1}$.
\end{assumption}
\begin{assumption}[Convergence of appropriate moments]\label{assumption:moments}
    $\frac{1}{n}Z^TZ\rightarrow_P \Gamma_{ZZ}$, $\frac{1}{n}Z^TW\rightarrow_P \Gamma_{ZW}$, and $\frac{1}{n}W^TW\rightarrow_P \Gamma_{WW}$ for some matrices $\Gamma_{ZZ}$, $\Gamma_{ZW}$, and $\Gamma_{WW}$. Additionally, $Z^TZ$, $Z^TW$, and $W^TW$ are invertible with probability 1 for each $n$.
\end{assumption}
The homoskedasticity assumption on $\varepsilon$ is for convenience but can be relaxed. The assumption that $G$ and $S(\rho)^{-1}$ have bounded column sums is used to ensure the law of large numbers holds and is almost always enforced in the literature, though there is some recent work which allows it to be relaxed \citep{lee2023qml}. Some assumption on the growth of the column sums of these matrices is necessary to avoid ``concentration of influence'' phenomena, where moments such as $\frac{1}{n}1^TGX$ would be dominated by the value of a single well-connected dominant unit; see, relatedly, discussions of the implications of this phenomenon for network economies and governance in \citet{acemoglu2012network, halpern2021defense}.

Assumption \ref{assumption:moments} is used to argue that appropriate sample moments converge; however, contrary to much existing literature, we do \textit{not} make the common assumption that the relevant limiting moment matrices, such as $\Gamma_{ZW}$, are invertible. 
\subsection{Network Conditions}
Our main assumption for the network sequence $(A^{(n)})_{n\in \mathbb{N}}$ is the following near-degree regularity condition:
\begin{assumption}[Near-Degree Regularity]\label{assumption:regular}
    There exist constants $c_1,c_2$ and a scaling sequence $d=d(n)$ so that for all $n$ sufficiently large,
    $$c_1d(n)\leq d_i^{(n)}\leq c_2d(n),$$
    uniformly for all $i=1,...,n$, where $d_i^{(n)}$ is the degree of node $i$ in $A^{(n)}$, that is, $d_i^{(n)}=\sum_{j=1}^n A_{ij}^{(n)}$. Furthermore, $1 \ll d \ll n$.
\end{assumption}
This assumption is strictly stronger than Assumption \ref{assumption:network-reg}. For example, Assumption \ref{assumption:network-reg} allows networks where nodes of degree one are attached to a clique of growing size, while our Assumption \ref{assumption:regular} would require that these low-degree nodes match the degree of the clique asymptotically. Nevertheless, it is well-known to be satisfied by a number of graph models, such as random graphs with degree $\omega(1)$ \citep{frieze2015introduction}, and is also assumed in some existing analyses of spatial autoregressive models \citep{lee2002consistency}.

An important consequence of Assumption \ref{assumption:regular} are the following bounds, which will govern the rates of convergence for the estimators we analyze:
\begin{prop}\label{prop:rates}
    Under Assumption \ref{assumption:regular}, $\|G\|_F^2=\Theta\left(\frac{n}{d}\right)$. In addition, $\frac{n}{d^2}\lesssim \|G^2\|_F^2\lesssim \frac{n}{d}$. 
\end{prop}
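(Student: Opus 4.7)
The plan is to compute both Frobenius norms entrywise and leverage the fact that the degree regularity $d_i \in [c_1 d, c_2 d]$ turns every sum of the form $\sum_i f(d_i)$ into $\Theta(n \cdot f(d))$. For $\|G\|_F^2$, I would use that $A$ is $\{0,1\}$-valued to write $G_{ij}^2 = A_{ij}/d_i^2$, so that
\begin{equation*}
\|G\|_F^2 = \sum_{i,j} \frac{A_{ij}}{d_i^2} = \sum_i \frac{d_i}{d_i^2} = \sum_i \frac{1}{d_i}.
\end{equation*}
Assumption \ref{assumption:regular} then gives $\|G\|_F^2 \in [n/(c_2 d), n/(c_1 d)] = \Theta(n/d)$.

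For the upper bound on $\|G^2\|_F^2$, I would use submultiplicativity of the Frobenius norm, $\|G^2\|_F \leq \|G\|_2 \|G\|_F$, and argue that $\|G\|_2 = O(1)$. Row-stochasticity immediately gives $\|G\|_\infty = 1$. For the column sums, symmetry of $A$ and degree regularity yield $\sum_i G_{ij} = \sum_{i\sim j} 1/d_i = \Theta(d_j/d) = O(1)$, so $\|G\|_1 = O(1)$ as well; Riesz--Thorin (or the elementary bound $\|G\|_2 \leq \sqrt{\|G\|_1 \|G\|_\infty}$) then gives $\|G\|_2 = O(1)$, whence $\|G^2\|_F^2 \lesssim n/d$.

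For the lower bound, I would discard the off-diagonal contribution and estimate the diagonal:
\begin{equation*}
\|G^2\|_F^2 \geq \sum_i (G^2)_{ii}^2, \qquad (G^2)_{ii} = \sum_k G_{ik}G_{ki} = \frac{1}{d_i}\sum_{k \sim i} \frac{1}{d_k},
\end{equation*}
where I use $A_{ik}A_{ki} = A_{ik}$. Under Assumption \ref{assumption:regular} the inner sum has $d_i = \Theta(d)$ terms each of size $\Theta(1/d)$, so $(G^2)_{ii} = \Theta(1/d)$, and squaring/summing gives $\sum_i (G^2)_{ii}^2 = \Omega(n/d^2)$.

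Overall the proposition is a routine calculation; there is no serious obstacle. The one point that requires slight care is the $\|G\|_2 = O(1)$ step, since row-stochasticity alone yields only $\|G\|_\infty = 1$ and does not control the spectral norm --- invoking near-degree regularity to bound the column sums is what makes the interpolation argument go through, and this is precisely the reason that Assumption \ref{assumption:regular} is stronger than Assumption \ref{assumption:network-reg} for this purpose.
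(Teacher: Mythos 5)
Your proposal is correct, and its overall shape matches the paper's argument; the differences are worth one remark each. The computation of $\|G\|_F^2=\sum_i 1/d_i=\Theta(n/d)$ is identical to the paper's. For the lower bound on $\|G^2\|_F^2$, the paper interprets $\Tr((G^2)^TG^2)$ as a sum over closed $4$-walks and counts the $\Omega(nd^2)$ degenerate walks of the form $i\rightarrow j\rightarrow k\rightarrow j\rightarrow i$, each weighted $\Theta(1/d^4)$; your restriction to the diagonal, $\|G^2\|_F^2\geq \sum_i (G^2)_{ii}^2$ with $(G^2)_{ii}=\frac{1}{d_i}\sum_{k\sim i}\frac{1}{d_k}=\Theta(1/d)$, is the same counting idea specialized to the walks $i\rightarrow k\rightarrow i\rightarrow k'\rightarrow i$, and it has the small advantage of being an exact, purely entrywise computation rather than a motif count. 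For the upper bound $\|G^2\|_F^2\lesssim n/d$, the paper's proof of the proposition does not spell out an argument (it relies on the bound $\|G^2\|_F\leq \|G\|_F\|G\|_2$ together with its Lemma \ref{lemma:col-sums}, which gets $\|G\|_2=O(1)$ from bounded row and column sums via Gershgorin applied to $G^TG$); your route through $\|G\|_2\leq\sqrt{\|G\|_1\|G\|_\infty}$ with $\|G\|_\infty=1$ and column sums $\Theta(d_j/d)=O(1)$ under near-degree regularity is an equally valid way to obtain the same spectral-norm bound, and you are right that controlling the column sums (not just row-stochasticity) is the essential ingredient. No gaps.
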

The last assumption we will use for our analysis is an i.i.d. assumption on the covariates: 
\begin{assumption}[IID covariates]\label{assumption:iid}
    The covariates $X_i$ are univariate independent and identically distributed random variables with mean $\mu$, variance $\sigma^2$, bounded fourth moment, and are independent of $A$.
\end{assumption}
This assumption appears benign in general and would hold if $X_i$ were a randomly administered treatment in a network experiment  \citep{cai2015social,banerjee2013diffusion}. However, when these two assumptions hold, $\Gamma_{ZW}$ is provably low-rank:
\begin{prop}\label{prop:low-rank}
    Under Assumptions \ref{assumption:regular} and \ref{assumption:iid}, $\Gamma_{ZW}$ has the rank-two structure $ab^T+E$, where $a=(1,\mu,\mu,\mu)^T$, $b=(1,\mu,\mu,\mu')^T$, $\mu'=\frac{\alpha+\mu(\beta+\delta)}{1-\rho}$, and $E=\mathrm{diag}(0,\sigma^2,0,0)$.
\end{prop}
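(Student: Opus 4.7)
The plan is to identify $\Gamma_{ZW}$ entry by entry. Since Assumption~\ref{assumption:moments} already guarantees existence of the probability limit, it suffices to compute $\lim \frac{1}{n}E[Z^TW\mid A]$ and verify separately, via Chebyshev and the bounded moments in Assumptions~\ref{assumption:errors} and~\ref{assumption:iid}, that the sampling variation around the conditional mean is $o_P(1)$. The starting observation is that Assumption~\ref{assumption:regular} forces every degree to be positive for $n$ large, so $G\mathbf{1}=\mathbf{1}$; this immediately yields $S^{-1}\mathbf{1}=(1-\rho)^{-1}\mathbf{1}$, $H\mathbf{1}=\frac{\beta+\delta}{1-\rho}\mathbf{1}$, and $GH\mathbf{1}=\frac{\beta+\delta}{1-\rho}\mathbf{1}$. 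Substituting into the reduced form \eqref{eq:rf-y} gives
\[
GY \;=\; \frac{\alpha}{1-\rho}\mathbf{1} \;+\; GHX \;+\; GS^{-1}\varepsilon,
\]
so $E[GY\mid A,X]=\frac{\alpha}{1-\rho}\mathbf{1}+GHX$, and in particular $\frac{1}{n}\mathbf{1}^T E[GY\mid A]\to\mu'$.

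Next I would work through the sixteen entries of $\frac{1}{n}Z^TW$. Using $E[X_iX_j]=\mu^2+\sigma^2\ind\{i=j\}$, each quadratic form in $X$ satisfies
\[
E[X^T M X \mid A] \;=\; \mu^2\,\mathbf{1}^T M\mathbf{1} \;+\; \sigma^2\,\Tr(M),
\]
while linear-in-$X$ terms produce $\frac{\mu}{n}\mathbf{1}^T M\mathbf{1}$ directly. The first piece contributes the rank-one $ab^T$ structure: for every $M$ appearing---powers of $G$ and $G^T$, optionally composed on the right with $H$ from the $GY$ decomposition---repeated application of $G\mathbf{1}=\mathbf{1}$ collapses $\mathbf{1}^T M\mathbf{1}$ to exactly $n$ when $M$ is a product of $G$'s and $G^T$'s, and to $\frac{\beta+\delta}{1-\rho}n$ when $M$ ends in $H$. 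Combined with the limit for $GY$ above, this reproduces precisely the entries $a_ib_j$ with $a=(1,\mu,\mu,\mu)^T$ and $b=(1,\mu,\mu,\mu')^T$. The cross-terms arising from the $GS^{-1}\varepsilon$ piece of $GY$ are mean zero by exogeneity and concentrate to $0$ via a variance calculation using Assumption~\ref{assumption:errors} together with $\|S^{-1}\|_2=O(1)$.

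The diagonal piece $\frac{\sigma^2}{n}\Tr(M)$ is what produces $E$: it survives only when $M=I$, which occurs uniquely at the $(2,2)$ entry and gives $\sigma^2$. For every other $M$ appearing, $M$ contains at least one factor of $G$ or $G^T$, and I would bound $|\Tr(M)|$ by choosing a factorization $M=AB$ and applying $|\Tr(AB)|\le\|A\|_F\|B\|_F$. For the purely-$G$ cases, this yields $|\Tr(M)|=O(\|G\|_F^2)=O(n/d)$ via Proposition~\ref{prop:rates}. For the entries involving $GY$, where $M$ contains a factor of $H$, I would pair the bound $\|H\|_F=O(\sqrt{n})$ (from $\|S^{-1}\|_2=O(1)$) against $\|G\|_F$ or $\|G^2\|_F=O(\sqrt{n/d})$ to obtain $O(n/\sqrt{d})=o(n)$. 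I expect this trace bookkeeping to be the main obstacle: one has to check, for each of the sixteen entries, that the associated product admits such a factorization with at least one $G$-factor carrying the Frobenius bound from Proposition~\ref{prop:rates}, so that the $\sigma^2$ correction is suppressed everywhere except at $(2,2)$. With this in hand, Assumption~\ref{assumption:moments} pins down the probability limit as $ab^T+\diag(0,\sigma^2,0,0)$.
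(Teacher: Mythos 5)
Your proposal is correct and follows essentially the same route as the paper's proof: a direct entrywise computation of the limiting moment matrix $\frac{1}{n}Z^TW$, exploiting $G1=1$ (hence the reduced form $GY=\frac{\alpha}{1-\rho}1+GHX+GS^{-1}\varepsilon$) for the rank-one mean structure, and noting that the $\sigma^2\Tr(M)/n$ corrections vanish for every entry except $\frac{1}{n}X^TX$ since all relevant traces are $o(n)$ when $d\to\infty$. Your write-up is simply a more detailed version of the paper's terse argument, with the trace bounds and the concentration of the $\varepsilon$-terms spelled out explicitly.
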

The proposition holds via simple computation of the limiting moment matrix. The rank-deficient limit is caused by asymptotic collinearity between $1$, $GX$, and $GY$. Intuitively, as the neighborhood of each individual is denser, more of the variation in $X$ which is useful for identification gets averaged out, so that in the limit, $GX$ approaches a constant. \citet{hayes2024peer} provide a uniform convergence result of this type.

While standard arguments for least-squares consistency cannot be applied directly due to the rank deficiency, it is not immediately clear that standard estimators are inconsistent. Indeed, in similar non-network settings where an invertible covariance matrix has a singular limit, standard estimators are often consistent but at a slower rate \citep{knight2008shrinkage,phillips2016inference}. In the following section, we analyze both OLS and 2SLS estimators of $\theta$ and show that while OLS is generally inconsistent, 2SLS can be consistent provided the network is not too dense.
\section{Weak Identification in Linear-in-Means}\label{sec:wi-lim}
In this section, we analyze the behavior of the OLS and 2SLS estimators for the linear-in-means model. We maintain the assumptions defined in the previous section. In addition, our results will require the following mild regularity assumption on the sequence of network moments:
\begin{assumption}\label{assumption:network-mom}
     For $A$ and $B$ chosen from the set $\{I,S^{-1},H\}$ where $S$ and $H$ are defined in (\ref{def:SH}), the limit $\lim\limits_{n\rightarrow\infty} \frac{1}{\|G\|_F^2}\Tr((GA)^TGB)=:m_{GA,GB}$ exists. Additionally, $m_{I,GS^{-1}}:=\lim\limits_{n\rightarrow\infty} \frac{1}{\|G\|_F^2}\Tr(GS^{-1})$ exists.
\end{assumption}
The term $\Tr((GA)^TGB)$ is a signal term which arises from the expected covariance matrix. It is $O\left(\frac{n}{d}\right)$, which can be seen from the following:
\begin{align*}
    \Tr((GA)^TGB)&\leq \|GA\|_F\|GB\|_F\\
    &\leq \|G\|_F\|A\|_2\|G\|_F\|B\|_2\lesssim \|G\|_F^2=O\left(\frac{n}{d}\right),
\end{align*}
using Cauchy-Schwarz, the inequality $\|AB\|_F\leq \|A\|_F\|B\|_2$, Lemma \ref{lemma:col-sums} in the \hyperref[appn]{Appendix}, and Proposition \ref{prop:rates} under our near-degree regularity assumption. In many cases which we describe below, it is also $\Omega(\frac{n}{d})$, so $\frac{1}{\|G\|_F^2}\Tr((GA)^TGB)=\Theta(1)$. In this case, this assumption is simply asserting that this boundedness translates to a deterministic limit. 

Since $\|G\|_F^2=\Theta(\frac{n}{d})$, we can alternatively assume that $\frac{d}{n}\|G\|_F^2=\frac{d}{n}\sum_{j=1}^n \frac{1}{d_j}$ converges, and that $\frac{d}{n}\Tr((GA)^TGB)$ converges. In this case, since $\frac{1}{n}\sum_{j=1}^n \frac{1}{d_j}$ is the reciprocal of the harmonic mean of degree sequence, the assumption that $\frac{d}{n}\|G\|_F^2$ converges is equivalent to a natural assumption that the harmonic mean is asymptotically equivalent to a constant factor times $d$. For example, it excludes sequences such as Erd\H{o}s--R\'enyi random graphs with a randomly chosen connection probability for each $n$. We choose to state the assumption using normalization by $\|G\|_F^2$ instead of the network-independent quantity $\frac{n}{d}$ since it will generalize more easily to the analysis of two-stage least squares, which has a rate that depends on the structure of the network beyond the first-order degrees.

In general, since the terms $\Tr((GA)^TGB)$ are upper bounded in magnitude by $O(\frac{n}{d})$, Assumption \ref{assumption:network-mom} is a mild condition which requires that this upper bound translates to deterministic limits. Depending on the parameter values, some of these limits could be zero. For example, if $\beta = 0$, then 
\begin{align*}
    \Tr(G^TGH)&=\Tr(\delta G^TG^2S^{-1})\leq |\delta|\|G^2\|_F\|G^TS^{-1}\|_F\\
    &\leq |\delta|\|G^2\|_F\|G\|_F\|S^{-1}\|_2\\
    &\lesssim \|G^2\|_F\|G\|_F.
\end{align*}
In random graphs, $\|G^2\|_F=O_P(1\vee \frac{\sqrt{n}}{d})$ and $\|G\|_F=\Theta_P(\sqrt\frac{n}{d})$, so $\Tr(G^TGH)=o_P(\frac{n}{d})$ and thus $m_{G,GH}=0$. On the other hand, the following lemma records which limits are guaranteed to be nonzero, as well as their sign:

\begin{lemma}\label{lemma:nonzero-moments}
    $m_{G,G}$, $m_{G,GS^{-1}}$, $m_{GS^{-1},GS^{-1}}$, and $m_{I,GS^{-1}}$ are nonzero. Additionally, if $\beta\neq 0$, $m_{G,GH}$ and $m_{GH,GH}$ are nonzero.
\end{lemma}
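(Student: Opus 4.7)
The plan is to verify each of the listed limits in turn, using three ingredients: the Neumann expansion $S^{-1}=\sum_{k\ge 0}\rho^k G^k$ (convergent because $\|\rho G\|_2=O(|\rho|)$ by row-stochasticity of $G$ and Lemma~\ref{lemma:col-sums}), non-negativity of every entry of $G$ and its iterates, and the factorization $G=(GS^{-1})\cdot S$ combined with $\|S\|_2=O(1)$.

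I would begin with the straightforward cases. The identity $m_{G,G}=1$ follows immediately from $\Tr(G^T G)=\|G\|_F^2$. For $m_{GS^{-1},GS^{-1}}$, the factorization $G=(GS^{-1})S$ and Frobenius submultiplicativity give $\|G\|_F\le \|GS^{-1}\|_F\,\|S\|_2$, and since $\|S\|_2\le 1+|\rho|\,\|G\|_2=O(1)$, the ratio $\|GS^{-1}\|_F^2/\|G\|_F^2$ is bounded below by $1/\|S\|_2^2>0$. For $m_{G,GS^{-1}}$ and $m_{I,GS^{-1}}$, I would invoke the Neumann expansion, writing $\Tr(G^T GS^{-1})=\|G\|_F^2+\sum_{k\ge 1}\rho^k \Tr(G^T G^{k+1})$ and $\Tr(GS^{-1})=\rho\Tr(G^2)+\sum_{k\ge 2}\rho^k\Tr(G^k)$ (using $\Tr(G)=0$ for a simple adjacency). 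Because $G^k$ has non-negative entries, $\Tr(G^T G^{k+1})=\sum_{i,j}G_{ij}(G^{k+1})_{ij}\ge 0$ and $\Tr(G^k)\ge 0$, so in the natural peer-effects range $\rho\in[0,1)$ the tail terms only contribute non-negatively. The leading $\|G\|_F^2$ term then gives $m_{G,GS^{-1}}\ge 1$, while $\Tr(G^2)=\sum_{i\ne j}A_{ij}/(d_id_j)=\Theta(n/d)=\Theta(\|G\|_F^2)$ under Assumption~\ref{assumption:regular} yields $m_{I,GS^{-1}}\gtrsim \rho$.

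For the $H$-moments under $\beta\ne 0$, I would substitute $GH=(\beta G+\delta G^2)S^{-1}$ and expand
\begin{align*}
m_{G,GH}&=\beta\,m_{G,GS^{-1}}+\delta\,m_{G,G^2S^{-1}},\\
m_{GH,GH}&=\beta^2\,m_{GS^{-1},GS^{-1}}+2\beta\delta\,m_{GS^{-1},G^2S^{-1}}+\delta^2\,m_{G^2S^{-1},G^2S^{-1}}.
\end{align*}
The goal is to show that every contribution carrying a factor of $G^2$ vanishes in the limit, so that only the surviving $\beta$-terms matter and are nonzero by the earlier steps. Cauchy--Schwarz together with $\|S^{-1}\|_2=O(1)$ controls these contributions by a constant multiple of $\|G\|_F\,\|G^2\|_F$ or $\|G^2\|_F^2$; the sharper estimate $\|G^2\|_F^2=O(1\vee n/d^2)=o(n/d)$ under $d=\omega(1)$ — which the paper already uses just above the lemma to conclude $m_{G,GH}=0$ when $\beta=0$ — then drives these contributions to $o(\|G\|_F^2)$.

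The main obstacle is precisely this $\|G^2\|_F^2=o(n/d)$ estimate: Proposition~\ref{prop:rates} alone gives only the matching $O(n/d)$ upper bound, so deterministic near-regular graphs with abundant short cycles (for instance a disjoint union of equal-size cliques, where $G^2\approx G$) leave the $\beta$- and $\delta$-terms on equal footing and admit pathological cancellations like $\delta=-\beta$ that would force $m_{G,GH}\to 0$. The clean statement implicitly relies on the mild expansion possessed by the random graph models the paper targets, which would either be imposed as an auxiliary hypothesis in the full proof or verified directly in each specific network model.
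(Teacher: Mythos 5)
Your treatment of $m_{G,G}$, $m_{G,GS^{-1}}$, $m_{GS^{-1},GS^{-1}}$, and $m_{I,GS^{-1}}$ is correct and essentially the paper's argument (the paper gets $\|GS^{-1}\|_F^2\ge \Tr(G^TG)$ directly from entrywise non-negativity and $\rho\ge 0$; your alternative via $\|G\|_F\le \|GS^{-1}\|_F\|S\|_2$ with $\|S\|_2=O(1)$ is also fine and even works without the sign of $\rho$). Like the paper, your bound $m_{I,GS^{-1}}\gtrsim \rho$ carries the implicit caveat $\rho>0$, which is how the lemma is used downstream.

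The gap is in the $\beta\neq 0$ half. You try to prove $m_{G,GH},m_{GH,GH}\neq 0$ by showing the $\delta$-contributions (those carrying $G^2$) vanish, which forces you to invoke $\|G^2\|_F^2=o(n/d)$ — an estimate that, as you yourself note, is false for near-regular graphs such as clique unions — and you then conclude the lemma needs an auxiliary expansion hypothesis. That is not the intended argument and the extra hypothesis is unnecessary. The paper's proof never asks the $G^2$ terms to be small: writing $GH=\beta GS^{-1}+\delta G^2S^{-1}$, every trace in $\Tr(G^TGH)=\beta\Tr(G^TGS^{-1})+\delta\Tr(G^TG^2S^{-1})$ and in $\Tr((GH)^TGH)=\beta^2\Tr((GS^{-1})^TGS^{-1})+2\beta\delta\Tr((GS^{-1})^TG^2S^{-1})+\delta^2\Tr((G^2S^{-1})^TG^2S^{-1})$ is a sum of products of entrywise non-negative matrices, hence non-negative; under the paper's maintained positive-peer-effect convention (the same $\rho\ge 0$ hypothesis as in Lemma \ref{lemma:lln-satisfied}, with $\delta\ge 0$, resp.\ $\beta\delta\ge 0$), the whole expression is therefore bounded below by $\beta\Tr(G^TG)$, resp.\ $\beta^2\Tr(G^TG)$, which is $\Theta(n/d)=\Theta(\|G\|_F^2)$. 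So clique unions are not a counterexample within that parameter range, and no anti-clustering condition on the graph is needed. Your observation that arbitrary signs of $\beta,\delta$ would permit cancellations (e.g.\ $\delta=-\beta$ on cliques) is a fair criticism of the lemma's literal statement, but the resolution is the sign restriction already implicit in the paper (cf.\ the appendix on extending to negative peer effects), not the graph-expansion hypothesis you propose — so as written your proposal fails to establish the second half of the lemma as the paper intends it.
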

\subsection{Ordinary Least Squares Estimation of Linear-in-Means}
We now analyze the behavior of the OLS estimator of the network effects. Recall that this estimator is defined as 
\begin{equation*}
    \hat\theta_{OLS}=(W^TW)^{-1}W^TY.
\end{equation*}

Since we are primarily interested in the network effects $\delta$ and $\rho$, we partial out the intercept and $X$, which allows us to concentrate our analysis on just two parameters. Specifically, denote $W_1=(1,X)$, $W_2=(GX,GY)$, $P_1=W_1(W_1^TW_1)^{-1}W_1^T$, $M=I-P_1$, $\tilde{W}_2=MW_2$, $\theta_1=(\alpha, \beta)$, $\theta_2=(\delta, \rho)$ and $\hat\theta_2=(\hat{\delta}_{OLS},\hat{\rho}_{OLS})$. Then by the Frisch-Waugh-Lovell theorem \citep{ding2024linear}, we can write
\[
\hat{\theta}_2 -\theta_2 = (\tilde{W}_2^T\tilde{W}_2)^{-1}\tilde{W}_2^T\varepsilon.
\]
To formally state our main result, we introduce the following quantities involving network moments which are well-defined using Assumption \ref{assumption:network-mom}:
\begin{align}\label{eq:covariance-ols}
    \tilde\Gamma_{WW}&=\begin{pmatrix}
        \sigma^2 m_{G,G}&\sigma^2 m_{G,GH}\\
        \cdot &\sigma^2 m_{GH,GH}+\sigma_\varepsilon^2m_{GS^{-1},GS^{-1}}
    \end{pmatrix},\\
    \tilde\Sigma_{WW}&=\sigma_\varepsilon^2\begin{pmatrix}
        \sigma^2 m_{G,G}&\sigma^2 m_{G,GH}\\
        \cdot &\sigma^2 m_{GH,GH}
    \end{pmatrix},
\end{align}
and $\bar\kappa:=\det(\tilde\Gamma_{WW}).$ With this notation defined, we have the following theorem, which decomposes the regression error into the sum of an asymptotically normal term and a residual bias term:
\begin{theorem}\label{thm:ols-formal}
    Under assumptions \ref{assumption:errors}--\ref{assumption:network-mom},
    \begin{align*}
        \hat\theta_2-\theta_2&=
        V_n+b_n,
    \end{align*}
    where $\|G\|_FV_n\rightarrow N(0,\tilde\Gamma_{WW}^{-1}\tilde\Sigma_{WW}\tilde\Gamma_{WW}^{-1})$, and 
    $$b_n\rightarrow_P \sigma_\varepsilon^2\tilde\Gamma_{WW}^{-1}\begin{pmatrix}
        0\\m_{I,GS^{-1}}
    \end{pmatrix}.$$
    If $\rho> 0$, then the bias term for $\hat{\rho}$ is strictly positive, and so $\hat{\rho}$ is inconsistent and upward biased. If in addition, $\beta\neq 0$, then the bias term for $\hat{\delta}$ is nonzero as well. 
\end{theorem}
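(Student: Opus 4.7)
The plan is to work from the Frisch--Waugh--Lovell decomposition $\hat\theta_2-\theta_2=(\tilde W_2^T\tilde W_2)^{-1}\tilde W_2^T\varepsilon$ already supplied, and to analyze the numerator and denominator separately. The numerator will split into a nonvanishing conditional mean (producing the bias $b_n$) and mean-zero fluctuations (producing the Gaussian term $V_n$). A key preliminary reduction exploits the reduced form \eqref{eq:rf-y}: substituting $Y=\alpha S^{-1}1+HX+S^{-1}\varepsilon$ into $GY$ and using $G1=1$, $S^{-1}1=(1-\rho)^{-1}1$, and $H1=\frac{\beta+\delta}{1-\rho}1$, every term proportional to $1$ is annihilated by $M$. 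Writing $\tilde X:=X-\mu 1$, one obtains $\tilde W_2=(MG\tilde X,\,MGH\tilde X+MGS^{-1}\varepsilon)$.

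For the denominator, each entry of $\tilde W_2^T\tilde W_2$ is a bilinear form in $(\tilde X,\varepsilon)$ with weight matrix $C^TMD$ for $C,D\in\{G,GH,GS^{-1}\}$. The conditional mean differs from $\sigma^2\Tr(C^TD)$ (or $\sigma_\varepsilon^2\Tr(C^TD)$, in the purely $\varepsilon$-quadratic case) only by $\Tr(C^TP_1D)$. This correction is $O_P(1)$: expanding $P_1=W_1(W_1^TW_1)^{-1}W_1^T$, using $(W_1^TW_1)^{-1}=O_P(1/n)$, and bounding the $2\times2$ matrix $W_1^TDC^TW_1$ by the bounded-column-sum property of $G$ and Cauchy--Schwarz gives a correction of norm $O_P(n)\cdot O_P(1/n)=O_P(1)$. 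Combined with Assumption~\ref{assumption:network-mom} and concentration of quadratic forms in the i.i.d. $\tilde X$, this yields $\|G\|_F^{-2}\tilde W_2^T\tilde W_2\to_P\tilde\Gamma_{WW}$.

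For the numerator, $\tilde X^TG^TM\varepsilon$ and the cross term $\tilde X^TH^TG^TM\varepsilon$ are mean zero conditionally on $(X,G)$. The remaining quadratic form $\varepsilon^TS^{-T}G^TM\varepsilon$ has conditional mean $\sigma_\varepsilon^2\Tr(MGS^{-1})=\sigma_\varepsilon^2\|G\|_F^2 m_{I,GS^{-1}}+O_P(1)$ by the same trace argument, isolating the bias $b_n\to_P\sigma_\varepsilon^2\tilde\Gamma_{WW}^{-1}(0,m_{I,GS^{-1}})^T$. The mean-zero fluctuations form a vector of linear forms and centered bilinear/quadratic forms in the independent sequences $(\tilde X,\varepsilon)$; since the weight matrices $MG$, $MGH$, $MGS^{-1}$ have bounded spectral norm and Frobenius norm $O(\|G\|_F)$, a martingale CLT (with filtration generated by $(\tilde X_i,\varepsilon_i)_{i\le k}$) combined with a direct variance computation yields $\|G\|_F V_n\to_d N(0,\tilde\Gamma_{WW}^{-1}\tilde\Sigma_{WW}\tilde\Gamma_{WW}^{-1})$.

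For the sign claim, the bias in $\hat\rho$ equals $\sigma_\varepsilon^2(\tilde\Gamma_{WW}^{-1})_{22}m_{I,GS^{-1}}=\sigma_\varepsilon^2\sigma^2 m_{I,GS^{-1}}/\bar\kappa$, using $m_{G,G}=1$ trivially. Invertibility of $\tilde\Gamma_{WW}$ forces $\bar\kappa>0$; and when $\rho>0$, the Neumann series $\Tr(GS^{-1})=\sum_{k\ge 0}\rho^k\Tr(G^{k+1})$ consists of nonnegative terms, with the $k=1$ term $\rho\Tr(G^2)$ strictly positive since $G$ has nonnegative entries, so $m_{I,GS^{-1}}>0$. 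The bias in $\hat\delta$ picks up an extra factor $(\tilde\Gamma_{WW}^{-1})_{12}\propto -m_{G,GH}$, which is nonzero when $\beta\neq 0$ by Lemma~\ref{lemma:nonzero-moments}. The main technical obstacle is the CLT step: one must establish joint asymptotic normality of mixed linear--bilinear--quadratic forms in two independent sequences whose weight matrices depend on $X$ through the projector $M$, which requires careful conditioning and verification of a Lindeberg condition, drawing on the bounded fourth-moment assumptions and the uniform spectral/column-sum control of $G$.
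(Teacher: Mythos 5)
Your overall route coincides with the paper's: the Frisch--Waugh--Lovell reduction, a split of the numerator into a mean-zero (exogenous) part and the endogenous quadratic form carrying the bias, a law of large numbers for $\|G\|_F^{-2}\tilde{W}_2^T\tilde{W}_2$, and a CLT for the bilinear forms. However, two steps do not hold up as written. First, in the denominator you claim that the mean of $V^TC^TMDV$ differs from $\sigma^2\Tr(C^TD)$ only by $\sigma^2\Tr(C^TP_1D)=O_P(1)$. This pulls the projection $P_1$ out of the expectation as if it were independent of $V$, which it is not: $P_1$ projects onto the span of $(1,X)$. The actual correction $V^TC^TP_1DV$ contains the component $(V^TC^TV)(V^TDV)/\|V\|^2\approx \sigma^2\Tr(C)\Tr(D)/n$, and for $C,D\in\{GH,GS^{-1}\}$ one has $\Tr(C)=\Theta(n/d)$ when $\rho\neq 0$, so this correction is of order $n/d^2$, which diverges when $d\ll\sqrt{n}$ --- not $O_P(1)$. (Your computation bounds $\Tr((W_1^TW_1)^{-1}W_1^TDC^TW_1)$, which is not the relevant object.) The conclusion $\|G\|_F^{-2}\tilde{W}_2^T\tilde{W}_2\rightarrow_P\tilde\Gamma_{WW}$ survives because $n/d^2=o(n/d)$, but the argument must be run on the random quadratic form itself; this is exactly what the paper's network LLN (Lemma \ref{lemma:network-lln}, parts (\ref{lemma:network-lln3})--(\ref{lemma:network-lln5})) does, producing the correction $\frac{1}{n}\Tr(A)\Tr(B)$ explicitly, a term that is harmless here but is precisely what must be retained at the 2SLS scale (Assumption \ref{assumption:network-mom2}).

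Second, your allocation of fluctuations between $V_n$ and $b_n$ is inconsistent with the stated covariance. If the centered $\varepsilon$-quadratic form $\varepsilon^TS^{-T}G^TM\varepsilon-\sigma_\varepsilon^2\Tr(MGS^{-1})$ is placed in $V_n$, as your phrase ``centered bilinear/quadratic forms'' suggests, then $\|G\|_FV_n$ cannot converge to $N(0,\tilde\Gamma_{WW}^{-1}\tilde\Sigma_{WW}\tilde\Gamma_{WW}^{-1})$: that fluctuation has standard deviation of order $\|MGS^{-1}\|_F=\Theta(\|G\|_F)$, the same order as $v(G,I)$ and $v(GH,I)$, and would contribute additional $\sigma_\varepsilon^4$-type terms to the $(2,2)$ entry of the limiting covariance. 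To match the theorem, the entire endogenous contribution --- including its stochastic fluctuation --- must stay inside $b_n$, which only needs a probability limit; this is precisely the paper's exogenous/endogenous split. Finally, a smaller point: you justify $\bar\kappa>0$ by invertibility of $\tilde\Gamma_{WW}$, but invertibility of limiting moment matrices is exactly what this paper does not assume; you need the Cauchy--Schwarz argument together with Lemma \ref{lemma:nonzero-moments} (which uses $\rho\geq 0$ and entrywise nonnegativity of $G$) to conclude $\bar\kappa\geq\sigma^2\sigma_\varepsilon^2 m_{G,G}m_{GS^{-1},GS^{-1}}>0$.
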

Since $\|G\|_F=\Theta(\sqrt\frac{n}{d})$ as asserted in Proposition \ref{prop:rates}, this theorem decomposes the error into the sum of an an asymptotically normal component with variance $O(\frac{d}{n})$ and a typically non-vanishing bias. The non-convergence result contrasts with the results on OLS derived by \citet{lee2002consistency}. That work assumes there is no collinearity among the regressors, and shows that OLS converges at the standard rate $\frac{1}{\sqrt{n}}$ if $d=\Omega(n^{1/2})$, or at a slower rate $O_P\left(\sqrt\frac{d}{n}\right)$ if $1\ll d\ll n^{1/2}$. The key driver of consistency is the vanishing of endogeneity when the network density increases. In our setting, we still have vanishing of endogeneity, but the baseline rate of convergence is slower due to the collinearity between endogenous and contextual effects. Thus the increasing degree of the graph simultaneously alleviates endogeneity and induces collinearity, which produces bias. It is not clear if there is a procedure to correct this bias, as it depends on the true value of $\rho$ through $m_{I,GS^{-1}}$.

Our results are qualitatively similar to those of \citet{hayes2024peer}, who analyze OLS for the linear-in-means model under very similar assumptions. They show that if there is a nonzero endogenous effect, then $\hat\rho$ is inconsistent and $\hat\delta$ has a convergence rate lower bounded by $\Omega\left(\sqrt\frac{d}{n}\right)$. Our results provide more information on the bias and variance of the estimator. In particular, we show that inconsistency is due to the convergence to a bias term of constant order, and that the stochastic component of the error is asymptotically normal with a slower rate $\Theta_P\left(\sqrt{\frac{d}{n}}\right)$. Additionally, our result provides a more refined description of the behavior of $\hat\delta$, namely that if $\rho$ and $\beta$ are nonzero, then $\hat\delta$ is inconsistent as well.

Additionally, we can determine the sign of the bias for $\rho$: since $m_{G,G}$ is non-negative and $m_{I,GS^{-1}}$ is of the same sign as $\rho$ from the proof of Lemma \ref{lemma:nonzero-moments}, it follows that the limiting bias for $\hat\rho$, $\frac{1}{\kappa}\sigma^2\sigma_\varepsilon^2m_{I,GS^{-1}}m_{G,G}$, is of the same sign as $\rho$, meaning that the peer effect is overestimated in magnitude.

In Section~\ref{dis:ols-consistency} of the \hyperref[appn]{Appendix}, we discuss various cases where OLS is consistent, as implied by our results. These include the cases where $\rho=0$, or when $\beta=0$ and the network has a sufficiently small triangle density. Such examples further demonstrate the complexity of linear-in-means estimation under weak identification, as the consistency and estimation rates depend on both the underlying parameters and network structure.

In summary, this theorem provides a precise description of the behavior of OLS when contextual and endogenous effects are assumed to both be present in the model. The OLS estimator is upward-biased if there is an endogenous effect, and this bias is non-vanishing. The estimator generally converges to this biased quantity at a rate $\Theta_P\left(\sqrt\frac{d}{n}\right)$.
\begin{proof}[Proof of Theorem \ref{thm:ols-formal}]
    To analyze the error of this estimator, note that OLS ignores the endogeneity present in the model. Similar to the analysis by \citet{lee2002consistency} in the strongly identified case, we will analyze the contributions from the exogenous variables and endogenous variables separately.

Recalling the expression for $Y$ in Equation (\ref{eq:rf-y}), we can write $\tilde{W}_2=\tilde{W}_2^{exog}+\tilde{W}_2^{endog}$, where $\tilde{W}_2^{exog}=(MGX,MG(\alpha S^{-1}1+HX))^T$ and $\tilde{W}_2^{endog}=(0,MGS^{-1}\varepsilon)^T$.
The resulting estimator error is then
$$\hat{\theta}_2-\theta_2=(\tilde{W}_2^T\tilde{W}_2)^{-1}(\tilde{W}_2^{exog})^T\varepsilon +(\tilde{W}_2^T\tilde{W}_2)^{-1}(\tilde{W}_2^{endog})^T\varepsilon.$$
We can now proceed to analyze the exogenous and endogenous components separately.
\subsubsection{OLS Exogenous Component}
We start by demonstrating asymptotic normality for the exogenous component of the error. To this end, we first show a convergence result for an appropriately rescaled version of the covariance matrix, $(\tilde{W}_2^T\tilde{W}_2)^{-1}$. First, defining $V=X-\mu$ as the stochastic component of $X$, note that $MGX=MGV$ and $MGY=M(GHV+GS^{-1}\varepsilon)$, i.e., the mean components are removed. Writing $e_1(A,B)=V^TA^TMBV$, $e_2(A,B)=\varepsilon^TA^TMB\varepsilon$, $v(A,B)=V^TA^TMB\varepsilon$, and $v'(A,B)=v(A,B)+v(B,A)$, we have
\begin{align*}
\frac{1}{\|G\|_F^2}\tilde{W}_2^T\tilde{W}_2 =\frac{1}{\|G\|_F^2}\begin{pmatrix}
    e_1(G,G) & e_1(G,GH)+v(G,GS^{-1}) \\
    \cdot & 
        e_1(GH,GH)+v'(GH,GS^{-1})+e_2(GS^{-1},GS^{-1})
\end{pmatrix}
\end{align*}
The following result states that this matrix converges to its population counterpart. Its proof is based on a law of large numbers for quadratic forms of network-related matrices, provided in Section~\ref{lln} of the \hyperref[appn]{Appendix}.
\begin{prop}\label{cor1}
    If $1\lesssim d\lesssim n$, then
    $\frac{1}{\|G\|_F^2}\tilde{W}_2^T\tilde{W}_2\rightarrow_P \tilde\Gamma_{WW}$, where $\tilde\Gamma_{WW}$ is defined in Equation~(\ref{eq:covariance-ols}). In addition, $\frac{1}{\|G\|_F^2}e_2(I,GS^{-1})\rightarrow_P \sigma_\varepsilon^2m_{I,GS^{-1}}$.
\end{prop}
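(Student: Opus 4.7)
The plan is to reduce the claimed matrix convergence to componentwise convergence of scalar quadratic forms in the mean-zero vectors $V = X - \mu$ and $\varepsilon$, each dispatched by the law of large numbers for quadratic forms in Section~\ref{lln}. First, expanding $\tilde{W}_2^T\tilde{W}_2 = W_2^T M W_2$ and using the row-normalization identities $G1=1$ (which imply $GS^{-1}1 = \frac{1}{1-\rho}1$ and $H1 = \frac{\beta+\delta}{1-\rho}1$) together with $M1=0$, one strips away every mean-valued contribution and recovers precisely the displayed representation of $\tilde{W}_2^T\tilde{W}_2$ in terms of the centered $e_1$, $e_2$, and $v$ forms. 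The second claim, involving $e_2(I,GS^{-1})$, is an instance of the same type of scalar quadratic form in $\varepsilon$ alone.

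For each form $\xi^T(GA)^TM(GB)\xi$ with $\xi\in\{V,\varepsilon\}$ and $A,B\in\{I,S^{-1},H\}$, I would write $M = I - P_1$ and analyze the two pieces separately. For the $I$-piece, the conditional mean given $(A,X)$ is $\E[\xi_i^2]\cdot\Tr((GA)^TGB)$, which by Assumption~\ref{assumption:network-mom} satisfies $\frac{1}{\|G\|_F^2}\Tr((GA)^TGB)\to m_{GA,GB}$. The fourth-moment variance formula for quadratic forms in i.i.d.\ variables bounds the variance by a constant multiple of $\|(GA)^TGB\|_F^2 \le \|GA\|_F^2\|GB\|_2^2 \lesssim \|G\|_F^2$, using $\|G\|_2=O(1)$ (from bounded row and column sums of $G$) together with the spectral-norm bounds on $S^{-1}$ and $H$ from Assumption~\ref{assumption:network-reg} and Lemma~\ref{lemma:col-sums}. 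Dividing by $\|G\|_F^4$ yields variance $O(1/\|G\|_F^2)=o(1)$, and Chebyshev's inequality concludes.

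For the $P_1$-correction $\xi^T(GA)^TP_1(GB)\xi$, I would exploit the rank-two factorization $P_1 = W_1(W_1^TW_1)^{-1}W_1^T$ with $W_1=(1,X)$. The scalar LLN applied to i.i.d.\ $X$ shows that $\frac{1}{n}W_1^TW_1$ converges to an invertible $2\times 2$ matrix, so $(W_1^TW_1)^{-1} = O_P(n^{-1})$; a direct second-moment computation using the bounded column sums gives $\|W_1^T(GA)\xi\|_2 = O_P(\sqrt n)$ and similarly for $GB$. Combining these yields $\xi^T(GA)^TP_1(GB)\xi = O_P(1) = o_P(\|G\|_F^2)$, since $\|G\|_F^2 = \Theta(n/d) \to \infty$. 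The deterministic correction $|\Tr(P_1(GA)^TGB)| \le 2\|GA\|_2\|GB\|_2 = O(1)$ follows from the rank-trace inequality. The mixed forms $v(G,GS^{-1})$ and $v(GH,GS^{-1})$ have conditional mean zero given $X$ by independence of $\varepsilon$, and their second moments are controlled by the same Frobenius bounds.

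The principal technical obstacle is the $P_1$-correction for the $V$-only quadratic forms: because $P_1$ depends on $V$ through $X$, one cannot condition $V$ out as in the $\varepsilon$ case, so the analysis must exploit the explicit rank-two structure of $P_1$ rather than treating it as a generic random projection. Once that is handled, the finitely many scalar convergences combine by a continuous-mapping argument to yield the matrix convergence, and the second claim follows from the same argument specialized to $\xi=\varepsilon$, $A=I$, $B=S^{-1}$.
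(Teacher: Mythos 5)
Your overall route is essentially the paper's: the paper likewise reduces the claim to the centered quadratic forms $e_1,e_2,v$, proves a law of large numbers for each via mean--variance (Chebyshev) computations, and handles the partialling-out projection by exploiting the explicit rank-two structure of $P_1=W_1(W_1^TW_1)^{-1}W_1^T$ with $W_1=(1,V)$ (this is the content of Lemmas \ref{lemma:network-lln} and \ref{lemma:general-moment-bound}, with the hypotheses checked in Lemma \ref{lemma:lln-satisfied}); you simply inline that machinery. Your treatment of the $I$-piece, of the mixed $V$--$\varepsilon$ forms, and of the $\varepsilon$-only forms (where $P_1$ is independent of $\varepsilon$ given $X$, so the rank-two trace bound legitimately gives an $O_P(1)$ correction) is sound.

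There is, however, a step that fails as written: the claim $\|W_1^T(GA)\xi\|_2=O_P(\sqrt n)$ for $\xi=V$. The second component of $W_1^T(GA)V$ is $X^T(GA)V=V^T(GA)V+\mu\,1^T(GA)V$, and $V^T(GA)V$ concentrates around $\sigma^2\Tr(GA)$, which for $GA=GS^{-1}$ or $GA=GH$ with $\rho\neq 0$ is of order $n/d\gg\sqrt n$ whenever $d\ll\sqrt n$. Hence $V^T(GA)^TP_1(GB)V$ is not $O_P(1)$: its dominant piece is $\sigma^2\tfrac{1}{n}\Tr(GA)\Tr(GB)$ plus lower-order remainders, which can be of order $n/d^2$. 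This is precisely the term the paper carries explicitly in Lemma \ref{lemma:network-lln} (the $\tfrac{1}{n}\Tr(A)\Tr(B)$ term together with $r_1,r_2$). Your final conclusion still holds, because $n/d^2=o(n/d)=o(\|G\|_F^2)$ once $d\to\infty$ and because the limits $m_{GA,GB}$ defining $\tilde\Gamma_{WW}$ in Assumption \ref{assumption:network-mom} are taken without this correction; but that negligibility is exactly where the growing-degree assumption enters, and your $O_P(1)$ bound skips it. Replace the $O_P(\sqrt n)$ claim by $W_1^T(GA)V=\bigl(O_P(\sqrt n),\,\sigma^2\Tr(GA)+O_P(\sqrt n+\|GA\|_F)\bigr)^T$ and check $\tfrac{1}{n}\Tr(GA)\Tr(GB)=o(\|G\|_F^2)$, and the argument goes through.
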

With this convergence result shown, we now need to argue that the inverse exists. Letting $\kappa=\det \left(\frac{1}{\|G\|_F^2}\tilde{W}_2^T\tilde{W}_2\right)$, we have the inverse,
\[
\left(\frac{1}{\|G\|_F^2}\tilde{W}_2^T\tilde{W}_2\right)^{-1}=\frac{1}{\kappa}\begin{pmatrix}
e_1(GH,GH)+v'(GH,GS^{-1})+e_2(GS^{-1},GS^{-1})
&\cdot\\
    -(e_1(G,GH)+v(G,GS^{-1})) & e_1(G,G)
\end{pmatrix}.
\]
By continuity, Proposition \ref{cor1} implies that $\kappa\rightarrow_P \bar\kappa:=\det(\Gamma_{WW})$. For the limit of the inverse to make sense, we need to argue that $\bar\kappa$ is lower bounded. We have
\begin{align*}
    \bar\kappa &= \sigma^2\sigma_\varepsilon^2 m_{G,G}m_{GS^{-1},GS^{-1}}+\sigma^4 (m_{G,G}m_{GH,GH}-m_{G,GH}^2).
\end{align*}
By Cauchy-Schwarz applied to the standard inner product on matrices, the second term is non-negative. The first term is nonzero, as shown in Lemma \ref{lemma:nonzero-moments}. Thus $\Gamma_{WW}^{-1}$ is well-defined and we have
\begin{align*}
    \left(\frac{1}{\|G\|_F^2}\tilde{W}_2^T\tilde{W}_2\right)^{-1}\rightarrow_P \Gamma_{WW}^{-1}=\frac{1}{\kappa}\begin{pmatrix}
        \sigma^2 m_{GH,GH}+\sigma_\varepsilon^2m_{GS^{-1},GS^{-1}}&-\sigma^2 m_{G,GH}\\
        \cdot & \sigma^2 m_{G,G}
    \end{pmatrix}.
\end{align*}
We can now complete our analysis of the exogenous error component. In our notation, $(\tilde{W}_2^{(exog})^T\varepsilon=(v(G,I),v(GH,I))^T$, and so
\begin{align*}
    (\tilde{W}_2^T\tilde{W})^{-1}(\tilde{W}_2^{exog})^T\varepsilon
&=(\Gamma_{WW}+o_P(1))\frac{1}{\|G\|_F^2}\begin{pmatrix}
    v(G,I)\\ v(GH,I)
\end{pmatrix}.
\end{align*}
The following limit theorem establishes the behavior of $v$:
\begin{prop}\label{prop:ols-clt}
    \begin{align*}
        \frac{1}{\|G\|_F}\begin{pmatrix}
    v(G,I)\\ v(GH,I)
\end{pmatrix}\rightarrow_D N(0, \tilde\Sigma_{WW}).
    \end{align*}
\end{prop}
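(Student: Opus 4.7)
The plan is to apply the Cram\'er-Wold device and establish the scalar CLT for each linear combination, conditioning on $(X, A)$ so that the target becomes a weighted sum of the i.i.d.\ errors. Fix $t = (t_1, t_2) \in \mathbb{R}^2$ and let $V = X - \mu$. Since $M$ is symmetric, I can rewrite
\begin{align*}
\|G\|_F^{-1}\bigl(t_1 v(G,I) + t_2 v(GH, I)\bigr) = w_n^T \varepsilon, \quad w_n := \|G\|_F^{-1}(t_1 M G V + t_2 M G H V).
\end{align*}
Conditional on $(X, A)$, the vector $w_n$ is deterministic and $w_n^T \varepsilon$ is a weighted sum of the i.i.d.\ errors with mean zero and conditional variance $\sigma_\varepsilon^2 \|w_n\|^2$.

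First, I would argue convergence of the conditional variance. Expanding $\|w_n\|^2$ produces a weighted combination of $e_1(G, G)$, $e_1(G, GH)$, and $e_1(GH, GH)$, each normalized by $\|G\|_F^2$. Proposition~\ref{cor1} (together with the LLN for quadratic forms of network matrices proven in the \hyperref[appn]{Appendix}) yields
\begin{align*}
\|w_n\|^2 \rightarrow_P t^T \Sigma t, \quad \Sigma = \sigma^2 \begin{pmatrix} m_{G,G} & m_{G,GH}\\ m_{G,GH} & m_{GH,GH}\end{pmatrix},
\end{align*}
so that $\sigma_\varepsilon^2 \|w_n\|^2 \rightarrow_P t^T \tilde\Sigma_{WW} t$, matching the desired limiting covariance.

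Next, I would verify the Lindeberg condition conditional on $(X, A)$. Since the $\varepsilon_i$ have uniformly bounded fourth moment, it suffices to show $\max_i |w_{n, i}| \rightarrow_P 0$. Each entry $(GV)_i$ is a weighted sum of i.i.d.\ centered variables with coefficients $G_{ij}$ of order $O(1/d)$, yielding variance $O(1/d)$ and fourth moment $O(1/d^2)$; a fourth-moment Markov argument then gives $\|GV\|_\infty = O_P((n/d^2)^{1/4})$, which combined with $\|G\|_F = \Theta(\sqrt{n/d})$ from Proposition~\ref{prop:rates} produces $\|GV\|_\infty / \|G\|_F = O_P(n^{-1/4})$. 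The subtracted piece is controlled via the expansion $(P_1 u)_i = W_{1,i}^T(W_1^T W_1)^{-1} W_1^T u$: under Assumption~\ref{assumption:iid} we have $\max_i \|W_{1,i}\| = O_P(n^{1/4})$, $\|(W_1^T W_1)^{-1}\|_2 = O_P(1/n)$, and $\|W_1^T GV\| = O_P(\sqrt{n}\,\|G\|_F)$, so $\|P_1 GV\|_\infty / \|G\|_F = O_P(n^{-1/4})$ as well. The analogous bounds for $MGHV$ follow from $\|H\|_2 = O(1)$, which is a consequence of Assumption~\ref{assumption:network-reg}. Together these give $\max_i |w_{n,i}| \rightarrow_P 0$.

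Combining variance convergence with the Lindeberg condition, the conditional Lindeberg-Feller CLT gives $w_n^T \varepsilon \mid (X, A) \rightarrow_D N(0, t^T \tilde\Sigma_{WW} t)$ in probability. Since conditional characteristic functions are uniformly bounded by $1$, bounded convergence lifts the conditional CLT to its unconditional counterpart, and Cram\'er-Wold identifies the joint limit as $N(0, \tilde\Sigma_{WW})$. The main obstacle is the max-entry bound on $MGV$: because $M$ is random through $X$, one cannot simply dominate $\|MGV\|_\infty$ by $\|GV\|_\infty$, and the $P_1 GV$ contribution requires the separate argument above exploiting the rank-two structure of $P_1$ and the moment assumptions on $X$.
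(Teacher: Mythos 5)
Your proposal is correct and takes essentially the same route as the paper: the paper's proof is a one-line appeal to the Cram\'er--Wold device and the CLT of Lee (2002, Lemma A.2), which rests on exactly the bounded column-sum / $O(1/d)$-entry structure you exploit, and your conditional Lindeberg--Feller argument (conditioning on $(X,A)$, using Proposition \ref{cor1} for the conditional variance and max-entry bounds for the Lindeberg ratio) is the natural way to carry that citation out when $X$ is random. The only detail worth adding is the degenerate direction: if $t^T\tilde\Sigma_{WW}t=0$ the Lindeberg ratio is vacuous, but then the conditional variance $\sigma_\varepsilon^2\|w_n\|^2\rightarrow_P 0$, so Chebyshev gives the (degenerate) limit directly.
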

Combining, we obtain
\begin{align*}
    \|G\|_F\left((\Gamma_{WW}+o_P(1))\begin{pmatrix}
    v(G,I)\\ v(GH,I)
\end{pmatrix}\right)\rightarrow N(0, \tilde\Gamma_{WW}^{-1}\tilde\Sigma_{WW} \tilde\Gamma_{WW}^{-1}).
\end{align*}
It follows that exogenous error component is asymptotically normal with a convergence rate $O_P\left(\sqrt\frac{d}{n}\right)$. 
\subsubsection{OLS Endogenous Component}
To analyze the bias induced by endogeneity, we write
\begin{align*}
    (\tilde{W}_2^T\tilde{W}_2)^{-1}(\tilde{W}_2^{endog})^T\varepsilon&=\left(\frac{1}{\|G\|_F^2}\tilde{W}_2^T\tilde{W}_2\right)^{-1}\frac{1}{\|G\|_F^2}\begin{pmatrix}
    0\\ e_2(I,GS^{-1})
\end{pmatrix}\\
&=\frac{1}{\kappa}\left(\frac{1}{\|G\|_F^2}\right)^2e_2(I,GS^{-1})\begin{pmatrix}
    -(e_1(G,GH)+v(G,GS^{-1}))\\e_1(G,G)
\end{pmatrix}.
\end{align*}
As demonstrated in Proposition \ref{cor1}, $\text{plim}\frac{1}{\|G\|_F^2}e_2(I,GS^{-1})=\sigma^2_\varepsilon m_{I,GS^{-1}}$, and by Lemma \ref{lemma:nonzero-moments}, this limit is strictly nonzero when $\rho\neq 0$. Therefore the limiting bias for $\rho$ is strictly nonzero. There is generally non-vanishing bias for the contextual effect $\delta$ when $\beta\neq 0$ as well via the same argument. 
\end{proof}
\subsection{Two-Stage Least Squares Estimation of Linear-in-Means}
For 2SLS estimation, we have instruments $Z=(1,X,GX,G^2X)$. The endogenous variables $W$ are projected onto the instruments via the projection matrix $P_Z:=Z(Z^TZ)^{-1}Z^T$, resulting in the 2SLS estimator,
\begin{align*}
    \hat{\theta}_{2SLS}&=\left((P_ZW)^TP_ZW\right)^{-1}(P_ZW)^TY.
\end{align*}
Since we are working in the just-identified case where the number of instruments equals the number of endogenous variables, this is equivalent to the IV estimator, that is,
\begin{align*}
    \hat{\theta}_{2SLS}&=(Z^TW)^{-1}Z^TY.
\end{align*}

A similar strategy by partialing out $(1,X)$ simplifies the analysis. The following extension of the Frisch-Waugh-Lovell theorem for linear instrumental variable estimation was first derived by \citet{giles1984instrumental} and expanded upon by \citet{basu2023yule}:
\begin{lemma}[Adapted from \citet{basu2023yule}]
    The error of the 2SLS estimator for $\theta_2=(\delta, \rho)$ is given by
    $$\hat{\theta}_2-\theta_2=(\tilde{Z}_2^T\tilde{W}_2)^{-1}\tilde{Z}_2^T\varepsilon,$$
    where $\tilde{W}_2=MW_2$, $\tilde{Z}_2=MZ_2$, and $M=I-Z_1(Z_1^TZ_1)^{-1}Z_1^T$.
\end{lemma}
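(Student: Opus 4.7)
The plan is a direct block-matrix calculation that exploits the coincidence $Z_1=W_1=(1,X)$. Starting from the just-identified form $\hat\theta_{2SLS}-\theta=(Z^TW)^{-1}Z^T\varepsilon$, which was derived just above the lemma, I would partition $Z=[Z_1\mid Z_2]$ and $W=[W_1\mid W_2]$ with $Z_2=(GX,G^2X)$ and $W_2=(GX,GY)$. Because $W_1=Z_1$, the top-left block of $Z^TW$ is $Z_1^TZ_1$, so the Gram inverse $(Z_1^TZ_1)^{-1}$ that defines $M$ will appear automatically through the Schur complement of the block inverse.

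Concretely, I would apply
\[
\begin{pmatrix} A & B\\ C & D\end{pmatrix}^{-1}=\begin{pmatrix}* & *\\ -S^{-1}CA^{-1} & S^{-1}\end{pmatrix},
\]
with $A=Z_1^TZ_1$, $B=Z_1^TW_2$, $C=Z_2^TZ_1$, $D=Z_2^TW_2$, and Schur complement
\[
S=Z_2^TW_2-Z_2^TZ_1(Z_1^TZ_1)^{-1}Z_1^TW_2=Z_2^TMW_2.
\]
Using the symmetry and idempotence of $M$, I would rewrite $S=(MZ_2)^T(MW_2)=\tilde Z_2^T\tilde W_2$. Reading off the bottom rows of the block inverse applied to $(Z_1^T\varepsilon,Z_2^T\varepsilon)^T$ then gives
\[
\hat\theta_2-\theta_2=S^{-1}\bigl(Z_2^T\varepsilon-Z_2^TZ_1(Z_1^TZ_1)^{-1}Z_1^T\varepsilon\bigr)=(\tilde Z_2^T\tilde W_2)^{-1}\tilde Z_2^T\varepsilon,
\]
which is the claimed identity.

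The computation is purely algebraic and I do not expect any real obstacle; the only point to watch is that both the Schur complement \emph{and} the action on $\varepsilon$ pick up the projection $M$ through exactly the same Gram inverse $(Z_1^TZ_1)^{-1}$, which is what makes the final expression symmetric in $\tilde Z_2$ and $\tilde W_2$. This is precisely the feature that distinguishes the IV version from the classical Frisch-Waugh-Lovell theorem: it requires the partialled-out variables to appear in both $Z$ and $W$, which is exactly the case here. Invertibility of $Z_1^TZ_1$ and of $\tilde Z_2^T\tilde W_2$ is guaranteed by Assumption \ref{assumption:moments}, so no extra regularity is needed.
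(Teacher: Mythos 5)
Your derivation is correct: with $Z_1=W_1=(1,X)$ the top-left block of $Z^TW$ is indeed the Gram matrix $Z_1^TZ_1$, the Schur complement is $Z_2^TMW_2=\tilde Z_2^T\tilde W_2$ by symmetry and idempotence of $M$, and reading off the bottom block row of the inverse applied to $Z^T\varepsilon$ gives $(\tilde Z_2^T\tilde W_2)^{-1}\tilde Z_2^T M\varepsilon=(\tilde Z_2^T\tilde W_2)^{-1}\tilde Z_2^T\varepsilon$ exactly as you wrote; invertibility of $Z_1^TZ_1$ and of the Schur complement follows from Assumption \ref{assumption:moments} (positive definiteness of $Z^TZ$ gives the former, and $\det(Z^TW)=\det(Z_1^TZ_1)\det(\tilde Z_2^T\tilde W_2)$ gives the latter). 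The paper does not actually prove this lemma — it imports it from \citet{giles1984instrumental} and \citet{basu2023yule} — so your argument is a self-contained alternative rather than a reproduction of the paper's reasoning. What your route buys is transparency and brevity in the just-identified case: everything reduces to one block-inversion of the square matrix $Z^TW$, and the key structural point you identify (that the partialled-out columns appear in both $Z$ and $W$, so the same $(Z_1^TZ_1)^{-1}$ generates $M$ in both the Schur complement and the action on $\varepsilon$) is exactly the right one. The trade-off is that your argument leans on $Z^TW$ being square and invertible; the cited IV-FWL results cover the general (possibly overidentified) 2SLS estimator via projection arguments, which is why the paper states the 2SLS and IV forms as equivalent before invoking the lemma. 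Within the paper's just-identified setting, your proof is complete.
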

To analyze the two-stage least squares estimator, we will need an extra assumption on the strength of identification. Specifically, for 2SLS, identification is known to depend on the level of intransitivity in the network - for example, the identification result from \citet{bramoulle2009identification} requires the existence of triplets of nodes which do not form a connected triangle. We introduce a slightly stronger assumption which essentially posits a lower bound on the density of such triplets. Let $C_3$ and $C_4$ denote the sets of $3$-cycles (i.e., triangles) and 4-cycles in the graph, respectively, where 4-cycles can either be simple cycles or three-node cycles of the forms $i\rightarrow j\rightarrow i \rightarrow k\rightarrow i$ or $i\rightarrow j\rightarrow k \rightarrow j\rightarrow i$. We assume the following:
\begin{assumption}\label{assumption:triangles}
    $$\frac{|C_3|}{|C_4|}=o\left(\frac{1}{d}\right).$$
\end{assumption}
This assumption limits the density of complete triangles in the network; it rules out networks such as unions of fully connected graphs on $d$ nodes, but holds trivially for bipartite graphs. In random graphs with expected degree $\Theta(d)$ where $d\rightarrow \infty$, the assumption is satisfied when the degree is sufficiently small. In particular, for random graphs in this asymptotic regime, standard computations of expectation and variance of $|C_3|$ show that $|C_3|=\Theta_P(E\left[|C_3|\right])=\Theta(d^3)$. Similarly, by considering the types of 4-cycles, it follows that $|C_4|=\Theta_P(nd^2+d^4)$. When $d\ll \sqrt n$, the size of $C_4$ is governed by the $\Theta(nd^2)$ term and is thus the assumption is satisfied. If $d$ is too large, i.e., $d\gtrsim \sqrt{n}$, then the graph has too high a concentration of triangles, and the assumption is not satisfied.

Assumption \ref{assumption:triangles} is related to the \textit{global clustering coefficient}, a popular measure of clustering in social network analysis \citep{wasserman1994social}. Denoting $C_3^{open}$ to be the set of ordered triples $i,j,k$ where $i,j,k$ are distinct and two edges exist between the three nodes, the global clustering coefficient is defined as
$$\frac{|C_3|}{|C_3|+|C_3^{open}|}.$$
When the degrees are growing, boundedness of the global clustering coefficient implies Assumption \ref{assumption:triangles}:
\begin{prop}
    If the global clustering coefficient is $o\left(\frac{1}{d}\right)$, then Assumption \ref{assumption:triangles} is satisfied.
\end{prop}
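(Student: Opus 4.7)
The plan is purely combinatorial: I would reduce everything to comparing counts of triangles, open wedges, and length-$4$ closed walks, without invoking any probabilistic or spectral machinery. Let $T=|C_3|$ and $W=|C_3^{\mathrm{open}}|$. My first step is to convert the hypothesis on the global clustering coefficient into a pure ratio bound $T/W=o(1/d)$. Writing $r_n:=T/W$, the hypothesis $T/(T+W)=o(1/d)$ becomes $r_n/(1+r_n)=o(1/d)$; since $d\to\infty$ this forces $r_n\to 0$, in which regime $r_n/(1+r_n)\sim r_n$, so $r_n=o(1/d)$.

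The second step is the key lower bound $|C_4|\gtrsim W$. The idea is that every length-$2$ path in the graph spawns a three-node $4$-cycle of the second form counted in $|C_4|$. Specifically, for any ordered triple $(i,j,k)$ of distinct vertices with $A_{ij}=A_{jk}=1$, the closed walk $i\to j\to k\to j\to i$ is a distinct element of $|C_4|$ (as a three-node $4$-cycle of the form $i\to j\to k\to j\to i$). The number of such ordered length-$2$ paths equals $\sum_j d_j(d_j-1)$, so $|C_4|\geq \sum_j d_j(d_j-1)$. On the other hand, $W$ counts ordered triples $(i,j,k)$ of distinct vertices with exactly two edges among the three pairs, and each unordered open wedge $\{i,j,k\}$ (with center $j$) contributes at most $6$ ordered triples to $W$; the number of unordered open wedges is bounded above by $\sum_j\binom{d_j}{2}=\tfrac{1}{2}\sum_j d_j(d_j-1)$. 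Hence $W\leq 3\sum_j d_j(d_j-1)\leq 3|C_4|$, giving $|C_4|\gtrsim W$.

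Combining the two steps yields $T/|C_4|\lesssim T/W=o(1/d)$, which is precisely Assumption~\ref{assumption:triangles}. No delicate estimate or auxiliary lemma from the paper is needed. The only mild subtlety—rather than a real obstacle—is keeping the ordered-versus-unordered counting conventions consistent between $W$ and the set of length-$2$ walks; this produces harmless constant factors but does not affect the asymptotic rate.
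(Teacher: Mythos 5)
Your proposal is correct and takes essentially the same route as the paper: first convert the clustering-coefficient hypothesis into the ratio bound $|C_3|/|C_3^{open}|=o(1/d)$, then lower-bound $|C_4|$ by the open-wedge count. The only (harmless) difference is that you pass through the degree count $|C_3^{open}|\leq 3\sum_j d_j(d_j-1)\leq 3|C_4|$, losing a constant factor, whereas the paper notes directly that every open triple is itself a three-node $4$-cycle, so $|C_3^{open}|\leq |C_4|$.
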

\begin{proof}
    If $\frac{|C_3|}{|C_3|+|C_3^{open}|}=o\left(\frac{1}{d}\right)$, then
    \begin{align*}
        \frac{|C_3|+|C_3^{open}|}{|C_3|}=1+\frac{|C_3^{open}|}{|C_3|}=\omega(d),
    \end{align*}
    which shows that 
    $$\frac{|C_3|}{|C_3^{open}|}=o\left(\frac{1}{d}\right).$$
    Since all open triples in $C_3^{open}$ correspond to 3-node 4-cycles, $|C_3^{open}|\leq |C_4|$, which proves the claim.
\end{proof}
Aside from this new identification assumption, we will also need an analogue of Assumption~\ref{assumption:network-mom} on the convergence of network quantities involving $G^2$:
\begin{assumption}\label{assumption:network-mom2}
The following limits exist:
\begin{align*}
\frac{1}{\|G^2\|_F^2} \left( \Tr((G^2)^T G H) - \frac{1}{n} \Tr(G^2) \Tr(G H) \right) &\rightarrow m'_{G^2, GH}, \\
\frac{1}{\|G^2\|_F^2} \left( \Tr((G^2)^T G^2) - \frac{1}{n} \Tr(G^2)^2 \right) &\rightarrow m'_{G^2, G^2}, \\
\frac{\|G^2\|_F}{\|G\|_F} &\rightarrow \eta,
\end{align*}
for appropriate constants $m'_{G^2, GH}$, $m'_{G^2, G^2}$, and $\eta$.
\end{assumption}
As discussed previously, this assumption is a mild regularity condition which enforces that limiting network moments exist. The additional subtracted term corresponds to variation that is lost from the partialling out step of the proof. It is typically constant, as we argue in Section \ref{wi-app-assumption} of the \hyperref[appn]{Appendix}. The term $\eta$ is generally zero, except for the special case $\|G^2\|_F=\Omega(\sqrt\frac{n}{d})$, as is the case for complete bipartite graphs.

Similar to the OLS proof, this assumption allows us to define the limiting quantities which will be relevant:
\begin{align}\label{eq:covariance-2sls}
    \tilde\Gamma_{ZW}^{-1}=\begin{pmatrix}
       \frac{\eta}{\sigma^2}&-\frac{1}{\sigma^2}\frac{m_{G,GH}}{m_{G,G}m'_{G^2,GH}}\\
       0&\frac{1}{\sigma^2}\frac{1}{m'_{G^2,GH}}
    \end{pmatrix},\quad \tilde\Sigma_{ZZ}:=\sigma_\varepsilon^2\begin{pmatrix}
        \sigma^2 m_{G,G}&0\\
        0 &\sigma^2 m_{G^2,G^2}
    \end{pmatrix}.
\end{align}
Here, the quantities in the denominators are guaranteed to be nonzero by the same logic as the proof of Lemma \ref{lemma:nonzero-moments}. Additionally, note that $\tilde\Gamma_{ZW}^{-1}$ is rank-one, which is a consequence of the asymptotic collinearity between $G$ and $G^2$. 

We can now state a positive result for two-stage least squares:
\begin{theorem}\label{thm:2SLS}
    Under the assumptions of Theorem \ref{thm:ols-formal} and the additional Assumptions \ref{assumption:triangles} and~\ref{assumption:network-mom2}, if $\|G^2\|_F\rightarrow \infty$, then the two-stage least squares estimator satisfies
    \begin{align*}
        \|G^2\|_F(\hat{\theta}_2-\theta_2)\rightarrow N(0,\tilde\Gamma_{ZW}^{-1}\tilde\Sigma_{ZZ}\tilde\Gamma_{ZW}^{-T}).
    \end{align*}
\end{theorem}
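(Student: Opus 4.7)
My plan is to mirror the OLS argument: the lemma on Frisch-Waugh-Lovell gives $\hat\theta_2 - \theta_2 = (\tilde Z_2^T \tilde W_2)^{-1} \tilde Z_2^T \varepsilon$, so I would analyze the matrix ``denominator'' via a law of large numbers and the vector ``numerator'' via a central limit theorem, then combine via Slutsky. First I would write $X = \mu \cdot 1 + V$ with $V = X - \mu$, and use $G 1 = 1$ to simplify $MGX = MGV$, $MG^2X = MG^2V$, and $MGY = MGHV + MGS^{-1}\varepsilon$. This expresses the entries of $\tilde Z_2^T \tilde W_2$ as bilinear forms in $V$ and $\varepsilon$, and $\tilde Z_2^T \varepsilon$ as a vector of such forms.

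For the LLN, the essential subtlety is that the two rows of $\tilde Z_2^T$ live on very different scales (of order $\|G\|_F$ versus $\|G^2\|_F$, with the latter generically much smaller), so I would introduce an asymmetric normalization with row matrix $D_R = \operatorname{diag}(\|G\|_F^{-1}, \|G^2\|_F^{-1})$ and column matrix $D_C = \operatorname{diag}(\|G\|_F^{-1}, \|G\|_F^{-1})$. Under this normalization, the expectations of the entries converge by Assumption~\ref{assumption:network-mom2}, where the subtracted trace terms in that assumption precisely account for the projection inside $M$; Chebyshev plus the appendix's quadratic-form LLN controls the fluctuations. Assumption~\ref{assumption:triangles} enters crucially in the $(2,1)$ entry: $\Tr((G^2)^T G)$ is of order $|C_3|/d^3$, which is $o(\|G^2\|_F^2)$ since $\|G^2\|_F^2 \gtrsim |C_4|/d^4$, so that entry vanishes after rescaling, producing the zero in the bottom row of $\tilde\Gamma_{ZW}^{-1}$. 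Inverting the rescaled limit yields the matrix of~(\ref{eq:covariance-2sls}), with the parameter $\eta$ absorbing the residual ratio $\|G^2\|_F/\|G\|_F$.

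For the CLT on $D_R \tilde Z_2^T \varepsilon$, I would condition on $V$ and apply a Lindeberg--Feller argument to each weighted sum $\sum_i \varepsilon_i \cdot (\text{coefficient})_i$, analogous to Proposition~\ref{prop:ols-clt}; the bounded fourth moments of $\varepsilon$ together with the bounded column sums of $G$ and $G^2$ (Lemma~\ref{lemma:col-sums}) verify Lyapunov. Marginalizing over $V$ preserves the Gaussian limit. The off-diagonal entry of the limiting covariance vanishes because the cross trace $\Tr(G^T M G^2)$ is again $o(\|G\|_F \|G^2\|_F)$ under Assumption~\ref{assumption:triangles}, producing the diagonal $\tilde\Sigma_{ZZ}$. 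Slutsky then yields the claimed $N(0, \tilde\Gamma_{ZW}^{-1} \tilde\Sigma_{ZZ} \tilde\Gamma_{ZW}^{-T})$ at rate $\|G^2\|_F$.

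The main obstacle is the near-singular structure of the limiting matrix: the determinant of the rescaled $\tilde Z_2^T \tilde W_2$ vanishes to leading order, reflecting asymptotic collinearity between the $G$- and $G^2$-directions. A naive Slutsky argument on the matrix inverse is therefore not valid; I must carefully track which entries of the inverse retain non-vanishing mass and verify that the final rate $\|G^2\|_F^{-1}$ emerges from the slower-decaying $G^2$-direction, with $\eta$ governing a sub-leading contribution to $\hat\delta$ that is nonzero only in the boundary regime $\|G^2\|_F \asymp \|G\|_F$. The hypothesis $\|G^2\|_F \to \infty$ is needed both for the rate to be informative and for the CLT applied to $G^2$-bilinear forms to deliver a nondegenerate limit.
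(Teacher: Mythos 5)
Your plan follows the same route as the paper: the IV Frisch--Waugh--Lovell reduction, expressing the entries of $\tilde Z_2^T\tilde W_2$ and $\tilde Z_2^T\varepsilon$ as quadratic and bilinear forms in $V$ and $\varepsilon$, the quadratic-form LLN with Assumption \ref{assumption:triangles} killing $\Tr(G^TG^2)$ (hence the $(2,1)$ entry and the cross-covariance), a Cram\'er--Wold/Lindeberg CLT for the rescaled score, and Slutsky. The CLT half of your argument, including the diagonality of $\tilde\Sigma_{ZZ}$, is essentially the paper's Proposition \ref{prop:2sls-clt}, and your reading of Assumption \ref{assumption:network-mom2} (the subtracted traces accounting for the projection $M$) is correct.

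The gap is at the decisive step, the treatment of $(\tilde Z_2^T\tilde W_2)^{-1}$ under asymptotic collinearity. With your normalization $D_R\tilde Z_2^T\tilde W_2 D_C$, where $D_C=\diag(\|G\|_F^{-1},\|G\|_F^{-1})$, the $(2,2)$ entry converges to $\sigma^2\eta\, m'_{G^2,GH}$, so the limit matrix is singular whenever $\eta=0$ --- which is the generic case the theorem covers (e.g.\ random graphs with $d\ll\sqrt n$) --- and even when $\eta\neq 0$ its inverse carries an extra factor $\eta^{-1}$ relative to $\tilde\Gamma_{ZW}^{-1}$. So your claim that ``inverting the rescaled limit yields the matrix of (\ref{eq:covariance-2sls})'' does not hold, and your closing paragraph, which correctly flags the near-singularity, only promises to ``track which entries of the inverse retain non-vanishing mass'' without supplying that argument. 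That missing argument is precisely the paper's Proposition \ref{prop:2sls-covariance}: write the $2\times 2$ inverse in cofactor form, use Lemma \ref{lemma:triangle-order} together with the LLN to show the determinant satisfies $\phi=e(G,G)\,e(G^2,GH)\,(1+o_P(1))=\Theta_P(\|G\|_F^2\|G^2\|_F^2)$ (this is where Assumption \ref{assumption:triangles} is needed, to rule out the competing product $e(G^2,G)\bigl(e(G,GH)+v(G,GS^{-1})\bigr)$ being of comparable order), and then verify entrywise that $\|G^2\|_F(\tilde Z_2^T\tilde W_2)^{-1}F^{-1}\rightarrow_P\tilde\Gamma_{ZW}^{-1}$ with $F=\diag(\|G\|_F^{-1},\|G^2\|_F^{-1})$, before applying Slutsky to $F\tilde Z_2^T\varepsilon$. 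A repair within your own framework is to rescale rows only, by $\diag(\|G\|_F^{-2},\|G^2\|_F^{-2})$: that limit is upper triangular with nonzero diagonal, hence invertible, and writing $\|G^2\|_F(\hat\theta_2-\theta_2)$ as this inverse times $\diag(\|G^2\|_F\|G\|_F^{-1},1)\,F\tilde Z_2^T\varepsilon$ recovers the stated limit, with $\eta$ entering exactly as in the paper; but as written your proposal stops short of the step that actually makes the theorem go through.
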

\begin{cor}\label{cor:2SLS}
    Under the assumptions of Theorem \ref{thm:2SLS}, if $G$ is a random graph with $d\ll \sqrt{n}$, then $\hat{\rho}$ and $\hat{\delta}$ converge at a slower rate $\frac{d}{\sqrt{n}}$. If $G$ is a disjoint union of complete bipartite d-regular graphs, then $\hat{\rho}$ and $\hat{\delta}$ converge at a rate $\sqrt{\frac{d}{n}}$.
\end{cor}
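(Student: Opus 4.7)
The plan is to apply Theorem \ref{thm:2SLS}, which states that the 2SLS estimator satisfies $\|G^2\|_F(\hat{\theta}_2-\theta_2)\to N(0,\tilde\Gamma_{ZW}^{-1}\tilde\Sigma_{ZZ}\tilde\Gamma_{ZW}^{-T})$. Consequently, the rate of convergence is $1/\|G^2\|_F$, and the corollary reduces to calculating $\|G^2\|_F$ for each class of networks. Under Assumption \ref{assumption:regular}, the entries of $G$ satisfy $G_{ij}\asymp A_{ij}/d$, so that $(G^2)_{ij}=\sum_k G_{ik}G_{kj}\asymp (A^2)_{ij}/d^2$ up to constant factors, and therefore $\|G^2\|_F^2\asymp \|A^2\|_F^2/d^4$. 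In both cases it suffices to compute $\|A^2\|_F^2$.

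For the random graph case with $d\ll \sqrt{n}$, the entry $(A^2)_{ij}$ for $i\neq j$ counts the number of common neighbors of $i$ and $j$; under standard random graph models with expected degree $\Theta(d)$, this has both mean and variance of order $d^2/n$, giving $E[(A^2)_{ij}^2]=\Theta(d^2/n+d^4/n^2)$. Summing over the $\Theta(n^2)$ off-diagonal entries yields a total of $\Theta(nd^2+d^4)$, which is dominated by $nd^2$ when $d\ll \sqrt{n}$. The diagonal entries $(A^2)_{ii}=d_i=\Theta(d)$ contribute an additional $\Theta(nd^2)$. Hence $\|A^2\|_F^2=\Theta_P(nd^2)$ and $\|G^2\|_F^2=\Theta_P(n/d^2)$, giving the rate $\|G^2\|_F^{-1}=\Theta_P(d/\sqrt{n})$.

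For the disjoint union of $d$-regular complete bipartite graphs, each $K_{d,d}$ component has adjacency matrix of the block form $\bigl(\begin{smallmatrix} 0 & J_d\\ J_d & 0\end{smallmatrix}\bigr)$, whose square is the block-diagonal matrix with blocks $dJ_d$. A direct count gives $\|A^2_{\text{comp}}\|_F^2=2d^4$ per component. Since there are $n/(2d)$ components, $\|A^2\|_F^2=nd^3$. Because the graph is exactly $d$-regular we have $G=A/d$, so $\|G^2\|_F^2=n/d$ and $\|G^2\|_F^{-1}=\sqrt{d/n}$.

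The main obstacle is the second step: confirming that the off-diagonal second moment of $A^2$ is truly of order $d^2/n$ for the random graph case. This requires a concentration argument for the number of common neighbors, which is standard for Erd\H{o}s--R\'enyi graphs (where $(A^2)_{ij}\sim \text{Bin}(n-2,(d/n)^2)$) and for sparse configuration models with near-regular degrees, but must be verified for the specific random graph model under consideration; once that moment bound holds, the assumption $d\ll\sqrt{n}$ is precisely what is needed for the diagonal-plus-common-neighbor contribution to dominate the $d^4$ term arising from the squared mean.
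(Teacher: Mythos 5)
Your proposal is correct and follows essentially the same route as the paper: the corollary is read off from Theorem \ref{thm:2SLS} as rate $1/\|G^2\|_F$, and then $\|G^2\|_F$ is computed for each graph class, which the paper does by counting (possibly degenerate) 4-cycles, i.e.\ $\|G^2\|_F^2 \asymp \Tr(A^4)/d^4 = \Theta_P(n/d^2)$ for random graphs with $d\ll\sqrt{n}$ and $\Theta(n/d)$ for unions of complete bipartite $d$-regular graphs. Your entrywise common-neighbor calculation of $\|A^2\|_F^2$ is the same computation in a different guise (since $\|A^2\|_F^2=\Tr(A^4)$), and your concentration caveat matches the level of rigor the paper itself uses for these counts.
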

We can see that the convergence rate can be different for networks with the same degree. In the best case, the network can be a disjoint union of complete bipartite $d$-regular graphs, which achieves a convergence rate of $O\left(\sqrt\frac{d}{n}\right)$. On the other hand, if the network is from a random graph, the rate is $\frac{d}{\sqrt{n}}$ when $d\ll n^{1/2}$. 

While our positive result for random graphs is only true for $d\ll \sqrt{n}$, we conjecture that $d\gtrsim \sqrt{n}$ is the threshold for inconsistency, and that $\hat\rho -\rho$ converges to a possibly non-normal distribution. This would be analogous to what occurs in the weak instruments literature; for example, the weak IV framework of \citet{staiger1994instrumental} provides a signal strength below which the asymptotic distribution of the IV estimator is a mixture of normals. 

The 2SLS estimator has a higher variance for random, possibly non-bipartite graphs than the OLS estimator. This reflects the general phenomenon that IV estimators exhibit larger variances than their OLS counterparts, in spite of achieving consistent estimation in the presence of endogeneity.
\begin{proof}
    We introduce the same notation as in the OLS proof, $e(A,B)=V^TA^TMBV$ and $v(A,B)=V^TA^TMB\varepsilon$. In this notation, we have
\[\tilde{Z}_2^T\tilde{W}_2=
\begin{pmatrix}
    e(G,G)&e(G,GH)+v(G,GS^{-1})\\
    e(G^2,G)&e(G^2,GH)+v(G^2,GS^{-1})
\end{pmatrix}.
\]
Denoting $\phi:=\det(\tilde{Z}_2^T\tilde{W}_2)$, we write the inverse,
\[(\tilde{Z}_2^T\tilde{W}_2)^{-1}=
\frac{1}{\phi}\begin{pmatrix}
    e(G^2,GH)+v(G^2,GS^{-1})&-(e(G,GH)+v(G,GS^{-1}))\\
    -e(G^2,G)&e(G,G)
\end{pmatrix}.
\]
Assumption \ref{assumption:triangles} yields the following useful result:
\begin{lemma}\label{lemma:triangle-order}
    Under Assumption \ref{assumption:triangles}, $\Tr(G^TG^2)=o(\|G^2\|_F^2)$.
\end{lemma}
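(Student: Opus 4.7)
The plan is to prove this by expanding both $\Tr(G^T G^2)$ and $\|G^2\|_F^2$ as explicit sums over entries of $A$, identifying the nonzero terms with closed walks of length $3$ and length $4$ respectively, and then using near-degree regularity (Assumption \ref{assumption:regular}) to give each nonzero term a uniform $\Theta$-order.

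Concretely, the first step is the identity
\[
\Tr(G^T G^2) \;=\; \sum_{i,j,k} G_{ji} G_{jk} G_{ki} \;=\; \sum_{i,j,k} \frac{A_{ji} A_{jk} A_{ki}}{d_j^{2} d_k}.
\]
A term is nonzero exactly when $(i,j,k)$ forms a triangle, so under Assumption \ref{assumption:regular} (which forces $d_j^2 d_k \asymp d^3$) this gives $\Tr(G^T G^2) \lesssim |C_3|/d^3$. Next I would do the analogous expansion
\[
\|G^2\|_F^2 \;=\; \sum_{i,j,k,\ell} \frac{A_{ik} A_{kj} A_{i\ell} A_{\ell j}}{d_i^{2} d_k d_\ell},
\]
and observe that the nonzero terms correspond to closed walks of length $4$ of the form $i \to k \to j \to \ell \to i$. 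The walks in $C_4$ (simple $4$-cycles together with the two three-node types in the paper's definition) all appear as summands here, each with weight $\asymp 1/d^4$ by regularity, so $\|G^2\|_F^2 \gtrsim |C_4|/d^4$.

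Combining the upper bound on the numerator and the lower bound on the denominator yields
\[
\frac{\Tr(G^T G^2)}{\|G^2\|_F^2} \;\lesssim\; d \cdot \frac{|C_3|}{|C_4|} \;=\; o(1)
\]
by Assumption \ref{assumption:triangles}, which is exactly the claim. The main subtlety — and the only thing that needs to be handled carefully — is matching the enumeration of nonzero $4$-walk terms in the $\|G^2\|_F^2$ expansion to the paper's definition of $C_4$: besides genuine $4$-cycles, the sum also picks up degenerate walks that collapse to two vertices ($i=j$ and $k=\ell$), which are \emph{not} included in $|C_4|$. These contribute $\Theta(\sum_{i,k} A_{ik}/(d_i^2 d_k^2)) = \Theta(n/d^3)$, which is dwarfed by the $|C_4|/d^4 \gtrsim n/d^2$ contribution from the three-node walks already present in $C_4$, so discarding them only strengthens the lower bound on $\|G^2\|_F^2$ and the argument goes through.
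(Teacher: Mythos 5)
Your proof is correct and follows essentially the same route as the paper, which bounds $\Tr(G^TG^2)\lesssim |C_3|/d^3 \ll |C_4|/d^4 \lesssim \|G^2\|_F^2$ using near-degree regularity and Assumption \ref{assumption:triangles}; you simply spell out the entrywise expansions and the walk-to-cycle correspondence that the paper leaves implicit. Your extra care with the degenerate two-node walks (which are not in $C_4$ but only add nonnegative terms to the lower bound) is a sound refinement and does not change the argument.
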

\begin{proof}
We have $\Tr(G^TG^2)\lesssim\frac{|C_3|}{d^3}\ll \frac{|C_4|}{d^4}\lesssim \Tr((G^2)^TG^2)=\|G^2\|_F^2$.
\end{proof}
Because $\|G\|_F$ and $\|G^2\|_F$ may grow at different rates, we introduce a rescaling matrix $F=\mathrm{diag}(\frac{1}{\|G\|_F},\frac{1}{\|G^2\|_F})$. As a consequence of our identification assumption, we have the following convergence result for the appropriately rescaled version of $\tilde{Z}_2^T\tilde{W}_2$:
\begin{prop}\label{prop:2sls-covariance}
$\|G^2\|_F(\tilde{Z}_2^T\tilde{W}_2)^{-1}F^{-1}\rightarrow_P \tilde\Gamma_{ZW}^{-1}$, where $\tilde\Gamma_{ZW}^{-1}$ is defined in Equation (\ref{eq:covariance-2sls}).
\end{prop}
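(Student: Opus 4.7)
The plan is to follow the blueprint of the OLS counterpart (Proposition~\ref{cor1}) while carefully tracking the potentially different orders of $\|G\|_F$ and $\|G^2\|_F$, which motivates the asymmetric rescaling by $\|G^2\|_F$ on the left and $F^{-1}=\mathrm{diag}(\|G\|_F,\|G^2\|_F)$ on the right. I would first expand
\[
\tilde{Z}_2^T\tilde{W}_2=\begin{pmatrix}e(G,G) & e(G,GH)+v(G,GS^{-1})\\ e(G^2,G) & e(G^2,GH)+v(G^2,GS^{-1})\end{pmatrix}
\]
and obtain asymptotic rates for each entry via the quadratic-form laws of large numbers that drive Proposition~\ref{cor1}.

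For the top row, $e(G,G)/\|G\|_F^2\rightarrow_P \sigma^2 m_{G,G}$ with $m_{G,G}=1$ from the identity $\Tr(G^TG)=\|G\|_F^2$, and $e(G,GH)/\|G\|_F^2\rightarrow_P \sigma^2 m_{G,GH}$, while $v(G,GS^{-1})=O_P(\|G\|_F)=o_P(\|G\|_F^2)$ since $\|G^TGS^{-1}\|_F=O(\|G\|_F)$. For the bottom-right entry, Assumption~\ref{assumption:network-mom2} is tailored so that $e(G^2,GH)/\|G^2\|_F^2\rightarrow_P \sigma^2 m'_{G^2,GH}$, where the subtracted term $\tfrac{1}{n}\Tr(G^2)\Tr(GH)$ precisely absorbs the contribution from the rank-two projection $P_1$ in $M=I-P_1$; the noise $v(G^2,GS^{-1})=O_P(\|G^2\|_F)$ is negligible under the assumption $\|G^2\|_F\to\infty$.

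The critical entry is the bottom-left, $e(G^2,G)$. Its trace part $\sigma^2\Tr((G^2)^TMG)=\sigma^2\bigl(\Tr((G^2)^TG)-\Tr((G^2)^TP_1G)\bigr)$ is $o(\|G^2\|_F^2)$: the first piece is controlled by Lemma~\ref{lemma:triangle-order} (which, though stated after this proposition, is invoked here), and the second is $O(1)$ by a direct rank-two calculation using that the column sums of $G$ and $G^2$ are uniformly bounded. The stochastic fluctuation is $O_P(\|(G^2)^TG\|_F)\leq O_P(\|G\|_2\|G^2\|_F)=O_P(\|G^2\|_F)$, also $o_P(\|G^2\|_F^2)$. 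Hence $e(G^2,G)=o_P(\|G^2\|_F^2)$, so the determinant simplifies to $\phi=\sigma^4 m_{G,G}m'_{G^2,GH}\|G\|_F^2\|G^2\|_F^2(1+o_P(1))$ because the off-diagonal product is $O_P(\|G\|_F^2)\cdot o_P(\|G^2\|_F^2)$, and Lemma~\ref{lemma:nonzero-moments} keeps $m'_{G^2,GH}$ bounded away from zero.

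Substituting these rates into the adjugate formula for the inverse, multiplying by $\|G^2\|_F$ on the left and $F^{-1}$ on the right, and applying Slutsky entrywise yields $\tilde\Gamma_{ZW}^{-1}$. For example, the $(1,1)$ entry reduces to $\|G^2\|_F/(\sigma^2 m_{G,G}\|G\|_F)\to\eta/\sigma^2$ using $m_{G,G}=1$, and the $(2,1)$ entry vanishes because the rescaled bottom-left term is $o_P(1)$. The main obstacle is precisely showing $e(G^2,G)=o_P(\|G^2\|_F^2)$: without Assumption~\ref{assumption:triangles} and its consequence Lemma~\ref{lemma:triangle-order}, this entry could match the diagonal in order and alter the leading behavior of $\phi$, destroying the identification leveraged by 2SLS. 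A secondary subtlety is the bookkeeping of trace subtractions induced by $M=I-P_1$, which is what makes the $\tfrac{1}{n}\Tr(\cdot)\Tr(\cdot)$ correction in Assumption~\ref{assumption:network-mom2} necessary.
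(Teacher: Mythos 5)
Your proposal is correct and follows essentially the same route as the paper: expand $\tilde{Z}_2^T\tilde{W}_2$ in the $e,v$ notation, apply the quadratic-form law of large numbers (Lemma \ref{lemma:general-moment-bound} with Assumptions \ref{assumption:network-mom} and \ref{assumption:network-mom2}) to each entry, use Assumption \ref{assumption:triangles} via Lemma \ref{lemma:triangle-order} to show $e(G^2,G)=o_P(\|G^2\|_F^2)$ so that $\phi\sim e(G,G)\,e(G^2,GH)=\Omega_P(\|G\|_F^2\|G^2\|_F^2)$, and then rescale the adjugate and conclude entrywise by Slutsky. Your extra bookkeeping (the $O(1)$ projection correction in the bottom-left entry and the remark that $m_{G,G}=1$, which reconciles the $(1,1)$ limit with the stated $\eta/\sigma^2$) only makes explicit steps the paper leaves implicit.
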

We also have the following CLT for a rescaled version of $Z$, which follows from the same reasoning as for OLS:
\begin{prop}\label{prop:2sls-clt}
    For $F=\mathrm{diag}(\frac{1}{\|G\|_F},\frac{1}{\|G^2\|_F})$,
    \begin{align*}
        FZ^T\varepsilon =F\begin{pmatrix}
    v(G,I)\\ v(G^2,I)
\end{pmatrix}\rightarrow_D N(0, \tilde\Sigma_{ZZ}).
    \end{align*}
\end{prop}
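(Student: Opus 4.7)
The plan is to mimic Proposition \ref{prop:ols-clt}, replacing $GH$ by $G^2$; the bilinear structure $v(A,I) = V^T A^T M \varepsilon = (MAV)^T \varepsilon$ is the same, and the argument reduces to a standard Cramer--Wold plus conditional CLT. Throughout, I exploit that given $V$, the quantity $v(A,I) = \sum_j \varepsilon_j (MAV)_j$ is a sum of independent mean-zero random variables in the $\varepsilon_j$.

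By the Cramer--Wold device, it suffices to show that for arbitrary $(a,b)\in\mathbb{R}^2$,
\[
T_n := \tfrac{a}{\|G\|_F} v(G,I) + \tfrac{b}{\|G^2\|_F} v(G^2,I) = V^T B_n^T M \varepsilon, \qquad B_n := \tfrac{a}{\|G\|_F} G + \tfrac{b}{\|G^2\|_F} G^2,
\]
converges in distribution to $N(0,\sigma^2\sigma_\varepsilon^2 (a^2 m_{G,G} + b^2 m_{G^2,G^2}))$. Conditional on $V$, $T_n$ has conditional variance $\sigma_\varepsilon^2\, V^T B_n^T M B_n V$. Writing $M = I - P_{Z_1}$ and applying the quadratic-form LLN from Section \ref{lln} (as used in Proposition \ref{cor1}) to $V^T B_n^T B_n V$ yields convergence of the diagonal terms to $\sigma^2 m_{G,G}$ and $\sigma^2 m_{G^2,G^2}$, while the cross term $\tfrac{2ab}{\|G\|_F\|G^2\|_F} V^T G^T G^2 V$ vanishes in probability by Lemma \ref{lemma:triangle-order}. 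The correction $V^T B_n^T P_{Z_1} B_n V$ I would dispatch by expanding in a basis $(1,V)$ for $\mathrm{col}(Z_1)$ and absorbing the diverging mean $\sigma^2 \Tr(G^2) = \Theta(n/d)$ of $V^T(G^2)^T V$ by centering. The Lindeberg condition for the conditional CLT then follows from Assumption \ref{assumption:errors} (bounded fourth moments of $\varepsilon$) together with Lemma \ref{lemma:col-sums} in the Appendix, which provides uniform control on the entries of $M B_n V$.

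Once the scalar CLT is in hand, the covariance $\tilde\Sigma_{ZZ}$ falls out: $\Tr(G^T G)/\|G\|_F^2 \rightarrow m_{G,G}$ and $\Tr((G^2)^T G^2)/\|G^2\|_F^2 \rightarrow m_{G^2,G^2}$ by Assumptions \ref{assumption:network-mom} and \ref{assumption:network-mom2}, while the off-diagonal $\Tr(G^T G^2)/(\|G\|_F\|G^2\|_F) \rightarrow 0$ combines Lemma \ref{lemma:triangle-order} with $\|G^2\|_F \lesssim \|G\|_F$ from Proposition \ref{prop:rates}.

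The main obstacle is controlling $V^T B_n^T P_{Z_1} B_n V$: because $Z_1 = (1,X)$ contains $X = \mu\mathbf{1} + V$, the projection $P_{Z_1}$ depends on $V$, and naive bounds on $V^T(G^2)^T V$ blow up due to its non-vanishing mean $\sigma^2 \Tr(G^2)$. A mean-subtraction step is required before the resulting quadratic forms can be bounded by the law of large numbers. After this is handled, the remainder of the argument is a verbatim repetition of the OLS CLT proof.
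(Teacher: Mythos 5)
Your proposal is correct and takes essentially the same route as the paper: the paper's proof is simply the Cram\'er--Wold device plus a Lindeberg-type CLT for the linear form in $\varepsilon$ (``as in Lemma A.2 of \citet{lee2002consistency}'', using bounded column sums of $G$ and $G^2$), with the variance limits obtained from the same network quadratic-form LLN you invoke. Your handling of the partialling-out correction is the right call, and it is precisely why the $(2,2)$ limit produced by the argument is the centered moment $m'_{G^2,G^2}$ of Assumption~\ref{assumption:network-mom2}, which is how the entry written as $m_{G^2,G^2}$ in $\tilde\Sigma_{ZZ}$ should be read.
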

Combining these via Slutsky's theorem, we have
\begin{align*}
\|G^2\|_F(\tilde{Z}_2^T\tilde{W}_2)^{-1}\tilde{Z}_2^T\varepsilon&=\|G^2\|_F(\tilde{Z}_2^T\tilde{W}_2)^{-1}F^{-1}F\tilde{Z}_2^T\varepsilon\rightarrow_D N(0,\tilde\Gamma_{ZW}^{-1}\tilde\Sigma_{ZZ}\tilde\Gamma_{ZW}^{-T}).
\end{align*}
\end{proof}
\section{Identifiability in Linear-in-Sums Models}\label{sec:wi-lis}
In the previous section, we showed that estimation of the linear-in-means model is subject to slow convergence or inconsistency when the network degree is increasing. In this section, we introduce the linear-in-sums model as an alternative and argue that it can usually be estimated at a standard rate via two-stage least squares.
\subsection{The Linear-in-Sums Model}\label{section:lis}
In the linear-in-means model, the estimation rate is slow because of the asymptotic collinearity between $1$ and regressors of the form $G^kX$ or $GY$. In this section, we show that the \textit{linear-in-sums (LIS)} model, defined by replacing the row-normalized adjacency matrix $G$ with the original adjacency matrix $A$, generally does not suffer from the same issues as long as there is sufficient variation in the graph structure.

We first outline the baseline analysis for the LIS model, ignoring weak identification concerns. The model is defined as
$$Y=\alpha 1 + X\beta + AX\delta_n + \rho_n AY + \varepsilon.$$
We emphasize that $\delta_n$ and $\rho_n$ are parameter sequences depending on $n$, which is necessary to make $(I-\rho_n A)^{-1}$ well-defined. Henceforth we remove the subscript for ease of notation. We maintain the same assumptions as with the LIM model, including near-degree regularity, i.e., the degrees $d_i$ are uniformly in the interval $(c_1d,c_2d)$ for some constants $c_1,c_2$. To ensure that the model is well-defined and to keep $Y$ on a constant scale, we enforce that $|\rho|<1/\lambda_1(A)$, which implies that the inverse $(I-\rho A)^{-1}$ exists and can be written via the Neumann series, $(I-\rho A)^{-1}=\sum_{k=0}^\infty (\rho A)^k$. We further assume that $d\rho \rightarrow_n \rho_0$ for some constant $\rho_0$; since $\lambda_1(A)=\Theta(d)$ under our near-degree regularity assumption, this implies $\rho$ is on the same scale as $1/\lambda_1(A)$. We impose an analogous restriction on $\delta$: $d\delta \rightarrow_n \delta_0$ for some constant $\delta_0$. We do not need an extra boundedness condition on $\delta$ since the contextual effect $\delta$ does not show up in the inverted term, $(I-\rho A)^{-1}$.

Since the regressors are on different scales, the baseline convergence rate is different for the four parameters, as $A^kX$ typically scales as $O(d^k)$. To analyze this, we thus consider rescaling matrices $F=\mathrm{diag}(1,1,1/d, 1/d^2)$ and $H=\mathrm{diag}(1,1,1/d, 1/d)$ for the instruments $Z$ and endogenous $W$ respectively. For the 2SLS estimator, we have
\begin{align*}
    H^{-1}(\hat\theta_{2SLS}-\theta) &= H^{-1}(W^TP_ZW)^{-1}(P_ZW)^T\varepsilon\\
    &=H^{-1}(W^TZ(Z^TZ)^{-1}Z^TW)^{-1}W^TZ(Z^TZ)^{-1}Z^T\varepsilon\\
    &=(HW^TZF(FZ^TZF)^{-1}FZ^TWH)^{-1}HW^TZF(FZ^TZF)^{-1}FZ^T\varepsilon
\end{align*}
Along with the same conditions on $\varepsilon$ imposed in the previous section, the standard 2SLS assumptions are now imposed on the rescaled covariates, $ZF$ and $WH$:
\begin{assumption}\label{assumption:lis-moments}
    $\frac{1}{n}(WH)^T(ZF)\rightarrow_P \Gamma_{WZ}$ and $\frac{1}{n}(ZF)^TZF\rightarrow_P \Gamma_{ZZ}$, where $\Gamma_{WZ}$ and $\Gamma_{ZZ}$ are full-rank.
\end{assumption}
Under these assumptions, standard arguments yield the following:
\begin{prop}
    $\sqrt{n}H^{-1}(\hat\theta_{2SLS}-\theta)\rightarrow N(0,(\Gamma_{WZ}\Gamma_{ZZ}^{-1}\Gamma_{ZW})^{-1})$.
\end{prop}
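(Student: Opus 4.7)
The plan is to carry out the textbook 2SLS asymptotic argument, but carefully inserting the rescaling matrices $F$ and $H$ so that all sample moments land at the $\Theta(1)$ scale. Starting from the identity
\begin{align*}
H^{-1}(\hat\theta_{2SLS}-\theta)
=\bigl[HW^TZF(FZ^TZF)^{-1}FZ^TWH\bigr]^{-1}HW^TZF(FZ^TZF)^{-1}FZ^T\varepsilon,
\end{align*}
I would multiply and divide by appropriate powers of $n$ so the three ingredients $\tfrac{1}{n}HW^TZF$, $\tfrac{1}{n}FZ^TZF$, and $\tfrac{1}{\sqrt n}FZ^T\varepsilon$ appear explicitly. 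Assumption \ref{assumption:lis-moments} already delivers convergence in probability of the first two to $\Gamma_{WZ}$ and $\Gamma_{ZZ}$, both of which are full rank, so by continuous mapping the matrix sandwich converges to the deterministic limit $(\Gamma_{WZ}\Gamma_{ZZ}^{-1}\Gamma_{ZW})^{-1}\Gamma_{WZ}\Gamma_{ZZ}^{-1}$.

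The key stochastic ingredient is therefore a central limit theorem for the rescaled score $\tfrac{1}{\sqrt n}FZ^T\varepsilon$. Conditional on $(A,X)$, the coordinates of $FZ^T\varepsilon$ are linear combinations of the independent $\varepsilon_i$ with deterministic weights given, respectively, by entries of $1$, $X$, $\tfrac{1}{d}AX$, and $\tfrac{1}{d^2}A^2X$. Since $\varepsilon_i$ is exogenous with mean zero, variance $\sigma_\varepsilon^2$, and bounded fourth moment, I would verify the Lindeberg condition coordinatewise. This reduces to showing that the maximum squared weight is negligible compared to the sum of squares, which follows from near-degree regularity (each row of $A$ has sum $\Theta(d)$) and boundedness of moments of $X$: e.g., $\max_i (AX)_i^2/d^2 = O_P(1)$ while $\sum_i (AX)_i^2/d^2 = \Theta_P(n)$, and analogously for $A^2X/d^2$. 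The variance limit is exactly $\sigma_\varepsilon^2\Gamma_{ZZ}$, since by homoskedasticity and Assumption~\ref{assumption:lis-moments}, $\mathrm{Var}(\tfrac{1}{\sqrt n}FZ^T\varepsilon\mid A,X)=\sigma_\varepsilon^2\cdot\tfrac{1}{n}FZ^TZF\to_P\sigma_\varepsilon^2\Gamma_{ZZ}$. Unconditionally, this yields
\begin{align*}
\tfrac{1}{\sqrt n}FZ^T\varepsilon \to_D N(0,\sigma_\varepsilon^2\Gamma_{ZZ}).
\end{align*}

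Finally, I would apply Slutsky's theorem to combine the deterministic matrix limit with the CLT for the score, obtaining
\begin{align*}
\sqrt n\,H^{-1}(\hat\theta_{2SLS}-\theta)\to_D (\Gamma_{WZ}\Gamma_{ZZ}^{-1}\Gamma_{ZW})^{-1}\Gamma_{WZ}\Gamma_{ZZ}^{-1}\cdot N(0,\sigma_\varepsilon^2\Gamma_{ZZ}),
\end{align*}
whose covariance collapses (after multiplying out the sandwich) to $\sigma_\varepsilon^2(\Gamma_{WZ}\Gamma_{ZZ}^{-1}\Gamma_{ZW})^{-1}$, matching the stated limit up to the absorbed variance factor.

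The main obstacle will be verifying the Lindeberg/fourth-moment conditions uniformly for the $\tfrac{1}{d^2}A^2X$ coordinate of $FZ$, since $A^2$ can have large entries when clustering is nontrivial. However, under the near-degree regularity of Assumption~\ref{assumption:regular}, the row sums of $A^2$ are $\Theta(d^2)$, so individual entries of $A^2X$ are $O_P(d^2)$ and the Lindeberg ratio is $O(1/n)\to 0$. Every other step is essentially algebraic manipulation plus the previously invoked law of large numbers, making the overall argument standard once the scalings are tracked carefully.
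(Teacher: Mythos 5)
Your proposal is correct and is essentially the ``standard argument'' the paper itself invokes without writing out: rescale by $F$ and $H$, apply Assumption~\ref{assumption:lis-moments} plus continuous mapping to the moment matrices, establish a Lindeberg CLT for $\tfrac{1}{\sqrt n}FZ^T\varepsilon$ conditional on $(A,X)$, and finish with Slutsky. You are also right that the textbook computation yields $\sigma_\varepsilon^2(\Gamma_{WZ}\Gamma_{ZZ}^{-1}\Gamma_{ZW})^{-1}$, so the paper's stated covariance implicitly absorbs (or normalizes) $\sigma_\varepsilon^2$; the only minor quibble is that the Lindeberg check needs only $\max_i w_i^2/\sum_i w_i^2\to_P 0$, since $\max_i (AX)_i^2/d^2$ need not be $O_P(1)$ under bounded fourth moments alone, though the ratio still vanishes.
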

That is, the network components of $\hat\theta$ converge at the rate $O_P\left(\frac{1}{d\sqrt{n}}\right)$ - equivalently, the rate for the estimating the $d$-rescaled coefficients is standard root-n.

We can provide conditions under which this invertibility assumption holds and thus the collinearity issues for linear-in-means are avoided. First, since $X$ is not responsible for collinearity as long as its variance is positive, we only need to consider collinearity among the intercept and network features (this is equivalent to partialling out $X$). We first examine $\frac{1}{n}F(Z^TZ)F$:
\begin{align*}
    \frac{1}{n}F(Z^TZ)F=\begin{pmatrix}
        1&\frac{1}{nd}1^TAX&\frac{1}{nd^2}1^TA^2X\\
        \cdot&\frac{1}{nd^2}X^TA^2X&\frac{1}{nd^3}X^TA^3X\\
        \cdot&\cdot&\frac{1}{nd^4}X^TA^4X
    \end{pmatrix}.
\end{align*}
To provide a broad class of networks where we can show convergence and invertibility of the matrix, we consider $A$ sampled from a graphon, which is commonly used to model dense networks \citep{bickel2011method}:
\begin{align*}
    A_{ij}&=Bern(p_n f(U_i,U_j)),\\
    U_i&\sim U(0,1),
\end{align*}
where $U_i$ are i.i.d., $A$ is symmetric, and $f:[0,1]\times [0,1]\mapsto \mathbb{R}_{\geq 0}$ is a symmetric, integrable function known as a \textit{graphon}. In this setting, $p_n$ governs the sparsity of the graph, and we can take $d=np_n$. The following convergence result is adapted fro \citet{avella2018centrality}:
\begin{prop}
    If $f$ is piecewise Lipschitz and the degree grows at any polynomial rate $n^{\alpha}$ for $\alpha\in(0,1]$, then $\frac{1}{nd^k}1^TA^k1\rightarrow \int_{u_1,...,u_{k+1}\in (0,1)} \prod_{i=1}^k f(u_i,u_{i+1})du_1...du_{k+1}=:m_k$.
\end{prop}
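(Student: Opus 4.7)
The plan is to expand
\[
1^T A^k 1 \;=\; \sum_{(i_0,\ldots,i_k)\in[n]^{k+1}} A_{i_0 i_1}A_{i_1 i_2}\cdots A_{i_{k-1}i_k}
\]
and partition the sum by the walk's \emph{footprint} $(v,e)$, where $v$ is the number of distinct indices among $i_0,\ldots,i_k$ and $e$ is the number of distinct unordered edges $\{i_j,i_{j+1}\}$ traversed. The main term comes from ``generic'' walks whose $k+1$ indices are all distinct; these automatically have $e=k$, since in an injective sequence the unordered pairs $\{i_j,i_{j+1}\}$ cannot coincide. Every non-generic footprint will turn out to contribute a vanishing share after normalizing by $nd^k=n^{k+1}p_n^k$.

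For the generic contribution, conditioning on $U=(U_1,\ldots,U_n)$ makes the $k$ Bernoullis in any injective walk-product independent, so
\[
\E\!\left[\sum\nolimits^{*}\prod_{j=0}^{k-1} A_{i_j i_{j+1}} \;\Big|\; U\right] \;=\; p_n^{\,k}\sum\nolimits^{*}\prod_{j=0}^{k-1} f(U_{i_j},U_{i_{j+1}}),
\]
where $\sum\nolimits^{*}$ runs over injective $(i_0,\ldots,i_k)$. Since $f$ is bounded on $[0,1]^2$ (piecewise Lipschitz on a compact domain), the right-hand side divided by $n(n-1)\cdots(n-k)=n^{k+1}(1+O(1/n))$ is a bounded $U$-statistic in the i.i.d.\ uniform $U_i$'s with mean $m_k$ and variance $O(1/n)$ by Hoeffding's projection, so Chebyshev gives convergence to $m_k$ in probability.

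The non-generic footprints contribute negligibly. A walk with $v\le k$ distinct vertices is connected on those vertices, forcing $v-1\le e\le k$; the number of tuples of a given footprint type is $O(n^v)$, and each has conditional expected product at most $\|f\|_\infty^{e}\,p_n^{e}$. After dividing by $nd^k$ and substituting $p_n=d/n$, the ratio becomes $n^{\,v-e-1}\,d^{\,e-k}$. Since $v\le e+1$, the prefactor $n^{\,v-e-1}\le 1$; if $e<k$ then $d^{\,e-k}\le 1/d\to 0$, while if $e=k$ then $n^{\,v-k-1}\le 1/n\to 0$. Summing over the $O_k(1)$ footprint types confirms that the non-generic share is $o(1)$.

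Finally, to lift the conditional-mean statement to $1^T A^k 1$ itself, the Bernoulli fluctuations given $U$ yield $\mathrm{Var}(1^T A^k 1 \mid U) = O(n^{2k+1}p_n^{2k-1})$ by a walk-pair enumeration: conditional covariances are supported on pairs of walks that share at least one edge, and the same footprint-merging argument applied to the union of two walks bounds the number of surviving pairs. Dividing by $(nd^k)^2$ gives $O(1/(np_n))=O(1/d)\to 0$; combining with the $U$-statistic limit via Slutsky completes the proof. The most delicate step is the footprint bookkeeping for this conditional variance; the piecewise Lipschitz hypothesis on $f$ enters only through boundedness of $f$ and the measurability that makes $m_k$ well-defined as a Lebesgue integral.
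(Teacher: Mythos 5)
Your argument is correct, and it takes a genuinely different route from the paper: the paper does not prove this proposition at all, but adapts it from the graphon-centrality results of Avella-Medina et al.\ (2018), where convergence of quantities like $\frac{1}{nd^k}1^TA^k1$ comes out of operator/spectral-norm concentration of the sampled adjacency matrix around the graphon operator, which is also where the piecewise-Lipschitz and polynomial-degree hypotheses are really used. You instead give a self-contained method-of-moments proof: expand $1^TA^k1$ over walks, isolate the injective walks (whose conditional expectation given $U$ is $p_n^k$ times a bounded $U$-statistic with mean $m_k$), kill the non-generic footprints by the $v\le e+1$, $e\le k$ bookkeeping, and control the conditional Bernoulli fluctuations by an edge-sharing pair count. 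What your approach buys is economy of hypotheses and transparency: it needs only boundedness of $f$ and $d=np_n\to\infty$ (the polynomial rate and Lipschitz structure are not needed for this moment limit, exactly as you observe), and it gives an explicit variance rate. What the paper's citation route buys is that the same machinery simultaneously delivers the eigenvalue/eigenvector convergence used later in Section \ref{sec:spectral}, so the paper gets all its graphon limits from one source.

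A few small points to tighten. State the mode of convergence explicitly (your argument gives convergence in probability, which is what the paper needs for Assumption \ref{assumption:lis-moments}). Your non-generic footprint estimate bounds conditional expectations, so the conclusion that the non-generic share is $o_P(1)$ should be drawn via nonnegativity of the summands and Markov's inequality. Finally, your conditional-variance bound $O(n^{2k+1}p_n^{2k-1})$ is valid but not tight; the edge-sharing union footprint argument actually gives $O(n^{2k}p_n^{2k-1})$, i.e.\ a normalized variance $O(1/(nd))$ rather than $O(1/d)$ --- either suffices, so this is cosmetic.
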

Consequently, $\left(\frac{1}{n}F(Z^TZ)F\right)_{ij}\rightarrow m_{i+j-2}$. In the language of graph limits, $m_k$ is the homomorphism density of the length-$(k{-}1)$ path in the graphon $f$. With convergence of moments settled, we can derive conditions for invertibility of the limiting moment matrix. In the following, we provide two conditions, the first of which only uses integrals of the graphon $f$, and the second of which is based on the spectral decomposition of the graphon.
\subsection{Identification from Graphon Degree and Codegree}
We assume here that the instruments are $1,AX,A^2X$. Define the \textit{degree} function $g_1(u)=\int_v f(u,v)$, and the \textit{codegree} function $g_2(u)=\int_{v,w} f(u,v)f(v,w)$. Then the moment matrix has the following structure:
\[
\Gamma_{ZZ}=\begin{pmatrix}
        1&\int g_1&\int g_1g_2\\
        \cdot&\int g_1^2&\int g_1^2g_2\\
        \cdot&\cdot&\int g_2^2
    \end{pmatrix}.
\]
The decomposition can be verified by re-arranging the integrals: for example, the bottom right entry is
\begin{align*}
&\int_{u_1,...,u_5}f(u_1,u_2)f(u_2,u_3)f(u_3,u_4)f(u_4,u_5)\\={}&\int_{u_3}\int_{u_1,u_2}f(u_1,u_2)f(u_2,u_3)\int_{u_4,u_5}f(u_3,u_4)f(u_4,u_5)\\
={}&\int_{u}g_2(u)^2.
\end{align*}
From this representation, the following necessary and sufficient condition is immediate:
\begin{theorem}
    $\Gamma_{ZZ}$ is invertible iff $1,g_1,g_2$ are linearly independent.
\end{theorem}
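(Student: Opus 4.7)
The plan is to recognize $\Gamma_{ZZ}$ as the Gram matrix of the triple $(1,g_1,g_2)$ in $L^2([0,1])$, and then invoke the classical fact that a Gram matrix is invertible iff its generating vectors are linearly independent.

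The first step is to rewrite each entry of $\Gamma_{ZZ}$ as an $L^2$ inner product $\langle e_i,e_j\rangle = \int e_i e_j$ with $(e_1,e_2,e_3)=(1,g_1,g_2)$. The worked example immediately preceding the theorem already handles the $(3,3)$ entry: Fubini plus the symmetry of $f$ collapse the iterated integral defining $m_4$ into $\int g_2^2 = \langle g_2,g_2\rangle$. The same device, applied to the remaining entries, uses the identity $\int_v f(u,v)g_1(v)\,dv = g_2(u)$ (symmetry of $f$ together with the definition of $g_2$) and, by a further Fubini step, the reduction $\int g_2 = \int g_1^2$. After these substitutions every entry of $\Gamma_{ZZ}$ equals the corresponding $\langle e_i,e_j\rangle$, so
\[
\Gamma_{ZZ} \;=\; \bigl(\langle e_i,e_j\rangle\bigr)_{1\le i,j\le 3}.
\]

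The second step is the standard linear-algebra fact about Gram matrices: for any $v_1,v_2,v_3$ in a Hilbert space, the matrix $G$ with $G_{ij}=\langle v_i,v_j\rangle$ satisfies $c^{T}Gc = \bigl\lVert \sum_i c_i v_i \bigr\rVert^{2}$ for every $c\in\mathbb{R}^{3}$. Consequently $Gc=0$ iff $\sum_i c_i v_i = 0$, so $G$ is invertible precisely when $v_1,v_2,v_3$ are linearly independent. Specialising to $(1,g_1,g_2)$ in $L^2([0,1])$ yields the theorem.

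The only nontrivial part is the bookkeeping in the first step: one must show that each of the six distinct entries of $\Gamma_{ZZ}$ collapses, via Fubini and the symmetry of $f$, to an inner product among $\{1,g_1,g_2\}$. No single computation is hard, but the same Fubini trick (and in particular the reduction $\int g_2 = \int g_1^2$) is invoked several times, and it is easy to make an index or variable slip. Once the Gram structure is in hand, the conclusion is immediate from the classical fact.
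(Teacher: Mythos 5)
Your proposal is correct and is essentially the paper's own argument: the paper derives the representation of $\Gamma_{ZZ}$ via the path moments $m_{i+j-2}$ and declares the equivalence ``immediate,'' which is precisely the Gram-matrix fact you make explicit, including the key reductions $\int_v f(u,v)g_1(v)\,dv=g_2(u)$ and $\int g_2=\int g_1^2$. One remark: your bookkeeping gives the entries $(1,3)=\int g_2=\int g_1^2$ and $(2,3)=\int g_1 g_2$, which is what the Hankel structure $(\Gamma_{ZZ})_{ij}=m_{i+j-2}$ requires; the paper's displayed matrix shows $\int g_1g_2$ and $\int g_1^2g_2$ in those positions, which appears to be a typo rather than a substantive difference, and your corrected entries are the ones for which the Gram argument applies.
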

The definitions of $g_1$ and $g_2$ are related to the \textit{link} and \textit{codegree} functions used by \citet{auerbach2022identification} to identify a partially linear model with graphon-controlled semiparametric components. Our $g_1$ and $g_2$ are the integrals of the agent link and codegree functions as defined in Auerbach's work. The intuition is that variation in the expected degrees of each node is required for $1$ and $g_1$ to be linearly independent; furthermore, there needs to be excess variation in the number of two-hop neighbors to achieve identification. The condition is also similar to the standard assumption usually employed in linear-in-means models for identification, namely that $I,G,G^2$ are linearly independent. However, in the growing network setting, the corresponding identification assumption for linear-in-means does not imply that the limiting $\Gamma_{ZZ}$ is invertible, even if we impose the graphon assumption. This is because the row-normalization forces the constant vector to be an eigenvector of $G$, so that $GX$ and $G^2X$ limit to constant vectors when $X$ is IID.
\subsection{Identification from Spectral Variation}\label{sec:spectral}
By appealing to the spectral decomposition of a graphon, we can provide alternative conditions for identification which are natural for low-rank network models, such as the stochastic block model \citep{lee2019review}. Following the discussion from \citet{avella2018centrality}, we define the graphon operator:
\begin{definition}
    For a graphon $f$, the associated graphon operator $T_f:L^2([0,1])\rightarrow L^2([0,1])$ is defined as
    $$(T_fg)(u)=\int_v f(u,v)g(v)dv.$$
\end{definition}
Graphon operators admit a spectral decomposition:
\begin{prop}
    The graphon operator admits a decomposition
    $$T_fg=\sum_{i=1}^\infty \lambda_i \langle \phi_i,g\rangle \phi_i,$$
    where $\phi_i$ form an orthonormal basis in $L^2$ and are eigenfunctions with corresponding eigenvalues $\lambda_i$.
\end{prop}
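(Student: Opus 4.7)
The plan is to recognize the statement as the spectral theorem for compact self-adjoint operators on a separable Hilbert space, specialized to integral operators with symmetric kernel. The real work is just verifying the hypotheses of that theorem for the operator $T_f$.

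First I would check that $T_f$ is bounded and in fact Hilbert-Schmidt on $L^2([0,1])$. A graphon $f:[0,1]^2\to \mathbb{R}_{\geq 0}$ is in the standard definition bounded (and in any case square-integrable, since it parameterizes Bernoulli probabilities through $p_n f(U_i,U_j)$ and the unit square has finite measure), so $\|T_f\|_{\mathrm{HS}}^2=\int_0^1\!\int_0^1 f(u,v)^2\,du\,dv<\infty$. This makes $T_f$ Hilbert-Schmidt and hence compact.

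Second, I would establish self-adjointness. Using Fubini and the assumed symmetry $f(u,v)=f(v,u)$, for any $g,h\in L^2([0,1])$, $\langle T_f g,h\rangle=\int\!\int f(u,v)g(v)h(u)\,du\,dv=\int\!\int f(v,u)g(v)h(u)\,dv\,du=\langle g,T_f h\rangle$, so $T_f=T_f^{*}$.

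Third, I would invoke the spectral theorem for compact self-adjoint operators on the separable Hilbert space $L^2([0,1])$. This yields a countable sequence of real eigenvalues $\lambda_i$ (with $|\lambda_i|\to 0$ if infinitely many are nonzero) together with an orthonormal set of eigenfunctions $\phi_i$ whose closed linear span equals $\overline{\mathrm{range}(T_f)}=\ker(T_f)^{\perp}$. To upgrade this to an orthonormal basis of all of $L^2([0,1])$, I would adjoin any orthonormal basis of $\ker(T_f)$, assigning eigenvalue $0$ to those vectors; since zero eigenvalues contribute nothing to the sum, the expansion $T_f g=\sum_i\lambda_i\langle\phi_i,g\rangle\phi_i$ continues to hold, now with $\{\phi_i\}$ an orthonormal basis of all of $L^2([0,1])$ as claimed. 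The main (and really only) obstacle is the cosmetic distinction between the orthonormal basis of $\ker(T_f)^{\perp}$ delivered by the spectral theorem and the full orthonormal basis of $L^2$ named in the statement, which the kernel-basis augmentation above resolves.
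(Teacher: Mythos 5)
Your proof is correct and is exactly the standard argument the paper implicitly relies on: the paper states this proposition without proof, treating it as a known consequence of the spectral theorem for compact self-adjoint (Hilbert--Schmidt) integral operators, following the discussion in \citet{avella2018centrality}. Your added care on the two genuine subtleties --- that one needs $f\in L^2([0,1]^2)$ (justified here by boundedness of $p_nf$ as a Bernoulli probability, since the paper's stated ``integrable'' alone would not suffice) and that the eigenfunctions must be extended to a full orthonormal basis of $L^2$ by adjoining a basis of $\ker(T_f)$ with eigenvalue zero --- is appropriate and closes the argument.
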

With this decomposition in mind, we can write
$$m_k=\int_u (T_f^k1)(u)du=\left\langle 1,\sum_{i=1}^\infty \lambda_i^k\langle \phi_i,1\rangle \phi_i\right\rangle=\sum_{i=1}^\infty \langle \phi_i,1\rangle^2\lambda_i^k.$$
Consequently, we can write the moment matrix in the form $\sum_{i=1}^\infty a_i^2b_ib_i^T$, where $a_i=\langle \phi_i,1\rangle$ and $b_i=(1,\lambda_i,\lambda_i^2)$. We can see that this matrix is full rank if and only if there are three linearly independent $b_i$ corresponding to $a_i\neq 0$. Using the Vandermonde structure of $M$ \citep{horn2012matrix}, it follows that any set of three $b_i$'s are linearly independent if and only if the eigenvalues $\lambda_i$ are distinct. Combining these observations yields the following identification condition:
\begin{theorem}\label{thm:lis1}
    $\Gamma_{ZZ}$ is invertible if and only if the graphon operator $T_f$ has at least three distinct eigenvalues corresponding to eigenfunctions not orthogonal to $1$.
\end{theorem}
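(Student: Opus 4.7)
The plan is to reduce the \(3\times 3\) invertibility question to a linear-independence question about Vandermonde-type vectors, using the outer-product representation \(\Gamma_{ZZ} = \sum_{i=1}^\infty a_i^2 b_i b_i^T\) already derived in the text, where \(a_i = \langle \phi_i, 1 \rangle\) and \(b_i = (1, \lambda_i, \lambda_i^2)^T\). Equivalently, \(\Gamma_{ZZ} = C C^T\) where \(C\) is the (formally infinite) matrix with \(i\)-th column \(a_i b_i\), and since \(\ker(C C^T) = \ker(C^T)\) via the identity \(v^T C C^T v = \|C^T v\|^2\), invertibility of \(\Gamma_{ZZ}\) is equivalent to the collection \(\{a_i b_i\}_{i \geq 1}\) spanning \(\mathbb{R}^3\).

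For the ``if'' direction, I would pick three distinct eigenvalues \(\lambda_{i_1}, \lambda_{i_2}, \lambda_{i_3}\) each admitting an eigenfunction not orthogonal to \(1\), so \(a_{i_j} \neq 0\) for \(j = 1,2,3\). The partial sum \(M := \sum_{j=1}^{3} a_{i_j}^2 b_{i_j} b_{i_j}^T\) is PSD, and \(v^T M v = 0\) would force the degree-\(\leq 2\) polynomial \(p(\lambda) = v_0 + v_1 \lambda + v_2 \lambda^2\) to vanish at the three distinct points \(\lambda_{i_j}\), forcing \(p \equiv 0\) and hence \(v = 0\). Thus \(M \succ 0\), and because all the remaining summands in \(\Gamma_{ZZ}\) are PSD, \(\Gamma_{ZZ} \succeq M \succ 0\) is invertible. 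For the converse, suppose \(\{\lambda_i : a_i \neq 0\}\) contains at most two distinct values \(\mu_1, \mu_2\). Then every nonzero column \(a_i b_i\) of \(C\) lies in the two-dimensional span of \((1, \mu_1, \mu_1^2)^T\) and \((1, \mu_2, \mu_2^2)^T\), so \(\mathrm{rank}(C) \leq 2\) and \(\Gamma_{ZZ}\) is singular.

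The main bit of bookkeeping I would want to handle carefully is the treatment of multiplicities: the phrase ``three distinct eigenvalues corresponding to eigenfunctions not orthogonal to \(1\)'' should be parsed as ``three distinct eigenvalues \(\lambda\) for which the eigenspace \(V_\lambda\) is not contained in \(\{1\}^\perp\)''. Within a degenerate eigenspace, one can choose the orthonormal basis so that its first element is the normalized projection of \(1\) onto \(V_\lambda\), rendering the remaining basis elements orthogonal to \(1\); this reconciles the ``distinct eigenvalues'' phrasing with the ``distinct \(\lambda_i\) and nonzero \(a_i\)'' formulation used in the rank argument. Convergence of the infinite sum is not an additional obstacle, since the representation is in the Hilbert--Schmidt sense and the rank question only probes three-dimensional subspaces; the substantive content of the proof is the Vandermonde determinant step.
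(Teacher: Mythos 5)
Your proposal is correct and follows essentially the same route as the paper: the rank-one expansion $\Gamma_{ZZ}=\sum_i a_i^2 b_i b_i^T$ with $a_i=\langle\phi_i,1\rangle$, $b_i=(1,\lambda_i,\lambda_i^2)^T$, combined with the Vandermonde observation that three such $b_i$ are independent exactly when the $\lambda_i$ are distinct. Your additional bookkeeping (the $CC^T$ kernel/PSD-ordering argument for sufficiency, the rank-$\leq 2$ argument for necessity, and the treatment of degenerate eigenspaces) simply makes explicit steps the paper leaves implicit, so no substantive difference.
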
 
The theorem generalizes to the case where multiple instruments are used: to distinguish between $k+1$ values $m_0,...,m_k$, we now require $k+1$ distinct eigenvalues. 

In the following section, we provide an example of identification in stochastic block models.
\subsection{Identification in Stochastic Block Models}
The stochastic block model is a common model of communities in social networks \citep{lee2019review}. The model is parameterized by a matrix $P\in [0,1]^{K\times K}$ and vector $\pi\in [0,1]^K$, where $K$ is the number of communities, $P_{ij}$ is the probability of connection between individuals in communities $i$ and $j$, and $\pi_i$ is the probability an individual is in community $i$. For each individual, a single community membership is drawn independently, and conditioned on the memberships, edges are sampled independently of one another. 

We further define $E_{SBM}=PQ$, where $Q=\mathrm{diag}(\pi)$. The spectral properties of the graphon $f_{SBM}$ can be derived from those of the associated SBM, as shown in the following proposition:
\begin{prop}[Lemma 2 from \citet{avella2018centrality}]
    The eigenvalues of $T_{f_{SBM}}$ are in one-to-one correspondence with the eigenvalues of $E_{SBM}$.
\end{prop}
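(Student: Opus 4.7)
The plan is to exploit the block-constant structure of $f_{SBM}$ to decompose $L^2([0,1])$ into a $K$-dimensional invariant subspace on which $T_{f_{SBM}}$ acts as the matrix $E_{SBM} = PQ$, together with an orthogonal complement that $T_{f_{SBM}}$ annihilates. First, I would realize $f_{SBM}$ explicitly: partition $[0,1]$ into disjoint intervals $I_1,\ldots,I_K$ with $|I_j| = \pi_j$, and set $f_{SBM}(u,v) = P_{ij}$ when $(u,v) \in I_i \times I_j$. Let $V \subset L^2([0,1])$ be the subspace of functions constant on each $I_j$, spanned by $e_j := \mathbb{1}_{I_j}$, and let $V^\perp$ consist of functions with $\int_{I_j} g = 0$ for each $j$.

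Next I would verify the two key computations. For $g \in V^\perp$ and $u \in I_i$, $(T_{f_{SBM}} g)(u) = \sum_{j} P_{ij}\int_{I_j} g(v)\,dv = 0$, so $T_{f_{SBM}}$ kills $V^\perp$ and all nonzero eigenfunctions lie in $V$. For $e_j$ and $u \in I_i$, $(T_{f_{SBM}} e_j)(u) = \int_{I_j} P_{ij}\,dv = P_{ij}\pi_j$, so $T_{f_{SBM}} e_j = \sum_i (PQ)_{ij}\, e_i$. Hence the matrix of $T_{f_{SBM}}|_V$ in the basis $\{e_j\}$ is exactly $PQ = E_{SBM}$, and since eigenvalues are basis-independent (they are invariants of a linear map, not of an inner product structure), the spectrum of $T_{f_{SBM}}|_V$ coincides with the spectrum of $E_{SBM}$ with matching algebraic multiplicities.

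The only subtlety, and the place where one must be careful about what ``one-to-one correspondence'' means, is that $T_{f_{SBM}}$ is a compact operator on an infinite-dimensional space and therefore has $0$ in its spectrum with infinite multiplicity from $V^\perp$, while $E_{SBM}$ is a $K\times K$ matrix. The correct interpretation, which matches the use made of the proposition in Section~\ref{sec:spectral} (where only nonzero eigenvalues affect $m_k = \sum_i \langle \phi_i,1\rangle^2 \lambda_i^k$), is that the nonzero eigenvalues correspond bijectively with multiplicity, and that any zero eigenvalues of $E_{SBM}$ are absorbed into the infinite-dimensional kernel $V^\perp$. Once the block decomposition $L^2([0,1]) = V \oplus V^\perp$ is in hand, the argument is essentially a one-line matrix computation, so there is no serious obstacle beyond pinning down this bookkeeping.
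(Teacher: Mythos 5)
Your argument is correct: the block-constant structure gives the invariant decomposition $L^2([0,1]) = V \oplus V^\perp$, the operator annihilates $V^\perp$, and on $V$ it acts in the basis $\{\mathbb{1}_{I_j}\}$ as $PQ = E_{SBM}$, so the nonzero spectra (with multiplicity) coincide; your reading of ``one-to-one correspondence'' as pertaining to nonzero eigenvalues, with $0$ having infinite multiplicity on the operator side, is the right one and is all that is used in Section~\ref{sec:spectral}. Note that the paper does not prove this statement at all — it is imported as Lemma 2 of \citet{avella2018centrality} — and your direct computation is essentially the standard argument given there, so there is no substantive divergence to report; the only housekeeping worth adding is the (harmless) assumption $\pi_j>0$ so that each $\mathbb{1}_{I_j}$ is a nonzero element of $L^2$.
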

Thus for the SBM, we have identifiability if and only if $E_{SBM}$ has at least 3 distinct eigenvalues corresponding to eigenvectors not orthogonal to $1$. We can expect this to be true for most values of $E_{SBM}$ when the number of communities, $K$, is at least $3$. Identification can fail because there are fewer than 3 distinct eigenvalues, but it can also fail because the number of eigenvectors which have nonzero inner product with the constant vector, $1$, is too small. For example, consider a disconnected community matrix,
\[
E_{SBM}=\frac{1}{3}
\begin{pmatrix}
    1&0&0\\
    0&1&\frac{1}{2}\\
    0&\frac{1}{2}&1
\end{pmatrix}.
\]
The eigenvalues, $1/3$, $5/6$, and $1/6$, are distinct, and $1$ is not an eigenvector, but the last eigenvector $(0,1,-1)^T$ is orthogonal to 1. It turns out that even by enforcing strict positivity of entries and positive-definiteness of $E_{SBM}$, one can numerically construct values of $E_{SBM}$ that satisfy the distinct eigenvalue condition but not the orthogonality condition, such as the following matrix for which the eigenvalues are distinct, but the bottom eigenvector is orthogonal to $1$:
\[
\begin{pmatrix}
0.2321 & 0.0718 & 0.0295 \\
0.0718 & 0.0728 & 0.0287 \\
0.0295 & 0.0287 & 0.0618
\end{pmatrix}
\]
However, violation of the identifiability condition is still an edge case, as it is generally restored after a small perturbation of $E_{SBM}$.
\subsection{Instrument Relevance Condition}
Aside from the condition that $\Gamma_{ZZ}$ is full-rank, we also need the condition that $\Gamma_{ZW}$ is full-rank. This is the instrument relevance condition, namely, that the instrument $A^2X$ is correlated with the endogenous $AY$. We argue that the conditions above are sufficient. First, we note that from calculations involving the sampled graphon (i.e., Theorem 2 from \citet{avella2018centrality}), moments of the form $X^TA^mAY$ behave as
\begin{align*}
    \frac{1}{nd^{m+1}}X^TA^mAY&\approx \frac{1}{nd^{m+1}}\mu1^TA^{m+1}(I-\rho A)^{-1}(\alpha 1+\mu\beta 1+\mu\delta A1 + \varepsilon)\\
    &\rightarrow \sum_{k=1}^\infty a_k\mu\lambda_k^{m+1}(\alpha+\mu(\beta+\delta_0\lambda_k))\frac{1}{1-\rho_0 \lambda_k}\\
    &=: \mu \sum_{k=1}^\infty a_k\lambda_k^mh(\lambda_k),
\end{align*}
where $h(\lambda)=\frac{1}{1-\rho_0 \lambda}\lambda(\alpha+\mu\beta+\mu\delta_0\lambda)$, and $a_k$ and $\lambda_k$ are as defined in Section \ref{sec:spectral}. Thus with the rescaling matrix definitions of $F$ and $H$ provided earlier, we have
\begin{align*}
\Gamma_{ZW}= \begin{pmatrix}
\sum_ia_i^2&\sum_ia_i^2\lambda_i&\sum_ia_i^2h(\lambda_i)\\
\sum_ia_i^2\lambda_i &\sum_ia_i^2\lambda_i^2&\sum_ia_i^2\lambda_ih(\lambda_i)\\
\sum_ia_i^2\lambda_i^2&\sum_ia_i^2\lambda_i^3&\sum_ia_i^2\lambda_i^2h(\lambda_i)
\end{pmatrix}.
\end{align*}
This limiting matrix can be written as $BC^T$, where $B$ has $b_k:=(1,\lambda_k, h(\lambda_k))$ as columns and $C$ has $(1,\lambda_k,\lambda_k^2)$ as columns. Under the conditions in Theorem \ref{thm:lis1}, $C$ is full rank. Subsequently, identification depends on the rank of $B$. We now show that the same conditions generally suffice for invertibility of $\Gamma_{ZW}$, and so under the conditions of Theorem \ref{thm:lis1}, the 2SLS estimator of the LIS model converges at a standard rate.
\begin{theorem}
Under the same conditions as Theorem \ref{thm:lis1}, if $(\alpha+\mu\beta)\rho_0+\mu\delta_0\neq 0$, then $\Gamma_{ZW}$ is invertible, and so two-stage least squares is consistent at a standard rate.
\end{theorem}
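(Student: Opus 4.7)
The plan is to reduce the invertibility question for $\Gamma_{ZW}$ to strict convexity of the scalar function $h$ on the spectrum of $T_f$, and then apply the Cauchy--Binet formula. I would begin by writing $\Gamma_{ZW} = CDB^T$, where $C$ and $B$ are the $3\times K$ matrices with columns $c_k=(1,\lambda_k,\lambda_k^2)^T$ and $b_k=(1,\lambda_k,h(\lambda_k))^T$ respectively, and $D=\mathrm{diag}(a_k^2)$. By Cauchy--Binet,
\begin{equation*}
\det(\Gamma_{ZW})=\sum_{|S|=3}\Big(\prod_{k\in S}a_k^2\Big)\det(C_S)\det(B_S),
\end{equation*}
so it suffices to show the summands share a common sign and at least one is nonzero.

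The next step is to analyze $h$ and pin down its convexity. A short calculation rewrites
\begin{equation*}
h(\lambda)=(\alpha+\mu\beta)\lambda+K\cdot\frac{\lambda^2}{1-\rho_0\lambda},\qquad K:=(\alpha+\mu\beta)\rho_0+\mu\delta_0,
\end{equation*}
so the hypothesis $K\neq 0$ makes the nonlinear piece nontrivial. Differentiating twice gives $(d^2/d\lambda^2)[\lambda^2/(1-\rho_0\lambda)]=2/(1-\rho_0\lambda)^3$. The stability condition $|\rho|<1/\lambda_1(A)$ together with $\lambda_1(A)=\Theta(d)$ and $d\rho\to\rho_0$ implies $|\rho_0\lambda_k|<1$ for every eigenvalue $\lambda_k$ of $T_f$, so $1-\rho_0\lambda_k>0$ uniformly and all eigenvalues lie in a single connected component where the second derivative has constant sign. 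Consequently $h$ is strictly convex on this interval when $K>0$ and strictly concave when $K<0$; the degenerate case $\rho_0=0$ collapses $h$ to a nontrivial quadratic with leading coefficient $\mu\delta_0=K\neq 0$, which is again strictly convex or concave.

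Now I would read off signs in the Cauchy--Binet expansion. Sort $S=\{k_1,k_2,k_3\}$ so that $\lambda_{k_1}<\lambda_{k_2}<\lambda_{k_3}$. Then $\det(C_S)>0$ by positivity of the sorted Vandermonde, and
\begin{equation*}
\det(B_S)=h(\lambda_{k_1})(\lambda_{k_3}-\lambda_{k_2})-h(\lambda_{k_2})(\lambda_{k_3}-\lambda_{k_1})+h(\lambda_{k_3})(\lambda_{k_2}-\lambda_{k_1})
\end{equation*}
is a scaled second divided difference of $h$, whose sign matches that of $K$. Reordering the columns of $S$ applies the same signed permutation to $C_S$ and $B_S$, so the product $\det(C_S)\det(B_S)$ is invariant under reordering. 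Subsets with repeated $\lambda_k$ contribute zero by column repetition, and subsets meeting some $a_k=0$ vanish through the weight $\prod a_k^2$. Theorem~\ref{thm:lis1}'s hypothesis supplies at least three distinct eigenvalues with nonzero $a_k$; the corresponding $S$ yields a strictly nonzero summand, and every nonzero summand shares the sign of $K$, so $\det(\Gamma_{ZW})\neq 0$.

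With $\Gamma_{ZZ}$ invertible by Theorem~\ref{thm:lis1} and $\Gamma_{ZW}$ now invertible (hence also $\Gamma_{WZ}=\Gamma_{ZW}^T$), Assumption~\ref{assumption:lis-moments} is satisfied and the preceding 2SLS asymptotic normality proposition in the subsection delivers $\sqrt{n}H^{-1}(\hat\theta_{2SLS}-\theta)\to N(0,(\Gamma_{WZ}\Gamma_{ZZ}^{-1}\Gamma_{ZW})^{-1})$, which is the standard root-$n$ rate for the rescaled coefficients. I expect the main obstacle to be the convexity bookkeeping: one must verify that stability really forces $1-\rho_0\lambda_k>0$ for every (possibly negative) eigenvalue and handle $\rho_0=0$ as a separate case; once this is done, the sign argument via Cauchy--Binet is routine but requires care to check that the column-permutation signs of $\det(C_S)$ and $\det(B_S)$ always cancel.
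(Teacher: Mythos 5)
Your proposal is correct, and its analytic heart is exactly the paper's: rewrite $h$ so that $h''(\lambda)=2K/(1-\rho_0\lambda)^3$ with $K=(\alpha+\mu\beta)\rho_0+\mu\delta_0\neq 0$, and use positivity of $1-\rho_0\lambda$ on the relevant interval to conclude $h$ is strictly convex or strictly concave. Where you diverge is in how convexity is converted into invertibility. The paper argues by contradiction: if the three vectors $(1,\lambda_i,h(\lambda_i))$ for three distinct eigenvalues with $a_i\neq 0$ were linearly dependent, the three points $(\lambda_i,h(\lambda_i))$ would be collinear, which a strictly convex or concave graph forbids; it then treats invertibility of $\Gamma_{ZW}=BC^T$ as following from full rank of the two factors. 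You instead expand $\det(\Gamma_{ZW})$ for the full weighted sum $\sum_k a_k^2 b_k c_k^T$ via Cauchy--Binet, identify $\det(B_S)$ as a sorted Vandermonde times a second divided difference of $h$, and observe that every $3\times 3$ minor product carries the sign of $K$ while at least one is strictly nonzero. This buys you something the paper glosses over: for a weighted biclinear sum over infinitely many indices, rank-three of each factor alone does not in general force the product to be nonsingular (minors could cancel), and your sign-consistency argument is precisely what rules out such cancellation; the paper's line-intersection argument is shorter and more geometric but leaves that aggregation step implicit. Both routes then conclude identically, feeding invertibility of $\Gamma_{ZW}$ (with $\Gamma_{ZZ}$ handled by Theorem \ref{thm:lis1}) into the 2SLS asymptotic normality proposition to get the standard rate for the rescaled coefficients.
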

\begin{proof}
    It suffices to show that if there are three distinct eigenvalues $\lambda_i$ with corresponding $a_i^2\neq 0$, then the three vectors of the form $(1,\lambda_i, h(\lambda_i))$ are linearly independent. Suppose the contrary. Then there exist $\alpha, \beta$ so that $\alpha+\beta\lambda_i=h(\lambda_i)$ for each $i$. This implies $\frac{h(\lambda_i)-h(\lambda_j)}{\lambda_i-\lambda_j}=\beta$ for each pair of the three $\lambda_i$. That is, $(\lambda_1,h(\lambda_1))$, $(\lambda_2,h(\lambda_2))$, and $(\lambda_3,h(\lambda_3))$ are all distinct points on the same line in $\mathbb{R}^2$. We will argue that $h$ is strictly convex or strictly concave on its domain, $(-\frac{1}{\rho_0},\frac{1}{\rho_0}).$ Since any line can intersect the graph of a strictly convex or strictly concave function at most twice, this yields the contradiction.

    By taking the second derivative of $h$, we arrive at
    $$h''(\lambda)=\frac{2((\alpha + \mu\beta)\rho_0 + \mu\delta_0)}{(1-\rho_0 \lambda)^3}.$$
    Since $(1-\rho_0 \lambda)^3$ is positive over the domain and the numerator is non-zero, $h$ is strictly convex or strictly concave on its domain.
\end{proof}
\section{Simulations}\label{sec:wi-simulations}
In this section, we validate our theory through a variety of numerical experiments.
\subsection{Random Graph}
We compare the behaviors of OLS and 2SLS for an Erd\H{o}s--R\'enyi graph. For each $n=100,200,...,2000$, we sample an Erd\H{o}s--R\'enyi random graph with expected degree $d$, where $d=O(n^{1/2}),O(n^{1/4})$. The constant is chosen so that for $n=100$, the expected degree is $10$. We fix $\alpha=1$, $\beta=1.5$, $\delta=0.6$, $\rho=0.3$. The $X_i$ are drawn from a normal distribution with mean $2$ and variance $1$, and the $\varepsilon_i$ are drawn from a normal distribution with standard deviation $0.1$. The resulting values of $\hat{\delta}-\delta$ and $\hat{\rho}-\rho$, along with 95 percent pointwise confidence bands, are plotted in Figures \ref{fig:er1} and \ref{fig:er2}.
\begin{figure}[t]
    \centering
    \includegraphics[trim={0 0 0 1.5cm}, clip, scale=0.3]{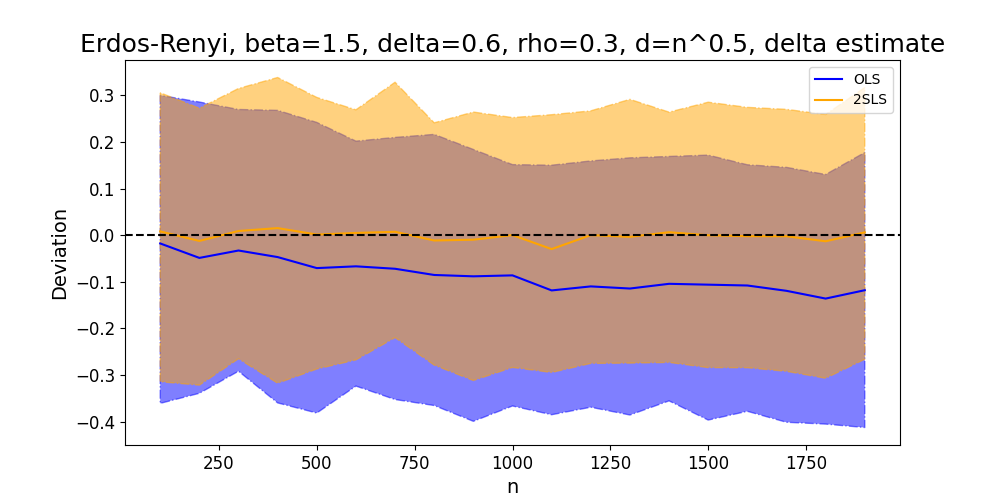}
    \includegraphics[trim={0 0 0 1.5cm}, clip, scale=0.3]{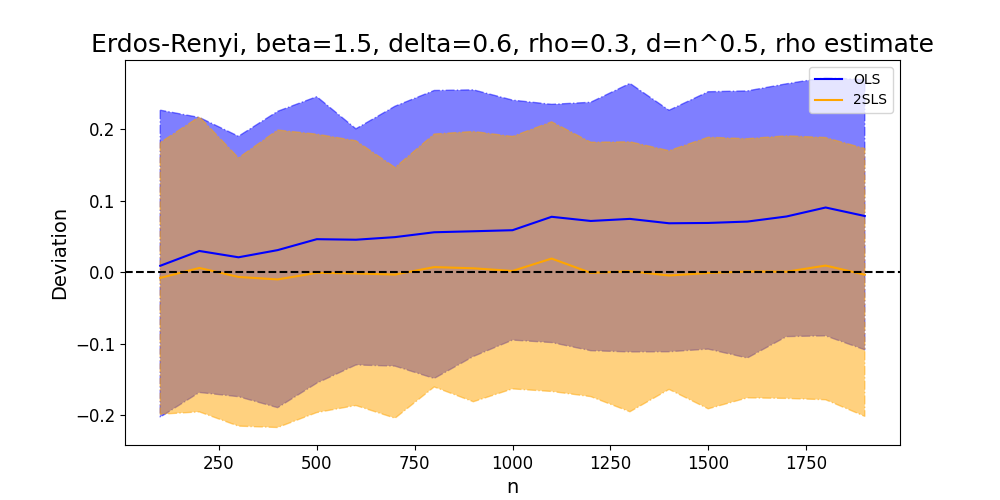}
    \caption{Error of OLS and 2SLS estimators of $\delta$ (left) and $\rho$ (right) for Erd\H{o}s--R\'enyi graph with average degree $d=n^{1/2}$. The mean errors for the estimators are highlighed in solid lines.}
    \label{fig:er1}
\end{figure}
\begin{figure}[t]
    \centering
    \includegraphics[trim={0 0 0 1.5cm}, clip, scale=0.3]{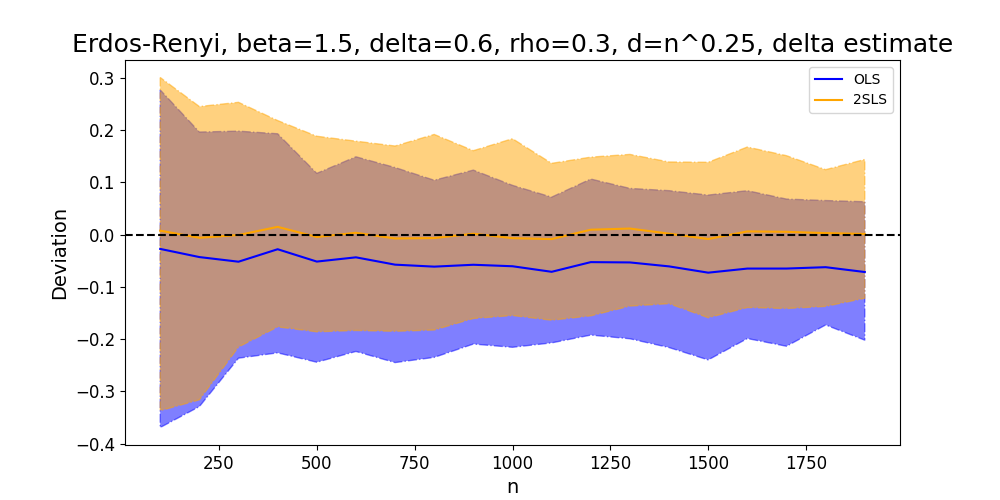}
    \includegraphics[trim={0 0 0 1.5cm}, clip, scale=0.3]{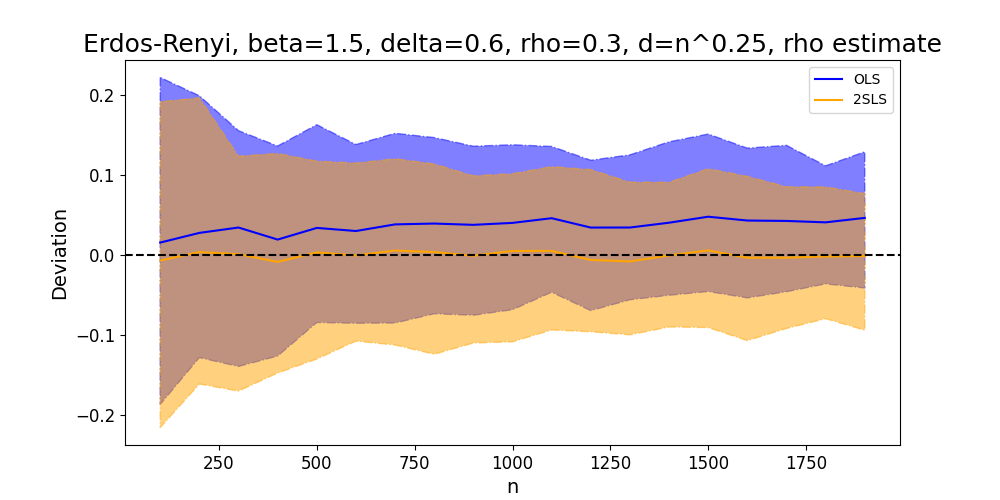}
    \caption{Error of OLS and 2SLS estimators of $\delta$ (left) and $\rho$ (right) for Erd\H{o}s--R\'enyi graph with average degree $d=n^{1/4}$.}
    \label{fig:er2}
\end{figure}
From these figures, it is evident that there is a nonzero bias in the OLS estimates for $\delta$ and $\rho$, as predicted by Theorem \ref{thm:ols-formal}. The sign of the bias is predictable from the expressions we derived, i.e., it is negative for the contextual effect and positive for the endogenous effect when $\rho>0$. We can see in Figure \ref{fig:er2} that the estimator error is converging to the bias. We expect this to be true as well in Figure \ref{fig:er1} for the denser graph $d=n^{1/2}$, but we believe the sample size is too small to see the convergence, since the bias is still increasing significantly at $n=2000$.

In contrast, the 2SLS estimator does not have bias, i.e., the orange line meets the dashed black line at zero. For $d=n^{0.5}$, the estimate for $\rho$ appears to maintain a constant amount of variation. This agrees with our analysis, which suggested that for random graphs, $d\gtrsim \sqrt{n}$ is the threshold where $\hat{\rho}_{2SLS}$ is inconsistent and has a non-degenerate limiting distribution. For moderate values of density $d=n^{1/4}$, we can see that the estimates from 2SLS converge at a rate slower than $\sqrt{n}$.
\subsection{OLS when $\beta=0$}
In the discussion after Theorem \ref{thm:ols-formal}, we suggested a surprising result that when $\beta=0$, OLS could be asymptotically unbiased for $\delta$ if the graph structure is not highly clustered. In Figure~\ref{fig:er_zero}, we validate that this is indeed the case: for random graphs, as we described, OLS appears to be asymptotically unbiased for $\delta$ and closely matches the behavior of 2SLS. However, the estimate for $\rho$ still admits a non-zero bias.
\begin{figure}[t]
    \centering
    \includegraphics[trim={0 0 0 1.5cm}, clip, scale=0.3]{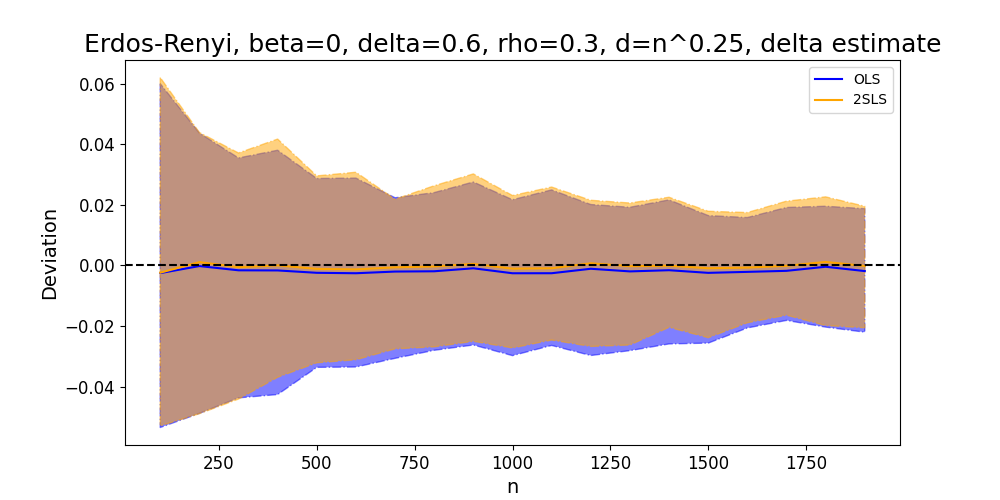}
    \includegraphics[trim={0 0 0 1.5cm}, clip, scale=0.3]{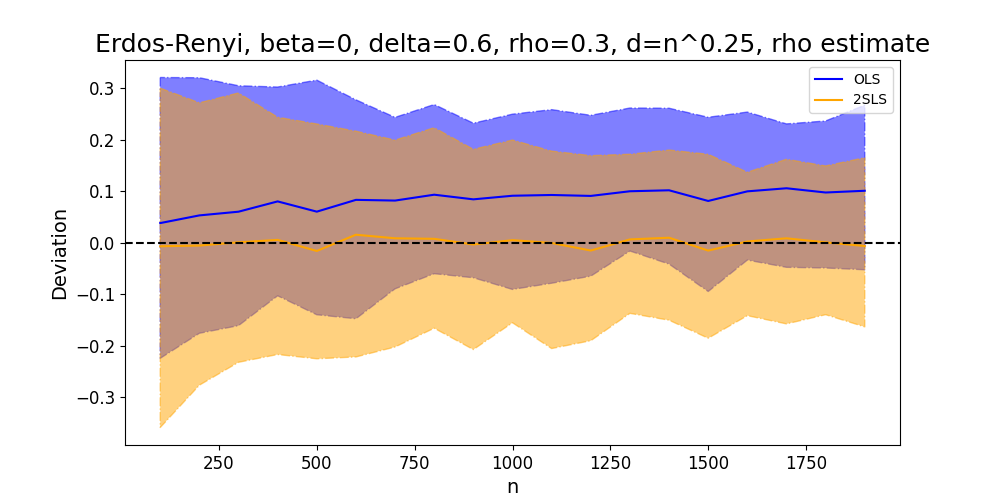}
    \caption{Error of OLS and 2SLS estimators of $\delta$ (left) and $\rho$ (right) for Erd\H{o}s--R\'enyi graph with average degree $d=n^{1/4}$ when $\beta=0$.}
    \label{fig:er_zero}
\end{figure}
\subsection{Complete Bipartite Graphs}
For this experiment, we generated a disjoint union of $d$-regular complete bipartite graphs, with $n$ nodes total. This union of bipartite cliques has different properties from random graphs: for random graphs, $\Tr(A^3)=\Theta(d^3)$ and $\Tr(A^4)=\Theta(d^4)$, while for this ensemble, $\Tr(A^3)=0$ and $\Tr(A^4)=\Theta(nd^3)$, i.e., it has a minimal number of triangles and maximal number of 4-cycles. Our theory suggests that when $\beta=0$, the OLS estimate of $\delta$ is still biased, while the 2SLS estimate of $\rho$ enjoys an improved convergence rate of $O\left(\sqrt\frac{d}{n}\right)$.

\begin{figure}
    \centering
    \includegraphics[trim={0 0 0 1.5cm}, clip, scale=0.3]{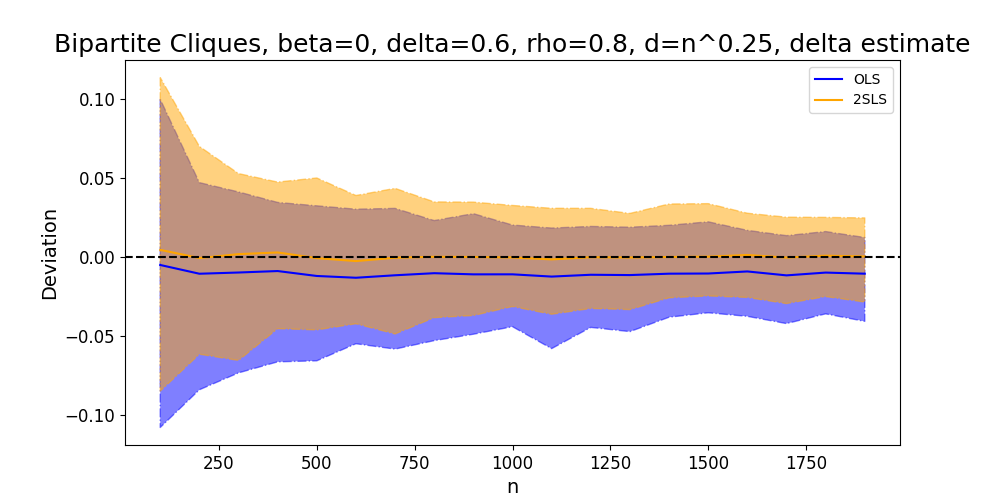}
    \includegraphics[trim={0 0 0 1.5cm}, clip, scale=0.3]{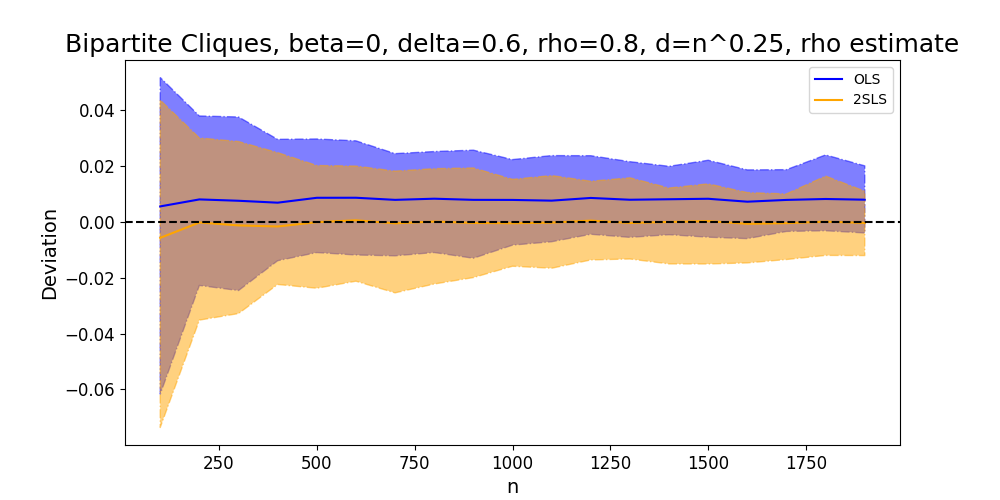}
    \caption{Error of OLS and 2SLS estimators of $\delta$ (left) and $\rho$ (right) for the union of complete bipartite graphs with average degree $d=n^{1/4}$ when $\beta=0$.}
    \label{fig:bipartite-clique}
\end{figure}
Figure \ref{fig:bipartite-clique} demonstrates that this is indeed the case. Although OLS estimation of $\delta$ was asymptotically unbiased for random graphs when $\beta=0$, for the union of complete bipartite graphs, there is a non-vanishing bias term. On the other hand, $\hat{\rho}_{2SLS}$ enjoys an improved convergence rate, as compared visually with Figure \ref{fig:er_zero}. This example highlights the complexities of the linear-in-means model: the parameter values and graph structure can affect OLS and 2SLS estimators differently.
\subsection{Cliques}
Finally, we consider the case where the network is a disjoint union of $d$-cliques. In this case, our theory for OLS estimation still holds, and we expect that the estimator will converge to a biased solution. However, for the 2SLS theory, we prohibited such networks in our identifiability assumption. In Figures \ref{fig:clique-ols} and \ref{fig:clique-2sls}, we plot the results of estimating the linear-in-means model in this regime. We can see that the OLS estimates, while biased, are stable. However, the 2SLS estimates are wildly unstable. This is to be expected: it is known that Manski's reflection problem can be solved from an identification perspective by excluding oneself from their own neighborhood for group averaging \citep{bramoulle2020peer}, but standard identification does not imply strong identification in our asymptotic framework. The weak instrument problems thus lead to highly unreliable estimates.
\begin{figure}[t]
    \centering
    \includegraphics[trim={0 0 0 1.5cm}, clip, scale=0.3]{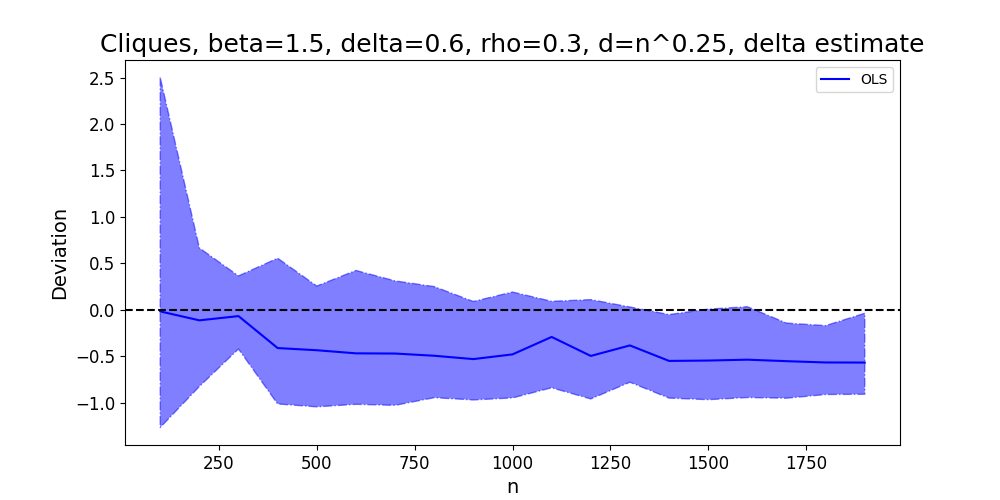}
    \includegraphics[trim={0 0 0 1.5cm}, clip, scale=0.3]{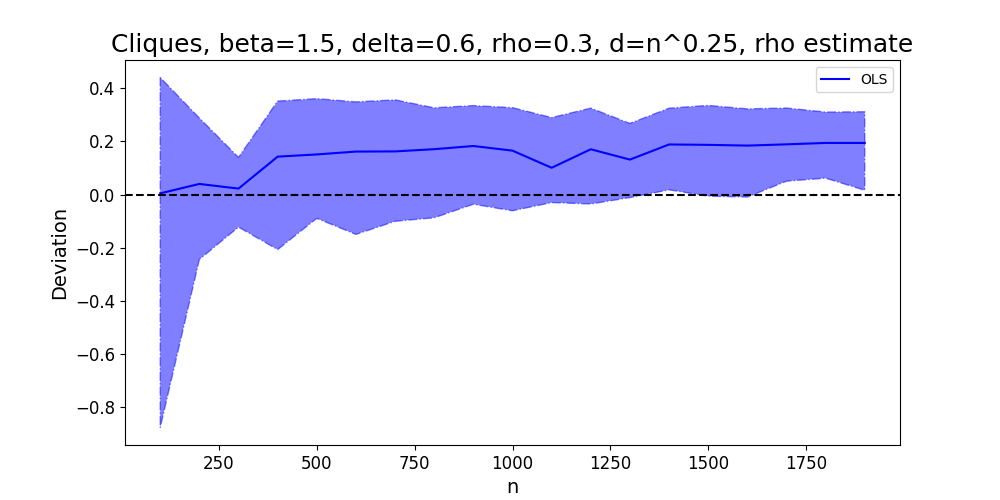}
    \caption{Error of OLS estimator of $\delta$ (left) and $\rho$ (right) for the union of cliques with average degree $d=n^{1/4}$.}
    \label{fig:clique-ols}
\end{figure}
\begin{figure}
    \centering
    \includegraphics[trim={0 0 0 1.5cm}, clip, scale=0.3]{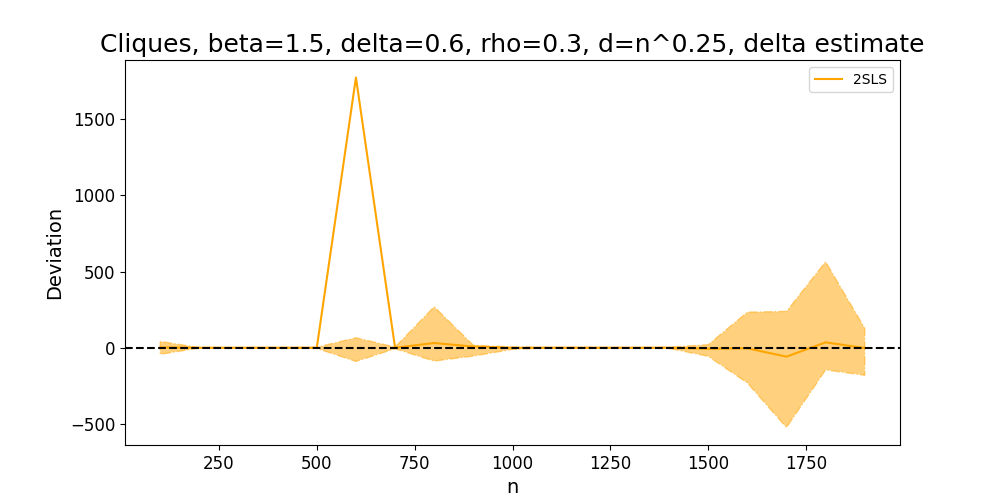}
    \includegraphics[trim={0 0 0 1.5cm}, clip, scale=0.3]{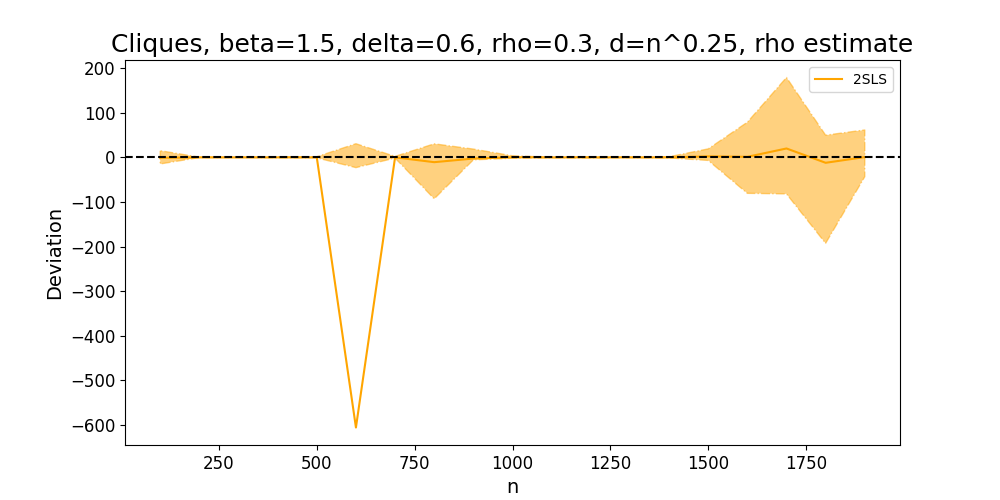}
    \caption{Error of 2SLS estimator of $\delta$ (left) and $\rho$ (right) for the union of cliques with average degree $d=n^{1/4}$.}
    \label{fig:clique-2sls}
\end{figure}

\section{Conclusion}\label{sec:wi-conclusion}
In this work, we investigated the phenomenon of weak identification in the popular linear-in-means model of peer effects. By concentrating our analysis on the network contextual and endogenous peer effects, we showed that the behavior of OLS and 2SLS estimators can be complex when the degrees of the network are allowed to grow: the model parameters and network structure can contribute jointly to produce biases or slower rates of convergence. We also showed that this weak identification is generally not a problem in the linear-in-sums model: for a large class of random graphs, the linear-in-sums model is strongly identified. Our results indicate that the linear-in-sums model can be efficiently estimated in settings where linear-in-means models suffer from slow rates.

One broader theme that is reflected in this work is the difference between OLS and instrumental variable estimation methods. The weak identification literature has provided diagnostics to determine whether or not an instrument can reliably used in IV estimation~\citep{staiger1994instrumental}. For the linear-in-means model, while instrumental variables generally outperforms ordinary least-squares, when there truly is weak identification, as in the case of the union of cliques, instrumental variables estimation is unreliable. In such a setting, OLS is biased, but can still potentially be used to say something informative about the parameters of interest if the bias is small or known to be in a particular direction.

There are a few interesting possible extensions to this work. The asymptotic framework we studied corresponds to the \textit{infill asymptotic} setting, where a single network is observed. In contrast, some applications fit in the \textit{increasing-domain} asymptotic framework, where many isolated communities, such as villages, are observed. In such cases, it is common to incorporate village-level fixed effects into the model. This creates an incidental parameter problem that is often resolved by differencing. Since this procedure removes useful information from the data, a stronger condition based not only on triangles, but also on fourth-order motifs, is required to ensure identification \citep{bramoulle2009identification}. Once this differencing is performed, identification has been shown to be impossible in both complete graphs and complete bipartite graphs. Similarly, we would expect that the identification conditions for 2SLS in our setup would need to change in order to limit the number of triangles and 4-cycles; it is likely that the estimation rate would then depend on quantities such as $\|G^3\|_F$ instead. We leave this extension for future work.

Additionally, it would be interesting to see how the results extend to networks with degree heterogeneity, as discussed in the related work. Degree heterogeneity is known to be widespread but problematic in other areas of statistical network analysis, such as community detection, as it slows down concentration of useful random matrix quantities \citep{ke2024optimal}. However, this could potentially be beneficial for the weak instrument problem, since rapid averaging, or concentration, is the cause of weak instruments in our setup. Nevertheless, degree heterogeneity poses other problems for inference: it may be the case that the limiting covariance matrix depends on the values of covariates for specific individuals due to concentration of influence \citep{acemoglu2012network,halpern2021defense}. This would make inference non-standard when interpreting regressors as random, as is commonly assumed in econometrics and statistics \citep{abadie2020sampling,buja2019models}. Studying the impact of degree heterogeneity seems to be a complex but worthwhile endeavor that can help us understand how the theory extends to more practically relevant network models.

Another question is whether or not the estimation rates derived are fundamental lower bounds over the class of all estimators. In particular, we can ask if there are estimators which dominate the two-stage least squares estimator. It is known that in the strongly-identified case, there is a three-stage procedure which is efficient among estimators using $GX,G^2X,...$ as instruments \citep{lee2003best,zhang2010more}. How do these estimators perform in weakly identified setups? Additionally, it may be possible for regularized estimators to outperform their unregularized counterparts in this setup, as suggested in some related work on weakly identified models~\citep{knight2008shrinkage,tchuente2019weak}.

Finally, it would be worthwhile to explore further connections with the weak instruments literature. A large body of work has been devoted to developing finite-sample approximations of estimator distributions \citep{staiger1994instrumental,bound1995problems,andrews2019weak} or devising robust testing procedures which have the correct size regardless of the strength of identification \citep{mikusheva2010robust}. In settings where peer effects are weakly identified, can these procedures be used to improve estimates? Developing improved techniques for inference could help increase the credibility of peer effects research.
\section{Acknowledgment}
This work was supported by ARO MURI W911 NF-19-1-0217 and a
Vannevar Bush Fellowship from the Office of the Under Secretary of
Defense.
\begin{appendix}
\section{Appendix}\label{appn} 
\subsection{Helpful Auxiliary Results}
Throughout the proofs, we will make use of the following observation, which is trivial by considering the symmetry of $A$ and the bound of $\Theta(\frac{1}{d})$ on the non-zero entries of $G$:
\begin{lemma}\label{lemma:order-equivalence}
    If $B$ is a product of $k$ terms of the form $G$ and $G^T$, and $G$ satisfies  Assumption \ref{assumption:network-reg}, then $\Tr(B)=\Theta_P(\Tr(G^k))$.
\end{lemma}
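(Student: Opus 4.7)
The plan is to expand both $\Tr(B)$ and $\Tr(G^k)$ as sums of products of matrix entries indexed by closed walks of length $k$ in the underlying graph, and then to observe that, under near-degree regularity, every non-zero contribution has the same order of magnitude regardless of whether the factors are $G$ or $G^T$. Writing $B = G^{(a_1)} G^{(a_2)} \cdots G^{(a_k)}$ with each $G^{(a_s)} \in \{G, G^T\}$, both traces admit the representation
$$
\Tr(B) = \sum_{i_0, i_1, \ldots, i_{k-1}} \prod_{s=1}^{k} G^{(a_s)}_{i_{s-1} i_s}, \qquad \Tr(G^k) = \sum_{i_0, i_1, \ldots, i_{k-1}} \prod_{s=1}^{k} G_{i_{s-1} i_s},
$$
with the cyclic convention $i_k = i_0$.

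The first step is to argue that the two sums have the same support. Since $G_{ij} = A_{ij}/d_i$ and $G^T_{ij} = A_{ji}/d_j$, the symmetry $A_{ij} = A_{ji}$ implies that both $G_{ij}$ and $G^T_{ij}$ are non-zero exactly when the edge $\{i,j\}$ appears in $A$. Hence the tuples $(i_0, \ldots, i_{k-1})$ giving non-zero contributions are precisely the closed walks of length $k$ in $A$, and this index set is the same for both expansions.

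The second step is to control the magnitude of each surviving summand. Under near-degree regularity, every $d_i$ satisfies $d_i = \Theta(d)$, so each non-zero entry of $G$ or $G^T$ equals $1/d_j$ for some $j$ and is therefore $\Theta(1/d)$. Each non-zero summand is a product of $k$ such entries and hence equals $\Theta(1/d^k)$. It follows that both $\Tr(B)$ and $\Tr(G^k)$ are $\Theta(N_k/d^k)$, where $N_k$ is the number of closed walks of length $k$ in $A$, and the conclusion $\Tr(B) = \Theta_P(\Tr(G^k))$ drops out immediately, the $\Theta_P$ reflecting any randomness in $A$.

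The only subtlety, which I would flag as the main obstacle, is that the lemma as stated only invokes Assumption \ref{assumption:network-reg} (bounded column sums), whereas the argument above leverages the uniform lower and upper $\Theta(1/d)$ bound on non-zero entries that really follows from the stronger near-degree regularity of Assumption \ref{assumption:regular}. The hint ``the bound of $\Theta(\frac{1}{d})$ on the non-zero entries of $G$'' makes clear that the intended reading of the lemma operates under near-degree regularity, which is the assumption consistently in force in the main results of the paper; under the weaker Assumption \ref{assumption:network-reg} alone, one could at best get one-sided comparison of the two traces via row/column sum bounds, not the two-sided $\Theta_P$ relation.
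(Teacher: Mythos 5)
Your proof is correct and follows essentially the same route as the paper: expand the trace as a sum over closed $k$-walks, note that symmetry of $A$ makes the sparsity pattern of $G^T$ identical to that of $G$, and use the $\Theta(1/d)$ bound on nonzero entries to conclude both traces are $\Theta_P$ of (number of $k$-cycles)$/d^k$. Your remark about the assumption is also apt, since the paper's own proof invokes the degree bound that really comes from the near-degree regularity condition in force throughout the analysis.
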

\begin{proof}
    Note that 
    \begin{align}
        \Tr(G^k)&=\sum_{c\in C_k}\prod_{(u,v)\in c}\frac{1}{d_u}=\Theta_P\left(\frac{|C_k|}{d^k}\right),
    \end{align}
    where $C_k$ is the set of all $k$-cycles and we have used Assumption \ref{assumption:network-reg} to bound the order of the reciprocal degrees. Since the sparsity pattern of $G^T$ is the same as that of $G$ by symmetry, it follows that $B$ is of the same order, $\Theta\left(\frac{|C_k|}{d^k}\right)$.
\end{proof}
\begin{lemma}\label{lemma:col-sums}
    For matrices $A,B$ with bounded row and column sums, $AB$ has bounded row and column sums. Additionally, $\|A\|_2$ is of constant order.
\end{lemma}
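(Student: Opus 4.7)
The plan is to prove the two claims separately using elementary matrix norm inequalities. Write $\|A\|_\infty := \max_i \sum_j |A_{ij}|$ for the maximum absolute row sum and $\|A\|_1 := \max_j \sum_i |A_{ij}|$ for the maximum absolute column sum. Both are operator norms, and the hypothesis says $\|A\|_\infty, \|A\|_1, \|B\|_\infty, \|B\|_1 = O(1)$.

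For the first claim, I would fix a row $i$ of $AB$ and compute
\[
\sum_j |(AB)_{ij}| = \sum_j \Bigl|\sum_k A_{ik} B_{kj}\Bigr| \leq \sum_k |A_{ik}| \sum_j |B_{kj}| \leq \|B\|_\infty \sum_k |A_{ik}| \leq \|A\|_\infty \|B\|_\infty,
\]
using the triangle inequality, swapping the order of summation, and pulling out $\|B\|_\infty$. This gives $\|AB\|_\infty \leq \|A\|_\infty \|B\|_\infty = O(1)$. The column sum bound follows by the symmetric computation, or alternatively by applying the row-sum bound to $B^T A^T$ and using $\|M^T\|_\infty = \|M\|_1$.

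For the second claim, I would use the standard sub-multiplicative interpolation inequality
\[
\|A\|_2 \leq \sqrt{\|A\|_1 \, \|A\|_\infty},
\]
which can be proved directly from $\|A\|_2^2 = \|A^T A\|_2 \leq \|A^T A\|_\infty \leq \|A^T\|_\infty \|A\|_\infty = \|A\|_1 \|A\|_\infty$, or cited as a well-known fact (Riesz-Thorin). Under the hypothesis both factors on the right are $O(1)$, so $\|A\|_2 = O(1)$.

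There is essentially no obstacle here: the argument is a two-line computation plus a standard norm inequality. The only subtlety worth noting is that in applications throughout the paper, the matrices $G, S(\rho)^{-1}, H$ are not necessarily entrywise non-negative (e.g., $S(\rho)^{-1} = \sum_{k\geq 0}\rho^k G^k$ could have mixed signs if $\rho<0$, and $H$ involves $\beta,\delta$), so using absolute row and column sums in the definitions, as above, is the correct formulation and ensures the lemma applies in all the places it is invoked.
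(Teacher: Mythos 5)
Your proposal is correct and follows essentially the same route as the paper: the row/column-sum bound for $AB$ is the same submultiplicativity computation, and your inequality $\|A\|_2^2 \leq \|A^TA\|_\infty \leq \|A\|_1\|A\|_\infty$ is just the explicit form of the paper's appeal to Gershgorin applied to $A^TA$ with bounded row sums. Your remark about using absolute row and column sums is a sensible refinement (the paper's $e_iAB1$ computation implicitly treats non-negative entries), but it does not change the substance of the argument.
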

\begin{proof}
    The row sums are of the form $e_iAB1\leq c_1 e_iA1\leq c_2$ for some constants $c_1,c_2$, where $e_i$ is the $i$-th canonical basis vector, and similarly for column sums. The bound on the largest singular value is a consequence of the Gershgorin circle theorem applied to $A^TA$, which has bounded row and column sums.
\end{proof}
\subsection{Proofs for Section \ref{section:network-asymptotics}}
\begin{proof}[Proof of Proposition \ref{prop:rates}]
For $\|G\|_F^2$, we can compute $$\|G\|_F^2=\sum_{i=1}^n\sum_{j=1}^nG_{ji}^2=\sum_{j=1}^n\sum_{i\in N(j)}\frac{1}{d_j^2}=\sum_{j=1}^n\frac{1}{d_j}=\Theta\left(\frac{n}{d}\right).$$
For $\|G^2\|_F^2$, note that $\Tr((G^2)^TG^2)$ can be interpreted as a sum over 4-cycles in the graph. Since every cycle of the form $i\rightarrow j\rightarrow k$ contributes a 4-cycle and there are at least $\Omega(nd^2)$ of these motifs, this quantity is at least $\Omega\left(\frac{n}{d^2}\right)$.
\end{proof}
\begin{proof}[Proof of Proposition \ref{prop:low-rank}]
    The moment matrix is
    \begin{align}
        \frac{1}{n}Z^TW&=\begin{pmatrix}
            1&\frac{1}{n}1^TX&\frac{1}{n}1^TGX&\frac{1}{n}1^TGY\\
            \frac{1}{n}X^T1 &\frac{1}{n}X^TX&\frac{1}{n}X^TGX&\frac{1}{n}X^TGY\\
            \frac{1}{n}(GX)^T1 &\frac{1}{n}(GX)^TX&\frac{1}{n}(GX)^TGX&\frac{1}{n}(GX)^TGY\\
            \frac{1}{n}(G^2X)^T1&\frac{1}{n}(G^2X)^TX&\frac{1}{n}(G^2X)^TGX&\frac{1}{n}(G^2X)^TGY
        \end{pmatrix}.
    \end{align}
    The expectation of $\frac{1}{n}1^TB^TX$ is $\mu$ if $B1=1$, as is this case for $B=G$ and $G^2$. The expectation of $\frac{1}{n}X^TB^TX$ is $\sigma^2\frac{\Tr(B)}{n}+\mu\rightarrow \mu$ if $B1=1$ or $B^T1=1$ and $\Tr(B)\ll n$. Finally, using the notation $Y=\alpha S^{-1}1+HX+S^{-1}\varepsilon$, we can appropriately define $\mu'=E[1^TGY]$. Combining all of these, we can see that the only variance term which remains after taking limits is the variance term corresponding to $\frac{1}{n}X^TX$, and the mean signal term is rank 1 of the form $(1,\mu,\mu,\mu)^T(1,\mu,\mu,\mu')$.
\end{proof}
\subsection{Network Law of Large Numbers}\label{lln}
In this section, we prove our key result on convergence of network moments:
\begin{lemma}[Network Law of Large Numbers]\label{lemma:general-moment-bound}
    Let $A,B$ be entrywise non-negative matrices independent of $V$, with bounded row and column sums and whose entries are upper bounded in magnitude uniformly by $O(\frac{1}{d})$. 
    If the following conditions hold:
    \begin{equation}\label{eq:lln-conditions}
        1\lesssim d\lesssim n,\quad \|A\|_F^2\wedge \|B\|_F^2=\Omega\left(\frac{n}{d^2}\vee 1\right),\quad \|A\|_F^2\vee \|B\|_F^2=O\left(\frac{n}{d}\right),
    \end{equation}
    then    $$V^TA^TMBV=\sigma^2(\Tr(A^TB)-\frac{1}{n}\Tr(A)\Tr(B))+O_P(\|A\|_F\wedge \|B\|_F),$$
    and $$V^TA^TMB\varepsilon=O_P(\|A\|_F\wedge \|B\|_F).$$
    If in addition to the conditions (\ref{eq:lln-conditions}), $\Tr(A^TB)-\frac{1}{n}\Tr(A)\Tr(B)=\omega(\|A\|_F\wedge \|B\|_F)$, then 
    $$\frac{V^TA^TMBV}{\sigma^2(\Tr(A^TB)-\frac{1}{n}\Tr(A)\Tr(B))}\rightarrow_P 1.$$
    Finally, if in addition to the conditions (\ref{eq:lln-conditions}), $\Tr(A^TB)=\omega(1)$, then
$$\frac{\varepsilon^TA^TMB\varepsilon}{\sigma_\varepsilon^2\Tr(A^TB)}\rightarrow_P 1.$$
\end{lemma}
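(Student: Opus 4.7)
The plan is to decompose $M = I - P$ where $P$ projects orthogonally onto $\mathrm{span}(1, X) = \mathrm{span}(1, \tilde V)$ with $\tilde V := V - \bar V \cdot 1$. Since $1 \perp \tilde V$, we have the orthogonal decomposition $P = \frac{1}{n} 11^T + \frac{\tilde V \tilde V^T}{\|\tilde V\|^2}$, yielding
\[V^T A^T M B V = V^T A^T B V - \frac{1}{n}(V^T A^T 1)(1^T B V) - \frac{(V^T A^T \tilde V)(\tilde V^T B V)}{\|\tilde V\|^2}.\]
The strategy is to extract $\sigma^2 \Tr(A^T B)$ from the first piece and $-\frac{\sigma^2}{n}\Tr(A)\Tr(B)$ from the third, while showing the second piece and all remainders are $O_P(\|A\|_F \wedge \|B\|_F)$.

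For the leading quadratic form $V^T A^T B V$, the iid mean-zero structure of $V$ with bounded fourth moment gives mean $\sigma^2 \Tr(A^T B)$ and variance $\lesssim \|A^T B\|_F^2 + \sum_i (A^T B)_{ii}^2$; both are $O((\|A\|_F \wedge \|B\|_F)^2)$ via the submultiplicativity $\|A^T B\|_F \leq \|A\|_F \|B\|_2 \lesssim \|A\|_F$ (and symmetrically for $B$), using Lemma \ref{lemma:col-sums} to control spectral norms. Chebyshev then yields $V^T A^T B V = \sigma^2 \Tr(A^T B) + O_P(\|A\|_F \wedge \|B\|_F)$. The first projection term is $O_P(1)$ because each linear form $V^T A^T 1$ has variance $\sigma^2 \|A^T 1\|^2 = O(n)$ under the bounded column-sum hypothesis; this $O_P(1)$ is absorbed into the error since $\|A\|_F^2 \wedge \|B\|_F^2 = \Omega(\frac{n}{d^2} \vee 1)$ forces $\|A\|_F \wedge \|B\|_F = \Omega(1)$.

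The delicate term, and the main obstacle, is the $\tilde V$ projection. I would expand $V^T A^T \tilde V = V^T A^T V - \bar V (V^T A^T 1) = \sigma^2 \Tr(A) + O_P(\|A\|_F)$, using the quadratic form analysis above together with $\bar V = O_P(n^{-1/2})$ and $V^T A^T 1 = O_P(\sqrt n)$; symmetrically $\tilde V^T B V = \sigma^2 \Tr(B) + O_P(\|B\|_F)$, and the weak law gives $\|\tilde V\|^2 = \sigma^2 n + O_P(\sqrt n)$. Expanding the ratio, the leading contribution is the desired $\frac{\sigma^2}{n} \Tr(A) \Tr(B)$, with remainders of form $\frac{\Tr(A) \|B\|_F + \|A\|_F \Tr(B)}{n}$ and $\frac{\|A\|_F \|B\|_F}{n}$. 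Using the entrywise bound $|A_{ii}|, |B_{ii}| = O(1/d)$ to obtain $\Tr(A), \Tr(B) = O(n/d)$, together with the Frobenius ceiling $\|A\|_F \vee \|B\|_F = O(\sqrt{n/d})$ and floor $\|A\|_F \wedge \|B\|_F = \Omega(\sqrt{n}/d \vee 1)$, these remainders reduce to $O(\|A\|_F \wedge \|B\|_F)$ after separately checking the regimes $d \lesssim \sqrt n$ and $d \gtrsim \sqrt n$.

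The bilinear form $V^T A^T M B \varepsilon$ admits the same three-piece split. Since $V$ and $\varepsilon$ are independent mean-zero random vectors, $V^T A^T B \varepsilon$ has mean zero and variance $\sigma^2 \sigma_\varepsilon^2 \|A^T B\|_F^2 = O((\|A\|_F \wedge \|B\|_F)^2)$, and each projection correction term is of strictly lower order by the same analysis (no $\Tr(A)\Tr(B)$ leading piece arises because one of the factors now involves $\varepsilon$ and has zero mean). The two ratio-to-one statements follow by dividing through by the leading term and invoking the $\omega$-growth hypothesis to force the $O_P$ remainder to vanish. For the $\varepsilon^T A^T M B \varepsilon$ claim in particular, one conditions on $X$ (hence on $M$, which depends only on $X$) and applies the same quadratic-form argument to $\varepsilon$; the conditional mean is $\sigma_\varepsilon^2 \Tr(A^T M B)$, and since $\Tr(A^T M B) = \Tr(A^T B) + O_P(1)$ by the projection bounds above, dividing by $\sigma_\varepsilon^2 \Tr(A^T B) = \omega(1)$ gives convergence to $1$ in probability.
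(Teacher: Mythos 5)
Your proposal is correct and takes essentially the same route as the paper: the leading terms $\sigma^2\Tr(A^TB)$ and $-\frac{\sigma^2}{n}\Tr(A)\Tr(B)$ are extracted from the unprojected quadratic form and from the $V$-direction of the projection, respectively, and the remainders are controlled by the same moment bounds ($\Tr(A),\Tr(B)=O(n/d)$, the Frobenius ceiling and floor from (\ref{eq:lln-conditions}), and the $O_P(\sqrt{n})$ linear forms), which coincide with the paper's $r_1$ and $r_2$ terms. The only cosmetic differences are that you write the projection as $\frac{1}{n}11^T+\tilde V\tilde V^T/\|\tilde V\|^2$ where the paper expands $Z(Z^TZ)^{-1}Z^T$ via the explicit $2\times 2$ inverse, and that you treat $\varepsilon^TA^TMB\varepsilon$ by conditioning on $X$ and comparing $\Tr(A^TMB)$ to $\Tr(A^TB)$ rather than substituting $\varepsilon$ into the $V$-quadratic-form bounds.
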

To prove this, we state and prove the following lemma:
\begin{lemma}\label{lemma:network-lln}
    Let $A,B\in \mathbb{R}_{\geq 0}^{n\times n}$ be matrices which have uniformly bounded column and row sums and whose entries are uniformly upper bounded in magnitude by $O\left(\frac{1}{d}\right)$. Then
    \begin{align}
        V^TA^TBV=\sigma^2 \Tr(A^TB)+O_P\left(\|A\|_F\wedge \|B\|_F+\sqrt{\frac{\Tr(A^TB)}{d}}\right),\label{lemma:network-lln1}
    \end{align}
    and
    \begin{align}
        1^TA^TBV=O_P(\sqrt{n}).\label{lemma:network-lln2}
    \end{align}
    Furthermore,
    \begin{align}
        V^TA^TP_ZBV&=\sigma^2\frac{1}{n}\Tr(A)\Tr(B)+O_P(1+r_1(A,B)+r_2(A,B)),\label{lemma:network-lln3}
    \end{align}
    where $r_1(A,B)=\frac{1}{\sqrt{n}}(\Tr(A)+\|A\|_F+\Tr(B)+\|B\|_F)$ and $r_2(A,B) = \frac{1}{n}(\Tr(A)\|B\|_F+\Tr(B)\|A\|_F+\|A\|_F\|B\|_F)$, and
    \begin{align}
V^TA^TMB\varepsilon=O_P\left((\|A\|_F\wedge \|B\|_F)+\frac{1}{n}(\Tr(A)+\|A\|_F+\sqrt{n})(\sqrt{n}+\|B\|_F)\right).\label{lemma:network-lln4}
    \end{align}
    Finally, if $\Tr(A^TB)=\omega(1)$,
    \begin{align}
\varepsilon^TA^TMB\varepsilon=\sigma^2_\varepsilon\Tr(A^TB)+O_P\left(\frac{1}{n}(\sqrt{n}+\|A\|_F)(\sqrt{n}+\|B\|_F)\right).\label{lemma:network-lln5}
    \end{align}
\end{lemma}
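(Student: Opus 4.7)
My plan is to prove all five bounds through direct mean and variance computations on the relevant quadratic and bilinear forms, leveraging (i) the i.i.d.\ mean-zero structure of $V$ and $\varepsilon$; (ii) the uniform entrywise bound $\max_{i,j} A_{ij}, B_{ij} = O(1/d)$; and (iii) the bounded row and column sums of $A$ and $B$, which by Lemma~\ref{lemma:col-sums} propagate under products to give $\|A\|_2, \|B\|_2 = O(1)$ and preserve bounded column sums. Chebyshev then converts each second-moment estimate into the stated $O_P$ rate.

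For (\ref{lemma:network-lln1}), I compute $\E[V^T A^T B V] = \sigma^2 \Tr(A^T B)$ directly and bound the variance of $V^T C V$ with $C := A^T B$ via the standard identity $\operatorname{Var}(V^T C V) = (\E[V_i^4] - 3\sigma^4)\sum_i C_{ii}^2 + \sigma^4(\|C\|_F^2 + \Tr(C^2))$. From $\|A^T B\|_F \le \|A\|_F \|B\|_2$ and the symmetric bound, I obtain the $\|A\|_F \wedge \|B\|_F$ contribution. The sharper $\sqrt{\Tr(A^T B)/d}$ term comes from the diagonal: since $C_{ii} = \sum_k A_{ki} B_{ki} \le (\max_{k,i} A_{ki})\sum_k B_{ki} = O(1/d)$, one has $\sum_i C_{ii}^2 \le \max_i C_{ii} \cdot \Tr(C) = O(\Tr(A^T B)/d)$. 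Equation (\ref{lemma:network-lln2}) is immediate: $1^T A^T B V$ is linear in $V$ with variance $\sigma^2 \|B^T A 1\|^2$, and each coordinate of $B^T A 1$ is $O(1)$ by chained bounded column sums, so the norm is $O(\sqrt n)$.

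For (\ref{lemma:network-lln3}), I use that $\operatorname{span}(Z) = \operatorname{span}(1, \tilde V)$ with $\tilde V := V - \bar V 1$ orthogonal to $1$, giving $P_Z = \tfrac{1}{n} 1 1^T + \tfrac{\tilde V \tilde V^T}{\|\tilde V\|^2}$. The $\tfrac{1}{n}(1^T A V)(1^T B V)$ piece is $O_P(1)$ by (\ref{lemma:network-lln2}). For the main piece, a direct variance calculation analogous to (\ref{lemma:network-lln1}) (with $B=I$, giving only the $\|A\|_F$-rate since the $O(1/d)$ bound no longer applies) yields $V^T A^T V = \sigma^2 \Tr(A) + O_P(\|A\|_F)$, so $V^T A^T \tilde V = V^T A^T V - \bar V \cdot 1^T A V = \sigma^2 \Tr(A) + O_P(\|A\|_F + 1)$, and symmetrically for $\tilde V^T B V$. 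Combined with $\|\tilde V\|^2/n \to_P \sigma^2$, expanding the product extracts the signal $\tfrac{\sigma^2}{n}\Tr(A)\Tr(B)$ plus cross-terms that regroup into $r_1$ and $r_2$. For (\ref{lemma:network-lln4}), decompose $V^T A^T M B \varepsilon = V^T A^T B \varepsilon - (Z^T A V)^T (Z^T Z)^{-1}(Z^T B \varepsilon)$: the first piece has second moment $\sigma^2 \sigma_\varepsilon^2 \|B^T A\|_F^2 = O((\|A\|_F \wedge \|B\|_F)^2)$ by independence, while the second uses $(Z^T Z)^{-1} = O_P(1/n)$ (since $Z^T Z/n$ converges to a positive definite limit) together with bounds on each factor from (\ref{lemma:network-lln1})--(\ref{lemma:network-lln2}) applied to $AV$ and the analogous linear-form variance for $B\varepsilon$. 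Finally (\ref{lemma:network-lln5}) is handled analogously: $\varepsilon^T A^T B \varepsilon$ gives the signal $\sigma_\varepsilon^2 \Tr(A^T B)$ via the same variance identity applied to $\varepsilon$, and $\varepsilon^T A^T P_Z B \varepsilon$ is bounded by conditioning on $X$ (so $P_Z$ is deterministic) and computing $\E[\|Z^T A \varepsilon\|^2 \mid X] = \sigma_\varepsilon^2 \Tr(A^T Z Z^T A) = O_P(n + \|A\|_F^2)$ in expectation.

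The main obstacle will be the bookkeeping in (\ref{lemma:network-lln3}): expanding $(V^T A^T \tilde V)(\tilde V^T B V)/\|\tilde V\|^2$ yields roughly nine cross terms in $\Tr(A), \|A\|_F, \Tr(B), \|B\|_F$, and constants, which must be carefully regrouped into the specific shapes $r_1$ and $r_2$ stated in the lemma. The essential technical input throughout is the entrywise $O(1/d)$ bound used in (\ref{lemma:network-lln1}): without it, $\sum_i C_{ii}^2$ would be controlled only at order $\|C\|_F^2$ rather than $\Tr(C)/d$, and the sharper error that drives Theorem~\ref{thm:ols-formal} would be lost.
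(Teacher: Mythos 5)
Your proposal matches the paper's proof essentially step for step: the same mean/variance computation for the quadratic and bilinear forms (including the diagonal bound $\sum_i (A^TB)_{ii}^2\lesssim \Tr(A^TB)/d$ obtained from non-negativity, bounded column sums, and the entrywise $O(1/d)$ bound), Chebyshev's inequality, and operator-norm bounds with $\|(Z^TZ)^{-1}\|_2=O_P(1/n)$ to control the projection pieces in (\ref{lemma:network-lln4})--(\ref{lemma:network-lln5}). The only (cosmetic) difference is that for (\ref{lemma:network-lln3}) you use the orthogonal decomposition $P_Z=\frac{1}{n}11^T+\tilde V\tilde V^T/\|\tilde V\|^2$ rather than expanding the explicit $2\times 2$ inverse of $Z^TZ$ as the paper does; both yield the same cross terms, which regroup into $r_1$ and $r_2$ exactly as you describe.
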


\begin{proof}[Proof of Lemma \ref{lemma:network-lln}]
We first show (\ref{lemma:network-lln1}). Note that $E[V^TA^TBV]=\sigma^2\Tr(A^TB)$. By the same computations as Lemma A.1 from \citet{lee2002consistency},  we have
\begin{align*}
    &E[(V^TA^TBV)^2]-[E(V^TA^TBV)]^2\\
    ={}&\sigma^2(\Tr((A^TB)^2+(A^TB)^TA^TB))+(3\sigma^2-M_4)\sum_i (A^TB)_{ii}^2,
\end{align*}
where $M_4$ is the fourth moment of $\varepsilon$. It follows that 
\begin{align}
V^TA^TBV&=\sigma^2\Tr(A^TB)+O_P\left(\sqrt{\Tr(A^TBB^TA+A^TBA^TB)+\sum_i (A^TB)_{ii}^2}\right).
\end{align}
We have the upper bound
\begin{align}
    \Tr(A^TBB^TA)&\leq \|A\|_F\|BB^TA\|_F\\
    &\leq \|A\|_F\|BB^T\|_2\|A\|_F\lesssim \|A\|_F^2,
\end{align}
where the last line is due to Lemma \ref{lemma:col-sums}. Alternatively, starting with the step $\Tr(A^TBB^TA)\leq \|B\|_F\|B^TAA^T\|_F$ shows that $\Tr(A^TBB^TA)\lesssim \|B\|_F^2$. The same proof can be used to upper bound $\Tr((A^TB)^2)$, so $\Tr(A^TBB^TA+A^TBA^TB)\lesssim \|A\|_F^2\vee \|B\|_F^2$. The only term that remains to be bounded is $\sum_i (A^TB)_{ii}^2$. We have
\begin{align}
    \sum_i (A^TB)_{ii}^2&\leq \sum_i |(A^TB)_{ii}|\max_i |(A^TB)_{ii}|\\
    &=\Tr(A^TB)\max_i |(A^TB)_{ii}|
\end{align}
using non-negativity of $A$ and $B$. We also have
\begin{align}
    \max_i |(A^TB)_{ii}|&=\max_i (Ae_i)^T(Be_i)\\
    &\leq \max_i |Ae_i|_1|Be_i|_\infty\lesssim\frac{1}{d}
\end{align}
using the bounded column sum assumption and the boundedness condition on the entries. Combining these yields (\ref{lemma:network-lln1}). 

For (\ref{lemma:network-lln2}), we have $E[1^TA^TBV]=0$ and 
\begin{align}
E[(1^TA^TBV)^2]&=E[1^TA^TBVV^TB^TA1]\\
&=\sigma_\varepsilon^2\Tr(1^TA^TBB^TA1)\lesssim n
\end{align}
using the bounded row and column sum assumption. The bound follows from Chebyshev's inequality.

For (\ref{lemma:network-lln3}), note that $Z(Z^TZ)^{-1}Z^T$ is the same if $Z=(1,X)$ or $(1,V)$ where $V=X-\mu 1$ by invariance of projections under invertible linear transformations. Thus we will use the definition $Z=(1,V)$. 

We need to analyze $(V^TA^TZ(Z^TZ)^{-1}Z^TBV)$. By the previous parts of this lemma, $V^TA^TV=\sigma^2\Tr(A)+O_P(\|A\|_F)$ and $1^TA^TV=O_P(\sqrt{n})$. Similarly, $V^TB^TV=\sigma^2\Tr(B)+O_P(\|B\|_F)$ and $1^TB^TV=O_P(\sqrt{n})$. Since the $V_i$ are i.i.d., $\|(Z^TZ)^{-1}\|_2\lesssim \frac{1}{n}$. More specifically, by writing out the inverse, 
\begin{align}
    (Z^TZ)^{-1}&=\frac{1}{nV^TV-(1^TV)^2}\begin{pmatrix}
        V^TV&-1^TV\\
        \cdot & n
    \end{pmatrix}.
\end{align}
The denominator is $n(n\sigma^2+O_P(\sqrt{n}))-O_P(n)=n^2\sigma^2+O_P(n^{3/2})$ by the central limit theorem. All the numerator terms are $O_P(n)$. The lower-right corner is equal to $\frac{1}{n\sigma^2+O_P(\sqrt{n})}$.

Expanding $(V^TA^TZ(Z^TZ)^{-1}Z^TBV)$ as a quadratic form of $(Z^TZ)^{-1}$, we see that this can be written as a sum of four terms, corresponding to each entry in $(Z^TZ)^{-1}$. The term corresponding to the lower-right corner is $\frac{1}{n\sigma^2+O_P(\sqrt{n})}\sigma^4\Tr(A)\Tr(B)$, while the other three terms can be seen to comprise of the entries in $r_1$ and $r_2$. We just need to show that the remainder due to the error in the denominator $O_P(\sqrt{n})$ is negligible. This is true since
\begin{align}
    \frac{1}{n+O_P(\sqrt{n})}-\frac{1}{n}&=\frac{O_P(\sqrt{n})}{n(n+O_P(\sqrt{n}))}=O_P(n^{3/2}),
\end{align}
and
\begin{align}
    \left(\frac{1}{n}-\frac{1}{n+O_P(\sqrt{n})}\right)\sigma^2\Tr(A)\Tr(B)&=O_P\left(\frac{1}{n^{3/2}}\left(\frac{n}{d}\right)^2\right)\\
    &=O_P\left(\frac{\sqrt{n}}{d^2}\right)=O_P\left(\frac{\sqrt{n}}{d}\right)=O_P(\|A\|_F\wedge \|B\|_F),
\end{align}
as desired.

We next show (\ref{lemma:network-lln4}). Since $\varepsilon$ is independent of $V$, the same analysis for (\ref{lemma:network-lln2}) shows that $V^TA^TB\varepsilon$ is zero-mean. The variance is now bounded by $O_P(\Tr(A^TBB^TA))=O_P(\|A\|_F^2\vee \|B\|_F^2)$, since in that proof, $11^T$ is replaced by $\varepsilon \varepsilon^T$. This shows that $V^TA^TB\varepsilon=O_P(\|A\|_F\wedge \|B\|_F)$. 

To show the bound on $V^TA^TP_ZB\varepsilon$, similar to the proof of the previous part, we have $\|V^TA^TZ\|_2=O_P(\Tr(A)+\|A\|_F+\sqrt{n})$ and $\|(Z^TZ)^{-1}\|_2=O_P\left(\frac{1}{n}\right)$. We also have $1^TB\varepsilon=O_P(\sqrt{n})$ by the same argument as for $1^TBV$, and $V^TB\varepsilon=O_P(\|B\|_F)$ by a straightforward computation of the variance. Combining yields the desired bound.

Finally, we show the required bound (\ref{lemma:network-lln5}) on $\varepsilon^TA^TMB\varepsilon$. Using the first part of this lemma and substituting $\varepsilon$ for $V$, we see 
$$\varepsilon^TA^TB\varepsilon=\sigma_\varepsilon^2\Tr(A^TB)+O_P(\|A\|_F\wedge \|B\|_F).$$
We just need to upper bound $\varepsilon^TA^TZ(Z^TZ)^{-1}Z^TB\varepsilon$. Note that $\varepsilon^TA^TZ=(\varepsilon^TA^T1,\varepsilon^TA^TV)$. By the previous computations, these are $O_P(\sqrt{n})$ and $O_P(\|A\|_F)$ respectively. The bound follows from the same argument as above.
\end{proof}

We can now use this lemma to prove Lemma \ref{lemma:general-moment-bound}:
\begin{proof}[Proof of Lemma \ref{lemma:general-moment-bound}]
    For the first part, note that Lemma \ref{lemma:network-lln} implies
    \begin{align}
V^TA^TMBV=&\sigma^2\left(\Tr(A^TB)-\frac{\Tr(A)\Tr(B)}{n}\right)+\\
&O_P\left(\|A\|_F\wedge \|B\|_F+\sqrt{\frac{\Tr(A^TB)}{d}}+1+r_1+r_2\right),
    \end{align}
where $r_1$ and $r_2$ are as defined in Lemma \ref{lemma:network-lln}. It suffices to show that $\sqrt{\frac{\Tr(A^TB)}{d}}+1+r_1+r_2=O_P(\|A\|_F\wedge \|B\|_F)$. We have
\begin{align}
    \sqrt{\frac{\Tr(A^TB)}{d}}&\leq \sqrt{\frac{\|A\|_F\|B\|_F}{d}}\lesssim \sqrt\frac{\sqrt{n}\|B\|_F}{d^{3/2}}\\
    &\leq \sqrt{\frac{\sqrt{n}\|B\|_F}{d}}\lesssim \sqrt{\|B\|_F^2}=\|B\|_F,
\end{align}
where the inequalities are using the lower and upper bounds assumed on $\|A\|_F$ and $\|B\|_F$. Reversing the roles of $A$ and $B$ shows that this is also upper bounded in order by $\|A\|_F$.

The bound $1\lesssim \|A\|_F\wedge \|B\|_F$ holds due to the lower bound assumption on the norms $\|A\|_F$ and $\|B\|_F$. For $r_1$, we need to bound $\frac{1}{\sqrt{n}}(\Tr(A)+\|A\|_F)$ and similarly for $B$. Since $A$'s entries are bounded uniformly by $O\left(\frac{1}{d}\right)$, $\Tr(A)\lesssim \frac{n}{d}$. This means
\begin{align}
    \frac{1}{\sqrt{n}}\Tr(A)\lesssim \frac{\sqrt{n}}{d}\leq \|A\|_F\wedge \|B\|_F
\end{align}
by the assumptions of the lemma. For the other term, 
\begin{align}
    \frac{1}{\sqrt{n}}\|A\|_F\lesssim \frac{1}{\sqrt{d}}\leq \frac{1}{\sqrt{d}}\sqrt\frac{n}{d}\lesssim \|A\|_F\wedge \|B\|_F
\end{align}
from the assumed bounds in the lemma. The same analysis symmetrically applied to $B$ yields the bound for $r_1$.

For $r_2$, we have
\begin{align}
    \frac{1}{n}\Tr(A)\|B\|_F\lesssim \frac{1}{d}\|B\|_F.
\end{align}
Clearly, this is at most $\|B\|_F$. It is also at most $\|A\|_F$ since
\begin{align}
    \frac{1}{d}\|B\|_F\lesssim \frac{1}{d}\sqrt{\frac{n}{d}}=\frac{\sqrt{n}}{d^{3/2}}\leq \frac{\sqrt{n}}{d}\lesssim \|A\|_F\wedge \|B\|_F.
\end{align}
The final term to be bounded is
\begin{align}
    \frac{1}{n}\|A\|_F\|B\|_F\lesssim \frac{1}{\sqrt{nd}}\|B\|_F\leq \|B\|_F,
\end{align}
and the same proof with the roles of $A$ and $B$ interchanged shows that this is at most $\|A\|_F$ as well. This proves the first part of the lemma. The second part follows from the bounds on $V^TA^TMB\varepsilon$ proved in Lemma \ref{lemma:network-lln}, since
\begin{align}
    \frac{1}{n}(\Tr(A)+\|A\|_F+\sqrt{n})(\sqrt{n}+\|B\|_F)&\lesssim \frac{1}{\sqrt{n}}(\Tr(A)+\|A\|_F+\sqrt{n})\\
    &\lesssim \frac{1}{\sqrt{n}}\left(\frac{n}{d}+\sqrt{\frac{n}{d}}+\sqrt{n}\right)\\
    &\leq 1\vee \frac{\sqrt{n}}{d}=O(\|A\|_F\wedge \|B\|_F).
\end{align}
Finally, the last part is immediate from the bounds on $\varepsilon^TA^TMB\varepsilon$ in Lemma \ref{lemma:network-lln}, using $\sqrt{n}+\|A\|_F\lesssim \sqrt{n}+\sqrt\frac{n}{d}\lesssim \sqrt{n}$ by assumption on $d$.
\end{proof}
\subsection{OLS Analysis}
\subsubsection{OLS Proofs}
We state the following lemma, which shows that the LLN we provide is satisfied for various relevant matrices.
\begin{lemma}\label{lemma:lln-satisfied}
    If $\rho\geq 0$, the matrices $G$, $GS^{-1}$, and $GH$ satisfy the conditions of the first part of Lemma \ref{lemma:general-moment-bound}. If $d\ll \sqrt{n}$, then $G^2$ also satisfies the conditions. Furthermore, if $\beta\neq 0$, then $\Tr(G^TGH)$ is $\omega(\|GH\|_F)$ and $\Tr((GH)^TGH)$ is $\Omega\left(\frac{n}{d}\right)$, so that the second part of the theorem holds for $(A,B)=(G,GH)$ and $(GH,GH)$.
\end{lemma}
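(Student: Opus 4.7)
The plan is to check, for each matrix pair appearing in the statement, the three hypotheses of Lemma \ref{lemma:general-moment-bound}: a uniform magnitude bound $O(1/d)$ on entries, bounded row and column sums, and the sandwich $\|\cdot\|_F^2\in[\Omega(n/d^2\vee 1),O(n/d)]$. Non-negativity is immediate for $G$, and is inherited by $GS^{-1}$ via the Neumann expansion $GS^{-1}=\sum_{k\geq 0}\rho^k G^{k+1}$ under $\rho\geq 0$. For $GH=\beta GS^{-1}+\delta G^2 S^{-1}$ the entries can be signed, but the proof of Lemma \ref{lemma:network-lln} invokes non-negativity only through the step $\sum_i (A^TB)_{ii}^2\leq \Tr(A^TB)\max_i|(A^TB)_{ii}|$; this step can be replaced by $\sum_i (A^TB)_{ii}^2\leq \|A\|_F\|B\|_F\max_i|(A^TB)_{ii}|$ via Cauchy--Schwarz, which yields the same order and so preserves the conclusion in the signed case.

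For the entrywise $O(1/d)$ bound, I would argue inductively that $(G^k)_{ij}=O(1/d)$ for every fixed $k\geq 1$. The base case uses Assumption \ref{assumption:regular}, and the induction step $(G^k)_{ij}=\sum_l G_{il}(G^{k-1})_{lj}$ combined with the unit row-sums of $G$ preserves the bound. Summing the Neumann series with geometric factor $\rho^k$ then gives the same $O(1/d)$ bound on $GS^{-1}$ and hence on $GH$. Bounded row and column sums follow from Lemma \ref{lemma:col-sums}, since each matrix is a product of factors with bounded row and column sums by Assumption \ref{assumption:network-reg}.

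For the Frobenius-norm sandwich, the upper bound $\|\cdot\|_F^2=O(n/d)$ holds uniformly for $M\in\{G,GS^{-1},GH\}$ by combining $\|G\|_F^2=\Theta(n/d)$ (Proposition \ref{prop:rates}) with the submultiplicative inequality $\|AB\|_F\leq\|A\|_F\|B\|_2$ and $\|S^{-1}\|_2=O(1)$. For the lower bound, when $\rho\geq 0$ the entrywise dominance $GS^{-1}\geq G$ gives $\|GS^{-1}\|_F\geq\|G\|_F=\Theta(\sqrt{n/d})$; for $\|GH\|_F^2$, Lemma \ref{lemma:nonzero-moments} yields $m_{GH,GH}\neq 0$ when $\beta\neq 0$, so $\|GH\|_F^2=\Theta(n/d)$, while if $\beta=0$ then $GH=\delta G^2S^{-1}$ and by the same entrywise comparison $\|G^2S^{-1}\|_F\geq\|G^2\|_F=\Omega(\sqrt{n/d^2})$. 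For $G^2$ paired with itself, Proposition \ref{prop:rates} directly gives $\|G^2\|_F^2\in[\Omega(n/d^2),O(n/d)]$, which matches $\Omega(n/d^2\vee 1)$ precisely in the regime $d\ll\sqrt{n}$ the lemma assumes.

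The second claim is then a direct consequence of Lemma \ref{lemma:nonzero-moments}: when $\beta\neq 0$, both $m_{G,GH}$ and $m_{GH,GH}$ are nonzero, so $\Tr(G^TGH)=\Theta(n/d)$ and $\Tr((GH)^TGH)=\Theta(n/d)$. Dividing by $\|GH\|_F=\Theta(\sqrt{n/d})$ gives $\Tr(G^TGH)/\|GH\|_F=\Theta(\sqrt{n/d})\to\infty$ under the standing $d\ll n$, confirming the required $\omega$ relation, while $\Tr((GH)^TGH)=\Omega(n/d)$ is immediate. The main obstacle is controlling $\|GH\|_F$ from below for general parameter values: a direct expansion of $\|\beta G+\delta G^2\|_F^2$ could in principle suffer near-cancellation, and routing the argument through the previously established Lemma \ref{lemma:nonzero-moments} rather than attempting a direct norm computation is what makes the proof go through cleanly.
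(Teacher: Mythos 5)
Your proof is correct in substance and follows the same core argument as the paper: the lower bounds come from entrywise non-negativity under $\rho\geq 0$ (so $\|GS^{-1}\|_F^2\geq\|G\|_F^2=\Theta(n/d)$, and the $GH$ traces are bounded below by multiples of $\Tr(G^TG)$ when $\beta\neq 0$), and the $G^2$ case comes from Proposition \ref{prop:rates} together with $d\ll\sqrt{n}$. The paper's proof is much terser---it checks only these lower bounds and dismisses $GH$ with ``the same proof suffices''---whereas you also verify the hypotheses it leaves implicit: the entrywise $O(1/d)$ bound via the Neumann series, bounded row and column sums via Lemma \ref{lemma:col-sums}, the $O(n/d)$ upper bounds, and the sign issue for $GH$, which you patch by replacing the non-negativity step in Lemma \ref{lemma:network-lln} with a Cauchy--Schwarz bound; that extra care is welcome. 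Routing the $\beta\neq 0$ lower bound through Lemma \ref{lemma:nonzero-moments} is not circular, since that lemma's proof rests on the trace inequality $\Tr((GH)^TGH)\gtrsim\beta^2\Tr(G^TG)$ rather than on the present statement, but be aware it does not really sidestep the cancellation worry you raise: that inequality (and hence Lemma \ref{lemma:nonzero-moments} as proved) implicitly assumes no cancellation between the $\beta GS^{-1}$ and $\delta G^2S^{-1}$ terms, a looseness shared by the paper. The one concrete gap in your write-up is in the $\beta=0$ branch: the required bound is $\|GH\|_F^2=\Omega\left(\frac{n}{d^2}\vee 1\right)$, and your estimate $\|G^2S^{-1}\|_F^2\geq\|G^2\|_F^2=\Omega(n/d^2)$ misses the ``$\vee\,1$'' part when $d\gtrsim\sqrt{n}$; this is fixed in one line by noting that $G^2$ is row-stochastic, so each row has squared Euclidean norm at least $1/n$ and $\|G^2\|_F^2\geq 1$ (the paper is no more careful here, since its claimed bound $\|GH\|_F^2\geq\Tr(G^TG)$ can fail outright when $\beta=0$).
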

\begin{proof}
    We have $\Tr(G^TG)=\sum_i\sum_j G_{ij}^2=\sum_i d_i\frac{1}{d_i^2}=\Theta(n/d)$. For $GS^{-1}$, we have
    \begin{align}
        \|GS^{-1}\|_F^2&=\Tr(S^{-T}G^TGS^{-1})\geq \Tr(G^TG)
    \end{align}
    since $\rho$ is non-negative. The same proof suffices for $GH$.

    For $G^2$, Proposition \ref{prop:rates} shows that $\|G^2\|_F^2\gtrsim \frac{n}{d^2}$, which is at least $\Omega(1)$ if $d\ll \sqrt{n}$.

    Finally, $\Tr(G^TGH)\geq \Tr(G^TG)$ by non-negativity of $\rho$, and similar for $\Tr((GH)^TGH)$, which shows that both are $\Omega(\Tr(G^TG))=\omega(\|G\|_F)$.
\end{proof}
\begin{proof}[Proof of Lemma \ref{lemma:nonzero-moments}]
    The proofs for $\Tr(G^TG)$, $\Tr(G^TGS^{-1})$, $\Tr((GS^{-1})^TGS^{-1})$, $\Tr(GS^{-1})$ all follow from non-negativity of $\rho$, as in the proof of Lemma \ref{lemma:lln-satisfied}. The same holds for $m_{G,GH}$ and $m_{GH,GH}$ when $\beta\neq 0$, since the sum of the traces is lower bounded by $\beta \Tr(G^TG)$ or $\beta^2 \Tr(G^TG)$, respectively.
\end{proof}
\begin{proof}[Proof of Proposition \ref{cor1}]
    We can write
    \[
\left(\frac{1}{\|G\|_F^2}\tilde{W}_2^T\tilde{W}_2\right)=\frac{1}{\|G\|_F^2}\begin{pmatrix}
    e_1(G,G)&(e_1(G,GH)+v(G,GS^{-1}))\\
    \cdot & (e_1(GH,GH)+v'(GH,GS^{-1})+e_2(GS^{-1},GS^{-1}))
\end{pmatrix}.
\]
Using Lemma \ref{lemma:lln-satisfied}, we see that all the terms of the form $e_1$ or $e_2$ converge. From Lemma \ref{lemma:general-moment-bound}, we have probability bounds on the $v$'s, which are of the order $\|G\|_F$; for example, from the last bound in Lemma \ref{lemma:general-moment-bound}, $$v(G,GS^{-1})=O_P\left(\|G\|_F\wedge \|GS^{-1}\|_F+\frac{1}{n}(\|G\|_F+\sqrt{n})(\sqrt{n}+\|GS^{-1}\|_F)\right),$$
and we can see this is $O_P(\|G\|_F)$ using the same calculations as in Lemma \ref{lemma:lln-satisfied}. Similarly, since $\Tr(GS^{-1})=\Omega(G^2)=\Omega(\|G\|_F^2)=\omega(1)$, the last statement of Lemma \ref{lemma:general-moment-bound} shows the desired convergence result.
\end{proof}
\begin{proof}[Proof of Proposition \ref{prop:ols-clt}]
    The result is immediate from the Central Limit Theorem, using the fact that $G$ and $GH$ have bounded column sums, and the Cramer-Wold device, as in the proof of Lemma A.2 in \citet{lee2002consistency}.
\end{proof}
\subsubsection{Cases where OLS is consistent}\label{dis:ols-consistency}
From Theorem \ref{thm:ols-formal}, we can see that there are some edge cases where the OLS estimator could be consistent. Notably, if $\rho=0$, then there is no endogenous peer effect and thus there is no endogeneity in the model. Naturally, OLS is consistent in this case, as is evident from the theorem by noting that $\Tr(GS^{-1})=\Tr(G)=0$, and so $m_{GS^{-1}}=0$ as well. By examining the proof, we can further see that 
\begin{align}
    \frac{1}{\|G\|_F^2}e_2(I,GS^{-1})&=\frac{1}{\|G\|_F^2}e_2(I,G)\\
    &=O_P\left(\frac{1}{\|G\|_F^2}\|G\|_F\right)=O_P\left(\sqrt\frac{d}{n}\right).
\end{align}
This implies that the bias shrinks at a rate $O_P\left(\sqrt\frac{d}{n}\right)$, which is the same order of the standard deviation. Thus the overall rate of convergence is $O_P\left(\sqrt\frac{d}{n}\right)$. A similar analysis holds for the contextual effect in this case.

If $\rho\neq 0$, the endogenous peer effect estimate is inconsistent. Interestingly, if $\beta =0$ and hence there is no direct covariate effect, then the contextual effect estimate could be consistent. In this case, 
\begin{align}
    \Tr(G^TGH)&=\Tr(G^TG^2S^{-1})\\
    &= \Tr(G^TG^2)+\rho \Tr(G^TG^3)+\rho^2\Tr(G^TG^4S^{-1})\\
    &\leq \Tr(G^TG^2)+|\rho|\|G^TG\|_F\|G^2\|_F+\rho^2 \|G^TG\|_F\|G^2G^2S^{-1}\|_F\\
    &\leq \Tr(G^TG^2)+|\rho|\|G^TG\|_F\|G^2\|_F+\rho^2 \|G^TG\|_F\|G^2\|_F\|G^2S^{-1}\|_2,
\end{align}
using properties of trace and Frobenius norm. Since we have the bounds $\|G^2S^{-1}\|_2=O(1)$, $\|G^TG\|_F=\Theta(\|G^2\|_F)$, and $\Tr(G^TG^2)=\Theta(\Tr(G^3))$ by Lemmas \ref{lemma:order-equivalence} and \ref{lemma:col-sums}, this means $\Tr(G^TGH)=O(\Tr(G^3)+\|G^2\|_F^2)$. If this is $o\left(\frac{n}{d}\right)$, then the bias will vanish. This happens when the network has a small number of triangles and four-cycles, as can be observed by writing
$$\Tr(G^k)=\sum_{c\in C_k}\prod_{(u,v)\in c}\frac{1}{d_u}=\Theta_P\left(\frac{|C_k|}{d^k}\right),$$
where $C_k$ is the set of $k$-cycles. In particular, for random graphs, $\Tr(G^3)=\Theta(1)$ and $\Tr(G^4)=\Theta(\frac{n}{d^2}\vee 1)$ by considering the number of cycles of three or four distinct nodes. It follows that $\Tr(G^TGH)=O\left(\frac{n}{d^2}\vee 1\right)=o\left(\frac{n}{d}\right)$ for $1\lesssim d\lesssim n$. 

This shows that if $\rho \neq 0$ but $\beta=0$, then for random graphs, $\Tr(G^TGH)-\frac{1}{n}\Tr(G)\Tr(GH)=O\left(\frac{n}{d^2}\vee 1\right)$, and by Lemma \ref{lemma:general-moment-bound}, $e_1(G,GH)=V^TG^TMGH=O_P\left(\left(\frac{n}{d^2}\vee 1\right)+\|GH\|_F\right)=O_P\left(\frac{n}{d^2}\vee 1\right)$. The bias is then seen to decay at a rate $O\left(\frac{d}{n}\left(\frac{n}{d^2}\vee 1\right)\right)=O\left(\frac{1}{d}\vee \frac{d}{n}\right)$. 
\subsection{2SLS Analysis}
\subsubsection{Discussion of Assumption \ref{assumption:network-mom2}}\label{wi-app-assumption}
To guide intuition for this assumption, consider the case $\Tr((G^2)^TG^2)-\frac{1}{n}\Tr(G^2)^2$ for a $d$-regular graph. In this case, the first term is $\frac{1}{d^4}\Tr(A^4)=\frac{|C_4|}{d_4}$, while the second term is $\frac{1}{nd^4}\Tr(A^2)^2=\frac{1}{nd^4}|C_2|^2$. The number of 2-cycles is the number of oriented edges, i.e., $nd$, so $\frac{1}{nd^4}\Tr(A^2)^2=\frac{n}{d^2}$. On the other hand, the number of 4-cycles is at least $2nd(d-1)$, which can be seen by considering each vertex to be either the beginning of a a length two path (corresponding to a cycle $i\rightarrow j\rightarrow k\rightarrow j\rightarrow i$), or the middle of one (corresponding to $i\rightarrow j\rightarrow i\rightarrow k\rightarrow i$). This means $\frac{1}{d^4}\Tr(A^4)$ is asymptotically at least twice as large as $\frac{1}{nd^4}\Tr(A^2)^2$; that is, the partialled out term is a constant factor of the original signal.
\subsubsection{2SLS Proofs}
\begin{proof}[Proof of Proposition \ref{prop:2sls-covariance}]
$\|G^2\|_F(\tilde{Z}_2^T\tilde{W}_2)^{-1}F^{-1}$ can be expanded as
    \begin{align}
\|G^2\|_F\frac{1}{\phi}\begin{pmatrix}
\|G\|_F(e(G^2,GH)+v(G^2,GS^{-1}))&-\|G^2\|_F(e(G,GH)+v(G,GS^{-1}))\\
    -\|G\|_Fe(G^2,G)&\|G^2\|_Fe(G,G)
\end{pmatrix},
\end{align}
where $\phi=e(G,G)(e(G^2,GH)+v(G^2,GS^{-1}))-e(G^2,G)(e(G,GH)+v(G,GS^{-1}))$. By Assumption \ref{assumption:triangles}, $e(G^2,G)$ is of a smaller order than $e(G^2,GH)$, since the latter contains terms of the order $G^4$. This means that $\phi$ is of the order $e(G,G)e(G^2,GH)=\Omega(\|G\|_F^2\|G^2\|_F^2)$. Keeping only the higher order terms $e(G,G)e(G^2,GH)$ and removing the lower order $v$ terms in the numerator, we have 
\begin{align}
    \|G^2\|_F(\tilde{Z}_2^T\tilde{W}_2)^{-1}F^{-1}\sim\frac{\|G^2\|_F}{e(G,G)e(G^2,GH)}\begin{pmatrix}
\|G\|_F(e(G^2,GH))&\|G^2\|_F(-e(G,GH))\\
    -\|G\|_Fe(G^2,G)&\|G^2\|_Fe(G,G)
    \end{pmatrix}.
\end{align}
Note that $e(G^2,GH)$ satisfies the LLN, as shown in Lemma \ref{lemma:lln-satisfied}. Consequently, $\frac{\|G^2\|_F}{e(G^2,GH)}\rightarrow_P \frac{1}{\sigma^2m'_{G^2,GH}}$, as defined by Assumption \ref{assumption:network-mom2}. Thus the second column converges to the desired population moment. The top left entry converges by our Assumption \ref{assumption:network-mom2}, and the bottom left entry converges to zero by Assumption \ref{assumption:triangles} and Lemma \ref{lemma:triangle-order}.
\end{proof}
\subsection{Extension to negative peer effects}
In order to extend this analysis to negative peer effects, we would need to ensure that certain quantities, such as $\|GS^{-1}\|_F$, are lower bounded in magnitude by $\Omega(\|G\|_F)$. When peer effects are positive, this is easy to show, since $G$ is entrywise non-negative and $\Tr((GS^{-1})^TGS^{-1})$ is lower bounded by just taking the $\Tr(G^TG)$ term. If peer effects are negative, one possible approach is to write
$$GS^{-1}=G+\rho G^2+\rho^2 G^3+...=G+\rho G^2S^{-1},$$
and use the reverse triangle inequality to bound
\begin{align}
    \|GS^{-1}\|_F-\|G\|_F&\leq \|\rho G^2S^{-1}\|_F\\
    &\leq |\rho| \|G\|_F\|GS^{-1}\|_2\leq |\rho|\|G\|_F\|G\|_2\|S^{-1}\|_2.
\end{align}
Using sub-additivity and sub-multiplicativity of the spectral norm, it is possible to use the series expansion of $S^{-1}$ to argue
$$\|S^{-1}\|_2\leq \frac{1}{1-|\rho| \|G\|_2},$$
valid when $|\rho|\|G\|_2< 1$. Thus we have the bound
$$\|GS^{-1}\|_F-\|G\|_F\leq \frac{|\rho|\|G\|_2}{1-|\rho|\|G\|_2}\|G\|_F.$$
To argue that $\|GS^{-1}\|_F$ and $\|G\|_F$ have the same order, it is sufficient to have
$$\frac{|\rho|\|G\|_2}{1-|\rho|\|G\|_2}\leq 1-\varepsilon $$
for any $\varepsilon >0$. Rearranging, it is sufficient for
$$|\rho|\|G\|_2\leq \frac{1-\varepsilon}{2-\varepsilon}.$$
For regular graphs, $\|G\|_2= 1$, so it is sufficient that $\rho>-\frac{1}{2}$. However, in general, $\|G\|_2$ can grow with the column sums of $G$; while we assume these column sums are uniformly bounded, the condition from this proof technique would become stronger and restrict the possible set of $\rho$ for our results. We leave it to future work to determine weaker conditions that allow these results to be extended to negative values of $\rho$.
\end{appendix}



\begin{thebibliography}{46}
\providecommand{\natexlab}[1]{#1}
\providecommand{\url}[1]{\texttt{#1}}
\expandafter\ifx\csname urlstyle\endcsname\relax
  \providecommand{\doi}[1]{doi: #1}\else
  \providecommand{\doi}{doi: \begingroup \urlstyle{rm}\Url}\fi

\bibitem[Abadie et~al.(2020)Abadie, Athey, Imbens, and Wooldridge]{abadie2020sampling}
A.~Abadie, S.~Athey, G.~W. Imbens, and J.~M. Wooldridge.
\newblock Sampling-based versus design-based uncertainty in regression analysis.
\newblock \emph{Econometrica}, 88\penalty0 (1):\penalty0 265--296, 2020.

\bibitem[Acemoglu et~al.(2012)Acemoglu, Carvalho, Ozdaglar, and Tahbaz-Salehi]{acemoglu2012network}
D.~Acemoglu, V.~M. Carvalho, A.~Ozdaglar, and A.~Tahbaz-Salehi.
\newblock The network origins of aggregate fluctuations.
\newblock \emph{Econometrica}, 80\penalty0 (5):\penalty0 1977--2016, 2012.

\bibitem[Andrews et~al.(2019)Andrews, Stock, and Sun]{andrews2019weak}
I.~Andrews, J.~H. Stock, and L.~Sun.
\newblock Weak instruments in instrumental variables regression: Theory and practice.
\newblock \emph{Annual Review of Economics}, 11\penalty0 (1):\penalty0 727--753, 2019.

\bibitem[Anselin(2022)]{anselin2022spatial}
L.~Anselin.
\newblock Spatial econometrics.
\newblock \emph{Handbook of spatial analysis in the social sciences}, pages 101--122, 2022.

\bibitem[Antoine and Renault(2009)]{antoine2009efficient}
B.~Antoine and E.~Renault.
\newblock Efficient gmm with nearly-weak instruments.
\newblock \emph{The Econometrics Journal}, 12:\penalty0 S135--S171, 2009.

\bibitem[Aral et~al.(2012)Aral, Brynjolfsson, and Van~Alstyne]{aral2012information}
S.~Aral, E.~Brynjolfsson, and M.~Van~Alstyne.
\newblock Information, technology, and information worker productivity.
\newblock \emph{Information Systems Research}, 23\penalty0 (3-part-2):\penalty0 849--867, 2012.

\bibitem[Auerbach(2022)]{auerbach2022identification}
E.~Auerbach.
\newblock Identification and estimation of a partially linear regression model using network data.
\newblock \emph{Econometrica}, 90\penalty0 (1):\penalty0 347--365, 2022.

\bibitem[Avella-Medina et~al.(2018)Avella-Medina, Parise, Schaub, and Segarra]{avella2018centrality}
M.~Avella-Medina, F.~Parise, M.~T. Schaub, and S.~Segarra.
\newblock Centrality measures for graphons: Accounting for uncertainty in networks.
\newblock \emph{IEEE Transactions on Network Science and Engineering}, 7\penalty0 (1):\penalty0 520--537, 2018.

\bibitem[Ballester et~al.(2006)Ballester, Calv{\'o}-Armengol, and Zenou]{ballester2006s}
C.~Ballester, A.~Calv{\'o}-Armengol, and Y.~Zenou.
\newblock Who's who in networks. wanted: The key player.
\newblock \emph{Econometrica}, 74\penalty0 (5):\penalty0 1403--1417, 2006.

\bibitem[Banerjee et~al.(2013)Banerjee, Chandrasekhar, Duflo, and Jackson]{banerjee2013diffusion}
A.~Banerjee, A.~G. Chandrasekhar, E.~Duflo, and M.~O. Jackson.
\newblock The diffusion of microfinance.
\newblock \emph{Science}, 341\penalty0 (6144):\penalty0 1236498, 2013.

\bibitem[Basu(2023)]{basu2023yule}
D.~Basu.
\newblock The yule-frisch-waugh-lovell theorem for linear instrumental variables estimation.
\newblock \emph{arXiv preprint arXiv:2307.12731}, 2023.

\bibitem[Bickel et~al.(2011)Bickel, Chen, and Levina]{bickel2011method}
P.~J. Bickel, A.~Chen, and E.~Levina.
\newblock The method of moments and degree distributions for network models.
\newblock \emph{Annals of Statistics}, 39\penalty0 (5):\penalty0 2280--2301, 2011.

\bibitem[Bound et~al.(1995)Bound, Jaeger, and Baker]{bound1995problems}
J.~Bound, D.~A. Jaeger, and R.~M. Baker.
\newblock Problems with instrumental variables estimation when the correlation between the instruments and the endogenous explanatory variable is weak.
\newblock \emph{Journal of the American statistical association}, 90\penalty0 (430):\penalty0 443--450, 1995.

\bibitem[Bramoull{\'e} et~al.(2009)Bramoull{\'e}, Djebbari, and Fortin]{bramoulle2009identification}
Y.~Bramoull{\'e}, H.~Djebbari, and B.~Fortin.
\newblock Identification of peer effects through social networks.
\newblock \emph{Journal of econometrics}, 150\penalty0 (1):\penalty0 41--55, 2009.

\bibitem[Bramoull{\'e} et~al.(2020)Bramoull{\'e}, Djebbari, and Fortin]{bramoulle2020peer}
Y.~Bramoull{\'e}, H.~Djebbari, and B.~Fortin.
\newblock Peer effects in networks: A survey.
\newblock \emph{Annual Review of Economics}, 12\penalty0 (1):\penalty0 603--629, 2020.

\bibitem[Buja et~al.(2019)Buja, Brown, Berk, George, Pitkin, Traskin, Zhang, and Zhao]{buja2019models}
A.~Buja, L.~Brown, R.~Berk, E.~George, E.~Pitkin, M.~Traskin, K.~Zhang, and L.~Zhao.
\newblock Models as approximations i.
\newblock \emph{Statistical Science}, 34\penalty0 (4):\penalty0 523--544, 2019.

\bibitem[Cai et~al.(2015)Cai, Janvry, and Sadoulet]{cai2015social}
J.~Cai, A.~D. Janvry, and E.~Sadoulet.
\newblock Social networks and the decision to insure.
\newblock \emph{American Economic Journal: Applied Economics}, 7\penalty0 (2):\penalty0 81--108, 2015.

\bibitem[Chandrasekhar and Lewis(2011)]{chandrasekhar2011econometrics}
A.~Chandrasekhar and R.~Lewis.
\newblock Econometrics of sampled networks.
\newblock \emph{Unpublished manuscript, MIT.[422]}, 2:\penalty0 7, 2011.

\bibitem[Chin(2019)]{chin2019regression}
A.~Chin.
\newblock Regression adjustments for estimating the global treatment effect in experiments with interference.
\newblock \emph{Journal of Causal Inference}, 7\penalty0 (2):\penalty0 20180026, 2019.

\bibitem[Ding(2024)]{ding2024linear}
P.~Ding.
\newblock Linear model and extensions.
\newblock \emph{arXiv preprint arXiv:2401.00649}, 2024.

\bibitem[Frieze and Karo{\'n}ski(2015)]{frieze2015introduction}
A.~Frieze and M.~Karo{\'n}ski.
\newblock \emph{Introduction to random graphs}.
\newblock Cambridge University Press, 2015.

\bibitem[Giles(1984)]{giles1984instrumental}
D.~E. Giles.
\newblock Instrumental variables regressions involving seasonal data.
\newblock \emph{Economics Letters}, 14\penalty0 (4):\penalty0 339--343, 1984.

\bibitem[Hahn and Kuersteiner(2002)]{hahn2002discontinuities}
J.~Hahn and G.~Kuersteiner.
\newblock Discontinuities of weak instrument limiting distributions.
\newblock \emph{Economics Letters}, 75\penalty0 (3):\penalty0 325--331, 2002.

\bibitem[Halpern et~al.(2021)Halpern, Halpern, Jadbabaie, Mossel, Procaccia, and Revel]{halpern2021defense}
D.~Halpern, J.~Y. Halpern, A.~Jadbabaie, E.~Mossel, A.~D. Procaccia, and M.~Revel.
\newblock In defense of fluid democracy.
\newblock \emph{arXiv preprint arXiv:2107.11868}, 2021.

\bibitem[Hayes and Levin(2024)]{hayes2024peer}
A.~Hayes and K.~Levin.
\newblock Peer effects in the linear-in-means model may be inestimable even when identified.
\newblock \emph{arXiv preprint arXiv:2410.10772}, 2024.

\bibitem[Horn and Johnson(2012)]{horn2012matrix}
R.~A. Horn and C.~R. Johnson.
\newblock \emph{Matrix analysis}.
\newblock Cambridge university press, 2012.

\bibitem[Ke and Wang(2024)]{ke2024optimal}
Z.~T. Ke and J.~Wang.
\newblock Optimal network membership estimation under severe degree heterogeneity.
\newblock \emph{Journal of the American Statistical Association}, pages 1--15, 2024.

\bibitem[Kelejian and Prucha(1998)]{kelejian1998generalized}
H.~H. Kelejian and I.~R. Prucha.
\newblock A generalized spatial two-stage least squares procedure for estimating a spatial autoregressive model with autoregressive disturbances.
\newblock \emph{The journal of real estate finance and economics}, 17:\penalty0 99--121, 1998.

\bibitem[Knight(2008)]{knight2008shrinkage}
K.~Knight.
\newblock Shrinkage estimation for nearly singular designs.
\newblock \emph{Econometric Theory}, 24\penalty0 (2):\penalty0 323--337, 2008.

\bibitem[Lee and Wilkinson(2019)]{lee2019review}
C.~Lee and D.~J. Wilkinson.
\newblock A review of stochastic block models and extensions for graph clustering.
\newblock \emph{Applied Network Science}, 4\penalty0 (1):\penalty0 1--50, 2019.

\bibitem[Lee(2002)]{lee2002consistency}
L.-F. Lee.
\newblock Consistency and efficiency of least squares estimation for mixed regressive, spatial autoregressive models.
\newblock \emph{Econometric theory}, 18\penalty0 (2):\penalty0 252--277, 2002.

\bibitem[Lee(2003)]{lee2003best}
L.-f. Lee.
\newblock Best spatial two-stage least squares estimators for a spatial autoregressive model with autoregressive disturbances.
\newblock \emph{Econometric Reviews}, 22\penalty0 (4):\penalty0 307--335, 2003.

\bibitem[Lee(2004)]{lee2004asymptotic}
L.-F. Lee.
\newblock Asymptotic distributions of quasi-maximum likelihood estimators for spatial autoregressive models.
\newblock \emph{Econometrica}, 72\penalty0 (6):\penalty0 1899--1925, 2004.

\bibitem[Lee(2007)]{lee2007method}
L.-f. Lee.
\newblock The method of elimination and substitution in the gmm estimation of mixed regressive, spatial autoregressive models.
\newblock \emph{Journal of Econometrics}, 140\penalty0 (1):\penalty0 155--189, 2007.

\bibitem[Lee et~al.(2023)Lee, Yang, and Yu]{lee2023qml}
L.-F. Lee, C.~Yang, and J.~Yu.
\newblock Qml and efficient gmm estimation of spatial autoregressive models with dominant (popular) units.
\newblock \emph{Journal of Business \& Economic Statistics}, 41\penalty0 (2):\penalty0 550--562, 2023.

\bibitem[Liu et~al.(2014)Liu, Patacchini, and Zenou]{liu2014endogenous}
X.~Liu, E.~Patacchini, and Y.~Zenou.
\newblock Endogenous peer effects: local aggregate or local average?
\newblock \emph{Journal of economic behavior \& organization}, 103:\penalty0 39--59, 2014.

\bibitem[Manski(1993)]{manski1993identification}
C.~F. Manski.
\newblock Identification of endogenous social effects: The reflection problem.
\newblock \emph{The review of economic studies}, 60\penalty0 (3):\penalty0 531--542, 1993.

\bibitem[Mikusheva(2010)]{mikusheva2010robust}
A.~Mikusheva.
\newblock Robust confidence sets in the presence of weak instruments.
\newblock \emph{Journal of Econometrics}, 157\penalty0 (2):\penalty0 236--247, 2010.

\bibitem[Phillips(2016)]{phillips2016inference}
P.~C. Phillips.
\newblock Inference in near-singular regression.
\newblock In \emph{Essays in Honor of Aman Ullah}, volume~36, pages 461--486. Emerald Group Publishing Limited, 2016.

\bibitem[Saari et~al.(2014)Saari, Kentala, and Mattila]{saari2014smoking}
A.~J. Saari, J.~Kentala, and K.~J. Mattila.
\newblock The smoking habit of a close friend or family member—how deep is the impact? a cross-sectional study.
\newblock \emph{BMJ open}, 4\penalty0 (2):\penalty0 e003218, 2014.

\bibitem[Sacerdote(2011)]{sacerdote2011peer}
B.~Sacerdote.
\newblock Peer effects in education: How might they work, how big are they and how much do we know thus far?
\newblock In \emph{Handbook of the Economics of Education}, volume~3, pages 249--277. Elsevier, 2011.

\bibitem[Staiger and Stock(1994)]{staiger1994instrumental}
D.~O. Staiger and J.~H. Stock.
\newblock Instrumental variables regression with weak instruments, 1994.

\bibitem[Startz and Wood-Doughty(2017)]{startz2017improved}
R.~Startz and A.~Wood-Doughty.
\newblock \emph{Improved estimation of peer effects using network data}.
\newblock PhD thesis, Dissertation. Department of Economics, University of California, Santa Barbara, 2017.

\bibitem[Tchuente(2019)]{tchuente2019weak}
G.~Tchuente.
\newblock Weak identification and estimation of social interaction models.
\newblock \emph{arXiv preprint arXiv:1902.06143}, 2019.

\bibitem[Wasserman and Faust(1994)]{wasserman1994social}
S.~Wasserman and K.~Faust.
\newblock \emph{Social network analysis: Methods and applications}.
\newblock Cambridge university press, 1994.

\bibitem[Zhang and Zhu(2010)]{zhang2010more}
Z.~Zhang and P.~Zhu.
\newblock A more efficient best spatial three-stage least squares estimator for spatial autoregressive models.
\newblock \emph{Annals of Economics \& Finance}, 11\penalty0 (1), 2010.

\end{thebibliography}

\end{document}